\documentclass{article} 
\usepackage{amsthm,amsmath,amsfonts,amssymb, authblk, hyperref,cases,color}
\usepackage{vmargin}
\setmarginsrb{2cm}{1cm}{2cm}{3cm}{1cm}{1cm}{2cm}{2cm}
\usepackage{tikz}
\usepackage[capitalise]{cleveref}
\usepackage[T1]{fontenc}
\usepackage[utf8]{inputenc}

\everymath{\displaystyle}

\numberwithin{equation}{section}
\newtheorem{Theorem}{Theorem}[section]
\newtheorem{Lemma}[Theorem]{Lemma}
\newtheorem{Proposition}[Theorem]{Proposition}

\newtheorem{Remark}[Theorem]{Remark}
\newtheorem{Notation}[Theorem]{Notation}

\renewcommand{\leq}{\leqslant}

\renewcommand{\geq}{\geqslant}

\DeclareMathOperator{\supp}{supp}
\DeclareMathOperator{\dist}{dist}
\DeclareMathOperator{\Span}{span}

\renewcommand{\div}{\operatorname{div}}
\renewcommand{\Re}{\operatorname{Re}}
\renewcommand{\Im}{\operatorname{Im}}



\begin{document}                  
\title{Controllability of a simplified fluid-structure interaction system}
\author[1]{R\'emi Buffe}
\author[1]{Tak\'eo Takahashi}
\affil[1]{Universit\'e de Lorraine, CNRS, Inria, IECL, F-54000 Nancy, France}
\date{\today}

\maketitle
	
\tableofcontents

\begin{abstract}
We are interested by the controllability of a fluid-structure interaction system where the fluid is viscous and incompressible and where the structure is elastic and located on a part of the boundary of the fluid's domain. In this article, we simplify this system by considering a linearization and by replacing the wave/plate equation for the structure by a heat equation. We show that the corresponding system coupling the Stokes equations with a heat equation at its boundary is null-controllable.
The proof is based on Carleman estimates and interpolation inequalities. One of the Carleman estimates corresponds to the case of Ventcel boundary conditions.
This work can be seen as a first step to handle the real system where the structure is modeled by the wave or the plate equation.
\end{abstract}

\vspace{1cm}

\noindent {\bf Keywords:} Null controllability, Navier-Stokes systems, Carleman estimates

\noindent {\bf 2010 Mathematics Subject Classification.}  76D05, 35Q30, 93B05, 93B07, 93C10

\section{Introduction}
Fluid-structure interaction systems are important systems for many applications such as aerodynamics, medicine (for instance the study of the motion of the blood in veins or in arteries), biology (animal locomotion in a fluid), civil engineering (design of bridges), naval architecture (design of boats and submarines), etc. Moreover, their mathematical studies can be very challenging due to several difficulties: in particular, the complexity of fluid equations such as the Navier-Stokes system, the strong coupling between the fluid system and the structure system and the free-boundary corresponding to the structure displacement. 

In this article we consider a simplified fluid-structure interaction system. 
The corresponding system without simplification has been proposed in  \cite{QuaTuvVen2000a} as a 
model for the blood flow in a vessel. It writes as follows: we denote by $\mathcal{I}$ the torus (in order to consider periodic boundary conditions):
$$
\mathcal{I}:=\mathbb{R}/(2\pi\mathbb{Z}),
$$
and for any deformation $\ell : \mathcal{I} \to (-1,\infty)$, we consider the corresponding fluid domain
\begin{equation}\label{gev0.0}
\Omega_{\ell}=  \left \{(x_1,x_2)\in \mathcal{I}\times \mathbb{R} \ ; \ x_2\in (0,1+\ell(x_1))\right \}.
\end{equation}
Then the equations of motion are
\begin{equation}\label{tak2.3}
\left\{\begin{array}{rl}
\partial_t w +(w\cdot \nabla) w-\div \mathbb{T}(w,\pi) = 0 & t>0, \  x\in \Omega_{\ell(t)},\\
\div w =   0 & t>0, \ x\in \Omega_{\ell(t)},\\
w(t,x_1,1+\ell(t,x_1)) = (\partial_t \ell)(t,x_1) e_2 & t>0, \ x_1\in \mathcal{I},\\
w = 0  & t>0, \ x\in \Gamma_{0},\\
\partial_{tt} \ell + \alpha_1 \partial_{x_1}^4 \ell-\alpha_2 \partial_{x_1}^2 \ell -\delta \partial_{t}\partial_{x_1}^2  \ell
  =-\widetilde{\mathbb{H}}_{\ell}(w,\pi) & t>0, \  x_1\in \mathcal{I},
\end{array}\right.
\end{equation}
where
$$
\Gamma_0=\mathcal{I}\times \{0\}, \quad 
\Gamma_1=\mathcal{I}\times \{1\}.
$$
In the above system, we have used the following notations: $(e_1, e_2)$ is the canonical basis of $\mathbb{R}^2$ and
\begin{equation}\label{ws4.6}
\mathbb{T}(w,\pi) = 2 D(w)- \pi I_2,\quad D(w)= \frac{1}{2}\left(\nabla w + (\nabla w)^*\right),
\end{equation}
\begin{equation}\label{ws4.7}
\widetilde{\mathbb{H}}_{\ell}(w,\pi)(t,x_1)= \left[ (1+|\partial_{x_1} \ell|^2)^{1/2} \left[\mathbb{T}(w,\pi)n\right](t,x_1,1+\ell(t,x_1))\cdot e_2 \right].
\end{equation}
The two first lines of \eqref{tak2.3} correspond to the Navier-Stokes system for the fluid velocity $w$ and the pressure $\pi$. The last line of \eqref{tak2.3} is a beam equation satisfied by the deformation $\ell$. We have used the standard no-slip boundary conditions (third and forth equations). To simplify, we assume that the viscosity of the fluid is constant and equal to 1. The vector fields $n$ corresponds to the unit exterior normal to $\Omega_{\ell(t)}$.

This system has been studied by many authors: \cite{ChaDesEst2005a} (existence of weak solutions), \cite{Bei2004a}, \cite{Leq2011a}, 
\cite{MR3466847} and \cite{Debayan} (existence of strong solutions), \cite{RaymondSICON2010} (stabilization of strong solutions), \cite{MR3619065} (stabilization of weak solutions around a stationary state). There are also some works in the case $\delta=0$, that is  without damping on the beam equation: the existence of weak solutions is proved in \cite{Gra2008a} and in \cite{MuhaCanic} (see also \cite{CanicMuhaBukac}). In \cite{grandmont:hal-01567661}, 
the existence of local strong solutions is obtained for a structure described by either a wave equation ($\alpha_1=\delta=0$ and $\alpha_2>0$) 
or a beam equation with inertia of rotation ($\alpha_1>0$, $\alpha_2=\delta=0$ and with an additional term $-\partial_{ttss}\ell$). 
In \cite{plat} and \cite{nonplat}, the authors show the existence and uniqueness of strong solutions in the case  $\alpha_1>0$, $\alpha_2\geq 0$ and $\delta=0$.
Using similar techniques they also analyze the case of the wave equation ($\alpha_1=\delta=0$ and $\alpha_2>0$) in \cite{wave} showing in particular that the semigroup of the linearized system is analytic.  
Let us mention also some results for more complex models: \cite{LR14, Leng14} (linear elastic Koiter shell),  \cite{MC-arma} (dynamic pressure boundary conditions), \cite{Boris-C-13, Boris-C-15} (3D cylindrical domain with nonlinear elastic cylindrical Koiter shell), 
\cite{MR4042350} and \cite{MR4192391}  (nonlinear elastic and thermoelastic plate equations),  \cite{MR4253566}, \cite{MR4183914} (compressible fluids), etc.

The advantage of the damping in the beam equation is that the term $-\delta \partial_{t}\partial_{x_1}^2  \ell$ is a structural damping so that the corresponding beam equation becomes a parabolic equation 
(see, for instance, \cite{CheTri1989a}). In this work, we consider a simplified model associated with \eqref{tak2.3}. We neglect the deformation of the fluid domain due to the elastic déformation and we also linearized the Navier-Stokes system by only considering the Stokes system. Moreover, we replace the damped beam equation by a heat equation. By setting 
$$
\Omega=\mathcal{I}\times (0,1)
$$
(see Figure \ref{F1}), we are thus considering the following system
\begin{equation}\label{ns0.1}
\left\{\begin{array}{rl}
\partial_{t} w - \Delta w + \nabla \pi = 1_{\omega} f & \text{in} \ (0,T)\times \Omega,\\
\div w=0& \text{in} \ (0,T)\times \Omega,\\
w=0 & \text{on} \ (0,T)\times \Gamma_0,\\
w=\zeta e_2 & \text{on} \ (0,T)\times \Gamma_1,\\
\partial_t \zeta -\partial_{x_1x_1} \zeta = -\mathbb{T}(w,\pi)n\cdot e_2 & \text{in}\  (0,T)\times \mathcal{I},\\
w(0,\cdot)=w^{0} \quad \text{in} \ \Omega, & \zeta(t,0)=\zeta^0 \quad \text{in} \ \mathcal{I}.
\end{array}\right.
\end{equation}
In the above system, $\zeta$ corresponds to the displacement velocity $\partial_t\ell$ in \eqref{tak2.3} and we do not consider anymore the displacement position $\ell$. We have added a control $f$ localized in the fluid domain, in an arbitrary small nonempty open set $\omega$ of $\Omega$. Our goal is to show the null-controllability of this simplified system and to do this, as it is standard (see, for instance, \cite[Theorem 11.2.1, p.357]{TucsnakWeiss}), we prove an observability inequality on the adjoint system:
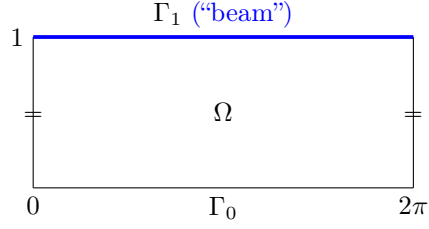
\begin{figure}\label{F1}
\begin{center}
\begin{tikzpicture}
\draw (0,0) -- (5,0);
\draw (0,2) -- (0,0);
\draw (5,0) -- (5,2);
\draw[line width=0.5mm,  blue] (0,2) -- (5,2);
\draw (2.5,1) node {$\Omega$};
\draw (2.5,0) node[below] {$\Gamma_{0}$};
\draw (0,1) node {$-$};
\draw (0,0.95) node {$-$};
\draw (5,1) node {$-$};
\draw (5,0.95) node {$-$};
\draw (2.5,2) node[above] {$\Gamma_{1}$ {\color{blue}(``beam'')}};
\draw (0,0) node[below] {$0$};
\draw (5,0) node[below] {$2\pi$};
\draw (0,2) node[left] {$1$};
\end{tikzpicture}
\end{center}
\caption{Our geometry}
\end{figure}
\begin{equation}\label{ns0.2}
\left\{\begin{array}{rl}
\partial_{t} u - \Delta u + \nabla p = 0& \text{in} \ (0,T)\times \Omega,\\
\div u=0& \text{in} \ (0,T)\times \Omega,\\
u=0 & \text{on} \ (0,T)\times\Gamma_0,\\
u=\eta e_2 & \text{on} \ (0,T)\times \Gamma_1,\\
\partial_t \eta -\partial_{x_1x_1} \eta = -\mathbb{T}(u,p)n_{|\Gamma_1}\cdot e_2 & \text{in}\  (0,T)\times \mathcal{I},\\
u(0,\cdot)=u^0 \quad \text{in} \ \Omega, & \eta(0,\cdot)=\eta^0 \quad \text{in} \ \mathcal{I}.
\end{array}\right.
\end{equation}
We set
$$
L^2_0(\mathcal{I}):=\left\{ f\in L^2(\mathcal{I}) \ ; \ \int_0^{2\pi} f(x_1)\ dx_1=0\right\}
$$
and we define the space
\begin{equation}\label{gev7.4}
\mathcal{H} :=  \left\{\left[u,\eta \right]\in {L}^2(\Omega)\times L^2_0(\mathcal{I})
\ ;\ u_2=0 \ \text{on} \ \Gamma_0, \quad 
u_2=\eta  \ \text{on} \ \Gamma_1,\ \div u =0 \; \text{in} \ \Omega \right\}.
\end{equation}
Then, our main result states as follows:
\begin{Theorem}\label{Tmain}
Let $\gamma>1$ and let $\omega$ be a nonempty open set of $\Omega$. 
Then, there exists $C_0>0$ such that for any $T\in (0,1)$ and for any $\left[u^0,\eta^0\right]\in \mathcal{H}$,
the solution $\left[u,\eta \right]$ of \eqref{ns0.2} satisfies 
$$
\left\|\left[u(T,\cdot),\eta(T,\cdot) \right]\right\|_{\mathcal{H}}^2 \leq 
C_0 \exp\left(\frac{C_0}{T^\gamma}\right)
\iint_{(0,T)\times \omega} |u|^2 \ dt dx.
$$
In particular, for any $\left[w^0,\zeta^0\right]\in \mathcal{H}$ and for any $T>0$, there exists a control $f\in L^2((0,T)\times \omega)$ such that the solution 
$\left[w,\zeta \right]$ of \eqref{ns0.1} satisfies 
$$
w(T,\cdot)=0, \quad \zeta(T,\cdot)=0.
$$
\end{Theorem}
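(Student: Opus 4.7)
The control statement in Theorem~\ref{Tmain} follows from the observability inequality for the adjoint system \eqref{ns0.2} by the standard duality between null-controllability and observability (precisely along the lines of the Tucsnak–Weiss reference cited above). I therefore focus on the observability estimate, which I plan to derive from two Carleman estimates that are combined and then transferred to an unweighted inequality by an energy/dissipation argument.

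The first Carleman estimate is for the Stokes block. I would choose a Fursikov–Imanuvilov weight $\varphi$ singular at $t=0,T$ and concentrated on a subdomain $\omega_0\Subset\omega$, and derive a weighted estimate for $(u,p)$ with non-homogeneous boundary data on $\Gamma_1$: the Dirichlet datum is $\eta$ (since $u_1=0$ and $u_2=\eta$) and the normal-stress contribution $\mathbb{T}(u,p)n\cdot e_2 = 2\partial_{x_2}u_2 - p$ appears through integration by parts. The pressure is handled as usual by taking the divergence of the momentum equation and using $-\Delta p = 0$ together with the incompressibility condition. The second estimate concerns the one-dimensional heat equation on $\Gamma_1$ satisfied by $\eta$; because $\eta$ also equals $u_2|_{\Gamma_1}$, this is a genuinely Ventcel-type boundary condition, and the corresponding Carleman estimate must include the boundary traces of $u$, $\partial_{x_2}u_2$ and $p$ as forcing terms. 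The weight on $\Gamma_1$ is taken to be the trace of the bulk weight $\varphi$ so that the coupling terms in the two inequalities match up with the same large parameter.

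Adding the two Carleman estimates and taking the parameter large enough, all boundary coupling terms are absorbed and one is left with a weighted inequality whose right-hand side involves only $\iint_{(0,T)\times\omega_0}|u|^2\,dt\,dx$. A standard energy argument, combined with the interpolation inequalities mentioned in the abstract to pass from a weighted integral between times $T/4$ and $3T/4$ to the unweighted $\mathcal{H}$-norm at time $T$, then produces the announced observability inequality with constant $C_0\exp(C_0/T^\gamma)$; the exponent $\gamma>1$ reflects the behaviour of the Fursikov–Imanuvilov weight near the endpoint. The main obstacle I anticipate is the pressure trace $p|_{\Gamma_1}$: it is not a true dynamical unknown on $\Gamma_1$ and must be recovered from the bulk via $\Delta p=0$, $\partial_\nu p$-type boundary data and trace/interpolation inequalities compatible with the Carleman weight. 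It is precisely this point that forces the passage through a Ventcel-type Carleman estimate rather than through two independent inequalities, and it is the technical heart of the proof.
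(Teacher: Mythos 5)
Your plan follows the classical \emph{global} parabolic Carleman route (Fursikov--Imanuvilov weight singular at $t=0,T$, a Stokes Carleman estimate plus a one-dimensional heat Carleman estimate on $\Gamma_1$ with a matched trace weight, absorption of coupling terms, then an energy step). This is precisely the route the paper \emph{avoids}: as the authors note in the remark following the theorem, they instead use the Lebeau--Robbiano strategy. Concretely, the paper exploits that the generator $A_0$ is self-adjoint with compact resolvent (\cref{AStrongContSG}), proves a spectral inequality (\cref{SpectralInequality}) for sums of eigenfunctions by passing to the augmented elliptic problem in $(s,x)$ via $\cosh(\sqrt{\lambda_j}\,s)$, and then derives observability from the spectral inequality by an abstract argument (\cref{T2}) that, for self-adjoint operators, needs no Weyl-type information about the eigenvalue counting function. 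The spectral inequality itself is obtained by patching together a local interior interpolation estimate for $U_1$ (Dirichlet), the Ventcel interpolation estimate for $U_2$ (\cref{Carlemansurv2}, coming from \cite{Buffe17}), and --- the real novelty --- a dedicated global observability estimate for the pressure (\cref{Estimateonpressureterms}).

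The gap in your proposal is exactly the point you yourself flag as ``the technical heart'': you give no actual mechanism to control the pressure trace $p|_{\Gamma_1}$, which here acts as a source term in the boundary heat equation. In the global Carleman framework the pressure is normally only determined up to a time-dependent constant and one cleverly eliminates it or observes $\nabla p$; but here the constant is fixed by \eqref{meanpressure} and the \emph{value} of $p|_{\Gamma_1}$ enters a dynamical equation. Merely invoking $-\Delta p=0$ and ``trace/interpolation inequalities compatible with the Carleman weight'' is not a resolution; it is the problem restated. The paper resolves this by working with the augmented elliptic system, conjugating the harmonic pressure equation and proving a Carleman estimate (\cref{Carlemanonpressureprop}) whose boundary term is only favourable for \emph{low} tangential Fourier modes; the \emph{high} tangential modes of $p|_{\partial\Omega}$ are then estimated separately by elliptic a priori bounds on the Fourier-localized Stokes system together with a Ne\v{c}as inequality (\cref{PropositionHF}). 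This low/high splitting of the boundary trace of the pressure is the crucial idea missing from your outline, and I do not see how it transfers to the parabolic (time-weighted) setting you propose, where the corresponding Fourier decomposition in $x_1$ would have to be performed uniformly in time under a degenerate F--I weight. Unless you can supply a substitute for \cref{PropositionHF} in the parabolic setting, your route stalls precisely at the step you identified.
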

\begin{Remark}
As explained above, \cref{Tmain} can be seen as a first control result on a simplified model. We expect to extend some of the tools developed here to handle the control properties of the system \eqref{tak2.3} in future works. The controllability properties of fluid-structure interaction systems have been tackled mainly in the case where the structure is a rigid body (see, \cite{MR4101928}, \cite{MR2139944}, \cite{MR3023058}, \cite{MR2317341}, \cite{MR2375750}, \cite{MR3085093}, \cite{MR4043319}, \cite{djebour:hal-02454567}, \cite{MR3365831}, etc.). In \cite{MR4222171}, the author shows an observability inequality for the adjoint of a linearized and simplified fluid-structure interaction system in the case of a compressible viscous fluid and of a damped beam. Note that the corresponding control problem involves two controls, one for the fluid and one for the structure.

For the stabilization of fluid-structure interaction systems, one can quote some results: 
\cite{MR2745779}, \cite{MR3619065} (for the case of a damped beam), \cite{MR3397328}, \cite{MR4238201}, \cite{MR3181675}, \cite{MR3261920} (for the case of a rigid body).
\end{Remark}

\begin{Remark}
The method proposed here to control a system involving the Stokes equations is quite different from  the method used in a large part of the literature for the controllability of fluid systems. In general, the method is based on ``global Carleman inequalities'' (see, for instance \cite{fernandez2004local, imanuvilov1998}). Here, we follow another strategy as in \cite{LebeauRobbiano,ChavesLebeau}. Such a method is based on local Carleman inequalities for an ''augmented'' elliptic operator, from which one deduces a spectral inequality, in the spirit of \cite{LebeauRobbiano,JerisonLebeau}. However, as it is pointed out in \cite{ChavesLebeau}, unique continuation property does not hold for the augmented operator in the direction of the additional variable, due to the pressure. We then use an adaptation of the original strategy of \cite{LebeauRobbiano,Leautaud} in our context. This type of spectral inequality has already been used in the context of fluids in \cite{BuffeGagnon}. We also recall that one can use \cref{Tmain} to handle nonlinear controllability issues by applying the general method proposed in \cite{MR3023058}.
\end{Remark}

\begin{Remark}
Using the particular geometry considered here, we can simplify the adjoint system.
First on $\Gamma_1$, $n=e_2$ and using \eqref{ws4.6}, we deduce
\begin{equation}\label{ns0.3}
-\mathbb{T}(u,p)n\cdot e_2= -2\partial_{x_2} u_2 +p=2\partial_{x_1} u_1 +p=p,
\end{equation}
since $u_1(x_1,1)=0$ for $x_1\in \mathcal{I}$.

Moreover, using the incompressibility of the fluid and the boundary conditions, we deduce that
$$
0=\int_\Omega \div u \ dx =\int_{\mathcal{I}} \eta \ dx_1.
$$
Using this condition on the heat equation on the boundary and \eqref{ns0.3} yields
\begin{equation}\label{meanpressure}
\int_{\mathcal{I}}  p(x_1,1)\ dx_1=0.
\end{equation}
In particular, in contrast with the standard Stokes system, the pressure is not determined up to a constant. 
\end{Remark}

The outline of the article is as follows: in \cref{Sec_gen}, we show how to obtain the observability inequality stated in \cref{Tmain} from a spectral inequality. Such a result is quite standard, but here we show that for a self-adjoint operator, we do not need the usual assumption that is made on the eigenvalues of the main operator. 
Then using this general result, we are reduced to show a spectral inequality that we state in \cref{Sec_fun} along with the functional framework. The spectral inequality is itself the consequence of an interpolation inequality that we obtain in \cref{Sec_spectral}. One of the main difficulties to obtain such an inequality comes from the fact that we need to estimate the pressure. \cref{Sec_pressure} is devoted to such an estimate which is one of the main parts of this article. The proof of the spectral inequality and thus of \cref{Tmain} is obtained at the end of \cref{Sec_spectral}. In the appendix, we show an interpolation estimate for the Ventcel boundary condition that is mainly a consequence of a Carleman estimate obtained in \cite{Buffe17}.

\begin{Notation}
In the whole paper, we use $C$ as a generic positive constant that does not depend on the other terms of the inequality.
The value of the constant $C$ may change from one appearance to another.
We also use the notation $X\lesssim Y$ if there exists a constant $C>0$ such that we have the inequality $X\leq CY$. 
\end{Notation}

\section{From a spectral inequality to the null-controllability}\label{Sec_gen}
This section is devoted to a ``classical'' result showing that a spectral inequality implies the final-state observability and thus the null-controllability. 
The proof follows closely the proof in \cite{Leautaud, LebeauRobbiano} and we only prove it here for sake of completeness and also to show that we do not need 
any assumption on the asymptotic behavior of the spectrum of the operator (which is used in the above references).

More precisely, we assume here that $A : \mathcal{D}(A)\to \mathcal{H}$ is a positive self-adjoint operator with compact resolvents in an Hilbert space $\mathcal{H}$. We denote by $(\lambda_j)$ the nondecreasing sequence of eigenvalues and by $(w_j)$ an orthonormal basis of $\mathcal{H}$ composed by eigenvectors of $A$:
$A w_j =\lambda_j w_j$ for $j\geq 1$.
We also consider a control operator $B\in \mathcal{L}(\mathcal{U},\mathcal{H}).$


\begin{Theorem}\label{T2}
Assume the above hypotheses. Assume moreover
the existence of $S_0>0$, $C>0$ and $\kappa\in C^{\infty}_0(0,S_0)$ such that
that for any $\Lambda>0$, and for any $(a_{j})_{j}\in \mathbb C^{\mathbb N}$,
\begin{equation}\label{abs1.0}
\sum_{\lambda_{j}\leq \Lambda} \left| a_j \right|^2
\leq 
Ce^{C\sqrt{\Lambda}} \int_{0}^{S_{0}} \kappa^2(s)
\left\| \sum_{\lambda_{j}\leq \Lambda} a_j \cosh(s\sqrt\lambda_{j})
B^* w^{(j) }
 \right\|_{\mathcal{U}}^{2} \ ds.
\end{equation}
Then for all $\gamma>1$, there exists $C_0>0$ such that for any $T\in (0,1)$ and for any $z^0\in \mathcal{H}$,
\begin{equation}\label{abs4.7}
\left\|e^{-TA}z^0\right\|_{\mathcal{H}}^2 \leq 
C_0 \exp\left(\frac{C_0}{T^\gamma}\right)
\int_{0}^{T} \left\| B^* e^{-tA}z^0 \right\|^2_{\mathcal{U}} \ dt.
\end{equation}
\end{Theorem}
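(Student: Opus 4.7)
The plan is to follow the Lebeau-Robbiano method in the form used in \cite{Leautaud,LebeauRobbiano}, adapted to the elliptic/$\cosh$-type spectral inequality \eqref{abs1.0}. The proof splits into two steps: (a) deriving a low-frequency observability from \eqref{abs1.0}, and (b) a dyadic iteration over frequency cut-offs paired with a shrinking time partition.

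For step (a), I would aim at an estimate of the form
\begin{equation*}
\|\Pi_\Lambda e^{-\tau A}z^0\|_{\mathcal{H}}^2 \le C e^{C\sqrt{\Lambda}}\int_0^{\tau}\|B^* e^{-tA}z^0\|_{\mathcal{U}}^2\,dt,
\end{equation*}
valid for every $\Lambda>0$ and every small $\tau>0$, where $\Pi_\Lambda$ is the spectral projector onto $\{\lambda_j\le\Lambda\}$. Expanding $z^0=\sum\alpha_j w_j$ and applying \eqref{abs1.0} with $a_j=\alpha_j e^{-\tau\lambda_j}$ reduces the task to bounding the integral $\int_0^{S_0}\kappa^2(s)\|B^* \cosh(s\sqrt{A})e^{-\tau A}\Pi_\Lambda z^0\|_{\mathcal{U}}^2\,ds$ by the heat-observation integral on the right. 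The function $\zeta \mapsto B^* e^{-\zeta A}\Pi_\Lambda z^0$ is entire in $\zeta\in\mathbb{C}$ with norm at most $e^{\max(0,-\Re\zeta)\Lambda}\|B^*\|\|\Pi_\Lambda z^0\|_{\mathcal{H}}$ on the low-mode range. Combining the subordination identity $e^{-s\sqrt{A}}=\frac{s}{2\sqrt{\pi}}\int_0^\infty t^{-3/2}e^{-s^2/(4t)}e^{-tA}\,dt$ to handle the decaying half of $\cosh(s\sqrt{A})=\frac{1}{2}(e^{s\sqrt{A}}+e^{-s\sqrt{A}})$, together with a Hadamard three-lines / analytic-continuation argument for the growing half, yields the announced estimate; the extra factor $e^{C\sqrt{\Lambda}}$ is then absorbed into the one already present in \eqref{abs1.0}.

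Step (b) is the classical dyadic iteration: set $\Lambda_k=2^k$ and choose a partition $0=\sigma_0<\sigma_1<\cdots<T$ with $\sigma_{k+1}-\sigma_k\simeq T/(1+k)^{\gamma}$. Applying step (a) on each subinterval $[\sigma_k,\sigma_{k+1}]$ to the data $e^{-\sigma_k A}z^0$ at level $\Lambda_k$ controls $\|\Pi_{\Lambda_k}e^{-\sigma_{k+1}A}z^0\|_{\mathcal{H}}^2$ with a loss $e^{C\sqrt{\Lambda_k}}$, while the high-frequency remainder contracts by the exact factor $\|(I-\Pi_{\Lambda_k})e^{-(\sigma_{k+1}-\sigma_k)A}\|\le e^{-(\sigma_{k+1}-\sigma_k)\Lambda_k}$. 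This last bound holds for any positive self-adjoint $A$ with no assumption on the density of $(\lambda_j)$, which is precisely what allows one to bypass the Weyl-type hypothesis of \cite{Leautaud,LebeauRobbiano}. Telescoping and summing the resulting geometric series gives the announced constant $\exp(C_0/T^{\gamma})$ for any $\gamma>1$.

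The main obstacle lies in step (a): one must quantitatively identify the $\cosh$-weighted integral on the right-hand side of \eqref{abs1.0} with a genuine parabolic observation integral, without paying more than the $e^{C\sqrt{\Lambda}}$ factor already present. The dyadic iteration of step (b) is mostly a matter of carefully balancing exponents; the fact that no asymptotic spectral hypothesis is needed is a pleasant consequence of working with a self-adjoint $A$ whose spectral projectors commute with the semigroup.
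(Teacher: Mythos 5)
Your step (a) is the crux, and it is wrong as stated. You claim the spectral inequality \eqref{abs1.0} can be upgraded, for every small $\tau>0$, to
\[
\|\Pi_\Lambda e^{-\tau A}z^0\|_{\mathcal{H}}^2 \le C e^{C\sqrt{\Lambda}}\int_0^{\tau}\|B^*e^{-tA}z^0\|_{\mathcal{U}}^2\,dt,
\]
with a constant uniform in $\tau$. This is impossible even in the scalar case $\mathcal{H}=\mathbb{C}$, $A=\lambda_1>0$, $B=\mathrm{Id}$: as $\tau\to 0$ the right-hand side is $O(\tau)$ while the left-hand side tends to $|z^0|^2$. More structurally, the mechanisms you invoke cannot deliver this bound. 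The subordination formula expresses $e^{-s\sqrt{A}}e^{-\tau A}$ as an average of semigroup values $e^{-uA}$ for $u\ge \tau$, whereas you would need $u\le \tau$; and a Hadamard-type three-lines argument bounds an analytic function \emph{inside} a strip by its values on the \emph{boundary}, not the values at time $t\in(0,\tau)$ by those at complex times. No quantitative Phragm\'en--Lindelöf argument will convert the $\cosh$-weighted (elliptic) observation of \eqref{abs1.0} into a parabolic observation over $(0,\tau)$ without a $\tau$-dependent loss. This is exactly the reason the paper takes the detour through a genuine control construction: it forms the Gramian $G_\Lambda$, constructs a Gevrey-regular control using the cutoff $e$ of \cite[Lemma A.1]{Leautaud} (the Russell trick, Paley--Wiener), and only then dualizes. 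The resulting estimate \eqref{abs4.0} carries the unavoidable extra factor $\exp\!\bigl(C_1(\tau^{-\gamma}+\sqrt{\Lambda}+(\Lambda\tau)^{\gamma/(2\gamma-1)})\bigr)$, which blows up as $\tau\to 0$.

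Once one has the \emph{correct}, $\tau$-dependent low-mode observability \eqref{abs4.0}, your step (b) is the right idea in spirit and close to what the paper does, but the exponent balancing is different because of those missing terms. The paper first chooses $\Lambda=(\varepsilon\tau)^{-(1+\gamma)}$ so that the three terms in the exponent scale at the same order $(\varepsilon\tau)^{-\gamma}$, absorbs the high-frequency tail using $\|(I-\Pi_\Lambda)e^{-uA}\|\le e^{-u\Lambda}$, and then iterates on the dyadic intervals $(T/2^{k+1},T/2^k)$ \`a la Miller, telescoping via the monotonicity of $\rho(\tau)=\tfrac{1}{2C_1}\exp(-3C_1(\varepsilon\tau)^{-\gamma})$. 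Your suggested partition $\sigma_{k+1}-\sigma_k\simeq T/(1+k)^\gamma$ with $\Lambda_k=2^k$ does not perform this balancing and would not close with the actual constant from \eqref{abs4.0}. Your remark that no Weyl-type hypothesis is needed because the spectral projectors are orthogonal is true but only part of the explanation; the real reason in the paper is that, $A$ being self-adjoint, $\Pi_\Lambda$ commutes with $e^{-tA}$, $\cosh(s\sqrt{A})$ and with the Gramian construction, so one never needs to count eigenvalues or estimate norms of non-orthogonal projectors as in \cite{Leautaud}. In short: step (a) must be replaced by the moment-method control construction of the paper, and step (b) must then use the paper's specific choice of $\Lambda(\tau)$.
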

We recall that relation \eqref{abs4.7} implies the null-controllability of the system
\begin{equation}\label{abs4.9}
\left\{
\begin{array}{l}
\frac{d \theta}{dt} +A \theta=B g \quad \text{in} \ (0,T),
\\[2mm]
\theta(0)=\theta^0\in \mathcal{H}.
\end{array}
\right.
\end{equation}

\subsection{Controllability of the first modes}
We define
$$
\mathcal{H}_{\Lambda} = \Span \left\{ w_j, \ \lambda_j\leq \Lambda\right\},
\quad \Pi_{\Lambda} : \mathcal{H} \to \mathcal{H}_{\Lambda} \ \text{the orthogonal projection}.
$$
We are interested here by the control problem
\begin{equation}\label{abs3.1}
\left\{
\begin{array}{l}
\frac{d \theta}{dt} +A \theta=\Pi_{\Lambda} B g \quad \text{in} \ (0,\tau),
\\[2mm]
\theta(0)=\theta^0\in \mathcal{H}_{\Lambda},
\end{array}
\right.
\end{equation}
for some $\tau>0$.
We consider the linear operator
$$
G_{\Lambda} :=\int_0^{S_0} \kappa^2(s) \cosh(s\sqrt{A})\Pi_{\Lambda} B B^* \cosh(s\sqrt{A})\Pi_{\Lambda} \ ds.
$$
From \eqref{abs1.0}, $G_{\Lambda} \in \mathcal{L}(\mathcal{H}_{\Lambda})$ is symmetric, positive and invertible with
$$
\left\| G_{\Lambda}^{-1} \right\|_{\mathcal{L}(\mathcal{H}_{\Lambda})}\leq Ce^{C\sqrt{\Lambda}}.
$$
We set
$$
\sigma:=2-\frac{1}{\gamma}\in (1,2).
$$
From \cite[Lemma A.1]{Leautaud}, 
there exists $e\in C^\infty(\mathbb{R})$ such that for some constants $c_j$
\begin{equation}\label{abs2.0}
\supp e=[0,1], 
\end{equation}
\begin{equation}\label{abs2.1}
\left| \widehat e (z) \right| \leq c_1 e^{-c_2 |z|^{1/\sigma}} \quad \text{if} \ \Im(z)\leq 0,
\end{equation}
\begin{equation}\label{abs2.2}
\left| \widehat e (z) \right| \geq c_3 e^{-c_4 |z|^{1/\sigma}} \quad \text{if} \ z\in i\mathbb{R}^-.
\end{equation}
From \eqref{abs2.2}, we have that
$\widehat e(-i\tau A)\in  \mathcal{L}(\mathcal{H}_{\Lambda})$ is invertible and
\begin{equation}\label{abs2.3}
\left\| \widehat e(-i\tau A)^{-1} \right\|_{\mathcal{L}(\mathcal{H}_{\Lambda})}\leq \frac{1}{c_3}e^{c_4(\Lambda \tau)^{1/\sigma}}.
\end{equation}
Then we define 
$$
h_{\Lambda}(s):=-\frac{1}{2}\kappa^2(s) B^* \cosh(s\sqrt{A}) G_{\Lambda}^{-1} \widehat e(-i\tau A)^{-1} e^{-\tau A} \theta^0 \quad (s\in \mathbb{R}).
$$
We have that $h_{\Lambda}\in C^\infty_0(\mathbb{R},\mathcal{U})$ with $\supp h_{\Lambda}\subset (0,S_0)$ and
\begin{equation}\label{abs2.4}
\left\| h_{\Lambda} \right\|_{L^\infty(\mathbb{R},\mathcal{U})}\leq Ce^{C\sqrt{\Lambda}+c_4(\Lambda \tau)^{1/\sigma}}
	\left\| \theta^0 \right\|_{\mathcal{H}}.
\end{equation}
Thus, $\widehat h_{\Lambda}\in \mathrm{Hol}(\mathbb{C};\mathcal{U})$ and 
\begin{equation}\label{abs2.5}
\left\| \widehat h_{\Lambda}(z) \right\|_{\mathcal{U}}\leq Ce^{C\sqrt{\Lambda}+c_4(\Lambda \tau)^{1/\sigma}}e^{S_0 \left| \Im(z)\right|}
	\left\| \theta^0 \right\|_{\mathcal{H}}.
\end{equation}
As in \cite{Russell}, we introduce $Q_{\Lambda}\in \mathrm{Hol}(\mathbb{C};\mathcal{U})$ such that
$$
Q_{\Lambda}(-iz^2)=\widehat h_{\Lambda}(iz)+\widehat h_{\Lambda}(-iz) \quad (z\in \mathbb{C}).
$$
We deduce from the above relation and \eqref{abs2.5} that
\begin{equation}\label{abs2.6}
\left\| Q_{\Lambda}(z) \right\|_{\mathcal{U}}\leq Ce^{C\sqrt{\Lambda}+c_4(\Lambda \tau)^{1/\sigma}}e^{S_0 \sqrt{\left|z\right|}}
	\left\| \theta^0 \right\|_{\mathcal{H}}.
\end{equation}
We define $\mathfrak{g}_{\Lambda}\in \mathrm{Hol}(\mathbb{C};\mathcal{U})$ by
$$
\mathfrak{g}_{\Lambda}(z) := \widehat e(\tau z) Q_{\Lambda}(z).
$$
From \eqref{abs2.0}, \eqref{abs2.1} and \eqref{abs2.6}, we have
\begin{equation}\label{abs2.7}
\left\| \mathfrak{g}_{\Lambda}(z) \right\|_{\mathcal{U}}\leq Ce^{C\sqrt{\Lambda}+c_4(\Lambda \tau)^{1/\sigma}}e^{S_0 \sqrt{\left|z\right|}}e^{\tau|\Im z|}
	\left\| \theta^0 \right\|_{\mathcal{H}} \quad (z\in \mathbb{C})
\end{equation}
and
\begin{equation}\label{abs2.8}
\left\| \mathfrak{g}_{\Lambda}(z) \right\|_{\mathcal{U}}\leq Ce^{C\sqrt{\Lambda}+c_4(\Lambda \tau)^{1/\sigma}}e^{S_0 \sqrt{\left|z\right|}}e^{-c_2 \tau^{1/\sigma}|z|^{1/\sigma}}
	\left\| \theta^0 \right\|_{\mathcal{H}} \quad \text{if} \ \Im z\leq 0.
\end{equation}
Since $\sigma<2$, we can use a Paley-Wiener type theorem (see \cite[Proposition A.3]{Leautaud}) and deduce the existence of $g_\Lambda\in C^\infty_0((0,\tau);\mathcal{U})$ such that
$$
\widehat g_\Lambda(z) = \mathfrak{g}_{\Lambda}(z).
$$
In particular, from \eqref{abs2.8} and the Laplace method, for all $t\in (0,\tau)$,
\begin{equation}\label{abs3.2}
\left| g_\Lambda(t) \right| \leq \left\|\mathfrak{g}_{\Lambda} \right\|_{L^1(\mathbb{R};\mathcal{U})}
\leq Ce^{C\sqrt{\Lambda}+c_4(\Lambda \tau)^{1/\sigma}+\frac{C}{\tau^{1/(2-\sigma)}}}
	\left\| \theta^0 \right\|_{\mathcal{H}}.
\end{equation}
Now, for any $j$ such that $\lambda_j\leq \Lambda$,
\begin{multline*}
\left(\int_0^\tau e^{-(\tau-t)A}Bg_\Lambda(\tau-t) \ dt,w_j\right)_{\mathcal{H}}
=\left(B\widehat g_\Lambda(-i \lambda_j),w_j\right)_{\mathcal{H}}
\\
=\left( \widehat e(-i \tau A) B \left(\widehat h_{\Lambda}(i\sqrt{\lambda_j})+\widehat h_{\Lambda}(-i\sqrt{\lambda_j})\right), w_j\right)_{\mathcal{H}}
\\
=\left( \widehat e(-i \tau A) B \int_0^{S_0} h_\Lambda (s) 2 \cosh(s\sqrt{\lambda_j})\ ds, w_j\right)_{\mathcal{H}}
=-\left( e^{-\tau A} \theta^0, w_j\right)_{\mathcal{H}}
\end{multline*}
so that the solution $\theta$ of \eqref{abs3.1} with the control $g(t)=g_\Lambda(\tau-t)$ satisfies $\theta(\tau)=0$.

By a duality argument and \eqref{abs3.2}, this implies that
\begin{equation}\label{abs4.0}
\left\| \Pi_\Lambda e^{-\tau A} z^0 \right\|^2 \leq C_1 \exp\left(C_1\left(\frac{1}{\tau^\gamma}+\sqrt{\Lambda}+(\Lambda \tau)^{\gamma/(2\gamma-1)}\right)\right) 
\int_{0}^{\tau} \left\| B^* \Pi_\Lambda e^{-t A} z^0 \right\|^2_{\mathcal{U}} \ dt.
\end{equation}

\subsection{Proof of \cref{T2}}
We are now in a position to prove \cref{T2}, adapting the method of \cite{Miller}.
\begin{proof}[Proof of \cref{T2}]
We set
$$
z(t)=e^{-tA} z^0.
$$

Assume
$$
T^{(1)}>0, \quad \tau>0, \quad \text{and} \quad T^{(2)}=T^{(1)}+\tau.
$$
From \eqref{abs4.0}
\begin{equation}\label{abs0.0}
\left\| \Pi_\Lambda z\left(T^{(2)}\right)\right\|^2 \leq C_1 \exp\left(C_1\left(\frac{1}{\left(\varepsilon \tau \right)^\gamma}+\sqrt{\Lambda}+(\Lambda \varepsilon \tau)^{\gamma/(2\gamma-1)}\right)\right) 
\int_{T^{(2)}-\varepsilon \tau }^{T^{(2)}} \left\| B^* \Pi_\Lambda z(t) \right\|^2_{\mathcal{U}} \ dt.
\end{equation}

We set
\begin{equation}\label{LambdaT}
\Lambda = \frac{1}{\left(\varepsilon \tau\right)^{1+\gamma}}
\end{equation}
so that for
$$
\tau, \varepsilon \in (0,1),
$$
\eqref{abs0.0} becomes
\begin{equation}\label{abs0.1}
2\rho(\tau) \left\| \Pi_\Lambda z\left(T^{(2)}\right)\right\|^2 \leq 
\int_{T^{(2)}-\varepsilon \tau }^{T^{(2)}} \left\| B^* \Pi_\Lambda z(t) \right\|^2_{\mathcal{U}} \ dt
\end{equation}
with
\begin{equation}\label{abs0.2}
\rho(\tau):=\frac{1}{2C_1} \exp\left(-\frac{3 C_1}{\left(\varepsilon \tau\right)^\gamma}\right).
\end{equation}
Then from \eqref{abs0.1}, we deduce
\begin{equation}\label{abs0.3}
\rho(\tau) \left\| z\left(T^{(2)}\right)\right\|^2 \leq 
\int_{T^{(2)}-\varepsilon \tau }^{T^{(2)}} \left\| B^* z(t) \right\|^2_{\mathcal{U}} \ dt
+
C \left\|z\left(T^{(1)}\right)\right\|_{\mathcal{H}}^2  \varepsilon \tau e^{-2\Lambda \tau(1-\varepsilon)}
+\rho(\tau) \left\|z\left(T^{(1)}\right)\right\|_{\mathcal{H}}^2 e^{-2\Lambda \tau}
\end{equation}
For $\varepsilon>0$ small enough, the above relation yields
\begin{equation}\label{abs0.4}
\rho(\tau) \left\|z\left(T^{(2)}\right)\right\|_{\mathcal{H}}^2 \leq 
\int_{T^{(1)}}^{T^{(2)}} \left\| B^* z(t) \right\|^2_{\mathcal{U}} \ dt
+\rho(\tau/2) \left\|z\left(T^{(1)}\right)\right\|_{\mathcal{H}}^2
\end{equation}

Assume
$$
T\in (0,1).
$$
Then for all $k\geq 0$, \eqref{abs0.4} implies
\begin{equation}\label{abs0.5}
\rho\left(\frac{T}{2^{k+1}}\right) \left\|z\left(\frac{T}{2^{k}}\right)\right\|_{\mathcal{H}}^2 \leq 
\int_{\frac{T}{2^{k+1}}}^{\frac{T}{2^{k}}} \left\| B^* z(t) \right\|^2_{\mathcal{U}} \ dt
+\rho\left(\frac{T}{2^{k+2}}\right) \left\|z\left(\frac{T}{2^{k+1}}\right)\right\|_{\mathcal{H}}^2
\end{equation}
and thus
\begin{equation}\label{abs0.6}
\rho\left(\frac{T}{2}\right) \left\|z\left(T\right)\right\|_{\mathcal{H}}^2 \leq 
\int_{0}^{T} \left\| B^* z(t) \right\|^2_{\mathcal{U}} \ dt.
\end{equation}
Thus for some constant $C_2>0$, 
\begin{equation}\label{abs0.7}
\left\|z\left(T\right)\right\|_{\mathcal{H}}^2 \leq 
C_2 \exp\left(\frac{C_2}{T^\gamma}\right)
\int_{0}^{T} \left\| B^* z(t) \right\|^2_{\mathcal{U}} \ dt.
\end{equation}

\end{proof}

\section{Functional framework and spectral inequality}\label{Sec_fun}
In order to prove \cref{Tmain}, we are going to apply \cref{T2}.
In this section, we first give the functional framework associated with \eqref{ns0.2}. 
Then we write the spectral inequality that will be proven in the remaining part of the article.
\subsection{Functional framework}
We recall that $\mathcal{H}$ is defined by \eqref{gev7.4}. We also define
$$
\mathcal{V} :=  \left\{\left[u,\eta\right]\in  \left({H}^1(\Omega)\times H^1(\mathcal{I})\right) \cap \mathcal{H}
 \ ; \ u_1=0 \ \text{on} \ \partial\Omega\right\}.
$$
We denote by $P_0$ the orthogonal projection from ${L}^2(\Omega)\times L^2_0(\mathcal{I})$ onto $\mathcal{H}$. 
We now define the linear operator 
$A_0 : \mathcal{D}(A_0)\subset \mathcal{H}\to \mathcal{H}$ by
\begin{equation}\label{fs3.4}
\mathcal{D}(A_0):=   \mathcal{V} \cap  \Big [{H}^2(\Omega)\times H^2(\mathcal{I})\Big],
\end{equation}
and for $\begin{bmatrix} u,\eta \end{bmatrix}\in \mathcal{D}(A_0)$, we set
\begin{equation}\label{fs3.2}
{A}_0\begin{bmatrix} u \\ \eta \end{bmatrix} := 
P_0\begin{bmatrix}
 \Delta u \medskip\\ \medskip
 \displaystyle \partial_{x_1}^2 \eta
\end{bmatrix}.
\end{equation}
Then one can check that \eqref{ns0.2} writes 
$$
\left\{\begin{array}{rl}
\frac{dz}{dt} = A_0 z \quad \text{in} \ (0,T), \\
z(0)=z^0.
\end{array}\right.
$$
with $z=[u,\eta]$, $z^0=[u^0,\eta^0]$.
In the next proposition, we show in particular that $A_0$ is the infinitesimal generator of a semigroup so that $z(t)=e^{tA_0}z^0$ for $t\geq 0$.
\begin{Proposition}\label{AStrongContSG}
The operator $A_0$ defined by \eqref{fs3.4}--\eqref{fs3.2} has compact resolvents, 
and is self-adjoint negative on $\mathcal{H}$.
\end{Proposition}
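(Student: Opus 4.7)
The plan is to realize $-A_0$ through the representation theorem for closed densely defined symmetric positive bilinear forms applied to
\[
a([u,\eta],[v,\xi]) := \int_{\Omega} \nabla u : \nabla v \, dx + \int_{\mathcal{I}} \partial_{x_1}\eta\, \partial_{x_1}\xi\, dx_1, \qquad [u,\eta],[v,\xi]\in \mathcal{V}.
\]
The preliminary verifications are standard: $\mathcal{V}$ is a Hilbert space with the $H^1\times H^1$ inner product (the constraints defining $\mathcal{V}$ are closed under $H^1$-convergence), it embeds densely and continuously into $\mathcal{H}$ (density follows by mollification inside $\Omega$ corrected by a suitable divergence-free lifting to enforce the trace $u_1|_{\Gamma_1}=0$), and $a$ is continuous, symmetric and coercive on $\mathcal{V}$. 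Coercivity combines Poincaré for $u_1$ (which vanishes on all of $\partial\Omega$), the one-dimensional Poincaré inequality in $x_2$ for $u_2$ (which vanishes on $\Gamma_0$), and Poincaré--Wirtinger for $\eta\in L^2_0(\mathcal{I})$, together with the trace identity $u_2|_{\Gamma_1}=\eta$ to control the full $H^1\times H^1$ norm by $a([u,\eta],[u,\eta])$.

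The form representation theorem then yields a unique positive self-adjoint operator $T$ on $\mathcal{H}$ with form domain $\mathcal{V}$ and form $a$. Compactness of the resolvent is immediate: the Rellich--Kondrachov theorem makes $\mathcal{V}\hookrightarrow\mathcal{H}$ compact, and coercivity yields $(T+I)^{-1}\in\mathcal{L}(\mathcal{H},\mathcal{V})$, so $(T+I)^{-1}$ is compact on $\mathcal{H}$. It remains to identify $T=-A_0$.

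For the inclusion $\mathcal{D}(A_0)\subset\mathcal{D}(T)$, pick $[u,\eta]\in\mathcal{D}(A_0)$ and $[v,\xi]\in\mathcal{V}$ and integrate by parts. The Dirichlet condition $v=0$ on $\Gamma_0$ kills the corresponding boundary integral; on $\Gamma_1$, with $n=e_2$, $v_1=0$ and $v_2=\xi$, what remains is $\int_{\mathcal{I}}(\partial_{x_2}u_2)\xi\,dx_1$. But $\div u=0$ gives $\partial_{x_2}u_2|_{\Gamma_1}=-\partial_{x_1}u_1|_{\Gamma_1}$, and since $u_1|_{\Gamma_1}=0$ the tangential derivative $\partial_{x_1}u_1$ also vanishes on $\Gamma_1$, so this boundary term disappears. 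Since $[v,\xi]\in\mathcal{H}$, the orthogonal projection $P_0$ may be inserted at no cost, and one obtains $a([u,\eta],[v,\xi])=-\langle A_0[u,\eta],[v,\xi]\rangle_{\mathcal{H}}$, which shows $[u,\eta]\in\mathcal{D}(T)$ and $T[u,\eta]=-A_0[u,\eta]$.

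The reverse inclusion $\mathcal{D}(T)\subset\mathcal{D}(A_0)$ is the main obstacle and rests on elliptic regularity. Given $[u,\eta]\in\mathcal{D}(T)$ with $T[u,\eta]=[F,G]$, testing $a$ against $[v,0]$ with $v\in C^{\infty}_c(\Omega)^2$ divergence-free and invoking de Rham produces a pressure $\pi\in L^2(\Omega)$ satisfying $-\Delta u+\nabla\pi=F$ in $\Omega$; testing then against general $[v,\xi]\in\mathcal{V}$ yields the boundary coupling $-\partial_{x_1}^2\eta+\pi|_{\Gamma_1}=G$ on $\mathcal{I}$. This is a Stokes system on $\Omega$ with Dirichlet condition on $\Gamma_0$ and a Ventcel-type coupling to the one-dimensional elliptic equation for $\eta$ on $\Gamma_1$; the corresponding elliptic regularity theory (of the type developed for this coupled system in the fluid-structure references cited in the introduction) delivers $u\in H^2(\Omega)$ and $\pi\in H^1(\Omega)$, after which $\pi|_{\Gamma_1}\in H^{1/2}(\mathcal{I})$ and the tangential boundary equation upgrade $\eta$ to $H^2(\mathcal{I})$. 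This gives $T=-A_0$, so $A_0$ is self-adjoint negative with compact resolvent.
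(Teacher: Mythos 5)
Your proposal is correct and takes essentially the same route as the paper: the paper also reduces self-adjointness to the variational problem for the form $a$ (it solves $-A_0[u,\eta]=[f,g]$ weakly via Riesz and then invokes elliptic regularity to show the solution lies in $\mathcal{D}(A_0)$), which is the same content as your form-representation-theorem argument for $T=-A_0$. The one place you gloss over is the $H^2$ regularity: it does not follow from a single application of Stokes regularity but from a bootstrap alternating between the Stokes system and the equation for $\eta$ on $\Gamma_1$ — first $(u,p)\in H^{3/2}\times H^{1/2}$ from $\eta\in H^{1}$, then $p|_{\Gamma_1}\in H^{-1/2}$ gives $\eta\in H^{3/2}$, then $(u,p)\in H^{2}\times H^{1}$, then $\eta\in H^{2}$ — which the paper carries out explicitly and which you should either reproduce or cite precisely rather than appealing generically to the fluid-structure literature.
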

\begin{proof}
By definition of $A_0$, we have for any $[u,\eta], [v,\zeta]\in \mathcal{D}(A_0)$, 
$$
\left\langle A_0 
\begin{bmatrix}
u\\ \eta
\end{bmatrix},
\begin{bmatrix}
v\\ \zeta
\end{bmatrix}
 \right\rangle_{\mathcal{H}}
=\int_{\Omega} \Delta u \cdot v \ dx +\int_0^{2\pi} \left(\partial_{x_1}^2 \eta\right) \zeta \ dx_1
=-\int_{\Omega} \nabla u : \nabla v \ dx -\int_0^{2\pi} \left(\partial_{x_1} \eta\right)\left(\partial_{x_1} \zeta\right) \ dx_1.
$$
Thus $A_0$ is symmetric and negative (by using the Poincar\'e inequalities).

In order to show that $A_0$ is self-adjoint it is sufficient to show that it is onto. Assume $[f, g]\in \mathcal{H}$ and let us solve the equation
\begin{equation}\label{onto}
-A_0 \begin{bmatrix}
u\\ \eta
\end{bmatrix}=[f, g].
\end{equation}
Multiplying the above equation by $[v,\zeta]\in \mathcal{V}$ leads to the weak formulation
\begin{equation}\label{weakform}
\int_{\Omega} \nabla u : \nabla v \ dx +\int_0^{2\pi} \left(\partial_{x_1} \eta\right)\left(\partial_{x_1} \zeta\right) \ dx_1
=\int_{\Omega} f\cdot v \ dx+\int_0^{2\pi} g \zeta \ dx_1 \quad ([v,\zeta]\in \mathcal{V}).
\end{equation}
Using the Poincaré inequalities, we see that we can apply the Riesz theorem and deduce the existence and uniqueness of
$[u,\eta]\in \mathcal{V}$ solution of \eqref{weakform}. Then if $v\in C^\infty_c(\Omega)$ with $\div v=0$ and $\zeta=0$ in \eqref{weakform}, we obtain that
\begin{equation}\label{weakform2}
\int_{\Omega} \nabla u : \nabla v \ dx
=\int_{\Omega} f\cdot v \ dx \quad (v\in C^\infty_c(\Omega), \ \div v=0).
\end{equation}
Using the De Rham theorem, we deduce the existence of $p$ such that
\begin{equation}\label{Stokes}
\begin{cases}
- \Delta u + \nabla p = f \quad \text{in} \ \Omega,\\
\div u=0  \quad \text{in} \ \Omega,\\
u=0 \quad \text{on} \ \Gamma_0\\
u=\eta e_2 \quad \text{on} \ \Gamma_1.
\end{cases}
\end{equation}
Using the elliptic regularity of the Stokes system, we deduce that $(u,p)\in H^{3/2}(\Omega)\times H^{1/2}(\Omega)$.
Multiplying the first above equation by $v$, with $[v,\zeta]\in \mathcal{V}$, we deduce that for any $\zeta\in H^1(\mathcal{I})\cap L^2_0(\mathcal{I})$,
\begin{equation}\label{Beamheat}
\int_0^{2\pi} \left(\partial_{x_1} \eta\right)\left(\partial_{x_1} \zeta\right) \ dx_1
=-\langle p_{|\Gamma_1},\zeta \rangle+\int_0^{2\pi} g \zeta \ dx_1.
\end{equation}
Since $p_{|\Gamma_1}=\left(\left[pI_3-\nabla u\right]n\right)_{|\Gamma_1}\in H^{-1/2}(\mathcal{I})$, we deduce that $\eta\in \mathcal{H}^{3/2}(\mathcal{I})$ and from \eqref{Stokes} that $(u,p)\in H^2(\Omega)\times H^1(\Omega)$. Thus $p_{|\Gamma_1}\in H^{1/2}(\mathcal{I})$, and from \eqref{Beamheat}, we deduce that
$\eta\in H^2(\mathcal{I})$. We conclude that $[u,\eta]\in \mathcal{D}(A_0)$ and satisfies \eqref{onto}.

The fact that $A_0$ has compact resolvents is coming from the compact embedding of $H^2$ into $L^2$ for bounded domains.
\end{proof}

In particular, the eigenvalues $\lambda_j>0$ of $-A_0$ satisfy $\lambda_j\to \infty$ and there exists
\begin{equation}\label{ns7.1}
\left(\begin{bmatrix}
u^{(j)}\\ \eta^{(j)}
\end{bmatrix}\right)_j \quad \text{orthonormal basis of} \quad \mathcal{H}
\end{equation}
composed by eigenvectors of $A_0$:
\begin{equation}\label{ns7.0}
-A_0 \begin{bmatrix}
u^{(j)}\\ \eta^{(j)}
\end{bmatrix}=\lambda_j
\begin{bmatrix}
u^{(j)}\\ \eta^{(j)}
\end{bmatrix}
\end{equation}
The above system can be written as
\begin{equation}\label{ns4.0}
\begin{cases}
 - \Delta u^{(j)} + \nabla p^{(j)} = \lambda_j u^{(j)}\\
\div u^{(j)}=0\\
u^{(j)}=0 \quad \text{on} \ \Gamma_0\\
u^{(j)}=\eta^{(j)} e_2 \quad \text{on} \ \Gamma_1\\
 -\partial_{x_1}^2 \eta^{(j)} - p^{(j)}=\lambda_j \eta^{(j)} \quad \text{in}\  \mathcal{I}
\end{cases}
\end{equation}
and more precisely as
\begin{equation}\label{ns4.0-bis}
\begin{cases}
-\lambda_j u_1^{(j)} - (\partial_{x_1}^2+\partial_{x_2}^2) u_1^{(j)} + \partial_{x_1} p^{(j)} = 0\\
-\lambda_j u_2^{(j)} - (\partial_{x_1}^2+\partial_{x_2}^2) u_2^{(j)} + \partial_{x_2} p^{(j)} = 0\\
\partial_{x_1} u_1^{(j)}+\partial_{x_2} u_2^{(j)}=0\\
u_1^{(j)}=0 \quad \text{on} \ \partial \Omega\\
u_2^{(j)}=0 \quad \text{on} \ \Gamma_0\\
-\lambda_j u_2^{(j)} -\partial_{x_1}^2 u_2^{(j)} = p^{(j)} \quad \text{on} \ \Gamma_1.
\end{cases}
\end{equation}

\subsection{Spectral inequality}
We are now in a position to state the spectral inequality for the operator $A_0$ defined in the previous section.
\begin{Theorem}\label{SpectralInequality}
Let $\omega_0$ be a nonempty open subset of $\Omega$ and $S_0>0$. 
There exist $C>0$ and $\kappa\in C^{\infty}_{0}(0,S_{0})$ such that for any $\Lambda>0$, and for any $(a_{j})_{j}\in \mathbb C^{\mathbb N}$,
\begin{equation}\label{ns4.1}
\sum_{\lambda_{j}\leq \Lambda} \left| a_j \right|^2
\leq 
Ce^{C\sqrt{\Lambda}} \int_{0}^{S_{0}} \kappa^2(s)
\left\| \sum_{\lambda_{j}\leq \Lambda} a_j \cosh(s\sqrt\lambda_{j})
u^{(j) }
 \right\|_{L^{2}(\omega)}^{2} \ ds.
\end{equation}
\end{Theorem}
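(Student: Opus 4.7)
The plan is to follow the Lebeau--Robbiano strategy adapted to our coupled Stokes/heat system, as indicated in the introduction. Given $\Lambda>0$ and coefficients $(a_j)_j$, I introduce the augmented functions on the cylinder $\Omega\times\R$:
\begin{equation*}
U(x,s):=\sum_{\lambda_j\leq\Lambda} a_j\cosh(s\sqrt{\lambda_j})u^{(j)}(x),
\quad
P(x,s):=\sum_{\lambda_j\leq\Lambda} a_j\cosh(s\sqrt{\lambda_j})p^{(j)}(x),
\end{equation*}
and analogously $N(x_1,s)$ for $\eta^{(j)}$. Because $\partial_s^2\cosh(s\sqrt{\lambda_j})=\lambda_j\cosh(s\sqrt{\lambda_j})$, system \eqref{ns4.0-bis} shows that $(U,P)$ solves the augmented Stokes system $\partial_s^2 U+\Delta_x U-\nabla_x P=0$ with $\operatorname{div}_x U=0$ in $\Omega\times\R$, together with $U=0$ on $\Gamma_0\times\R$, $U_1=0$ on $\Gamma_1\times\R$, and the Ventcel-type boundary relation $-(\partial_s^2+\partial_{x_1}^2)U_2=P$ on $\Gamma_1\times\R$.

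Next, I would establish an interpolation inequality of Lebeau--Robbiano type for the augmented system, roughly of the form
\begin{equation*}
\|U(\cdot,0)\|_{L^2(\Omega)}^2\;\lesssim\;\Bigl(\int_0^{S_0}\kappa^2(s)\|U(\cdot,s)\|_{L^2(\omega_0)}^2\,ds\Bigr)^{\theta}\,\|(U,P)\|_{\mathcal{X}}^{2(1-\theta)}
\end{equation*}
for some $\theta\in(0,1)$ and an appropriate norm $\mathcal{X}$ on $\Omega\times(-S_0,S_0)$. This is the inequality stated and proved in \cref{Sec_spectral}. The derivation relies on local Carleman estimates for the augmented elliptic operator $\partial_s^2+\Delta_x$ coupled with the incompressibility constraint, propagated from an interior observation in $\omega_0$ up to a neighborhood of the slice $s=0$ via successive Carleman/interpolation steps, including one at the boundary $\Gamma_1$ that requires the Ventcel-type Carleman estimate established in the appendix using \cite{Buffe17}.

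The principal obstacle, and the reason this is more delicate than in the scalar heat setting, is the pressure: as pointed out in the remark following \cref{Tmain} and in \cite{ChavesLebeau}, the augmented operator fails to enjoy unique continuation along the $s$-variable because of $P$. I would therefore prove a quantitative estimate controlling $P$ by $U$ on shifted cylinders — this is exactly the content of \cref{Sec_pressure}, and is the technical heart of the argument. Combining the pressure bound with the interpolation step above yields the full interpolation inequality for $U$ alone.

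Finally, I would conclude as follows. Using the orthonormality \eqref{ns7.1} of the eigenvectors together with the straightforward bound $\cosh(s\sqrt{\lambda_j})\leq e^{S_0\sqrt{\Lambda}}$ for $|s|\leq S_0$, one gets $\|(U,P)\|_{\mathcal{X}}^2\lesssim e^{CS_0\sqrt{\Lambda}}\sum_{\lambda_j\leq\Lambda}|a_j|^2$. On the other hand $U(x,0)=\sum_{\lambda_j\leq\Lambda}a_j u^{(j)}(x)$, so orthonormality again gives $\sum|a_j|^2=\|U(\cdot,0)\|_{L^2(\Omega)}^2+\|\text{trace contributions}\|^2$, and after inserting these two identities into the interpolation inequality and absorbing the factor $e^{CS_0\sqrt{\Lambda}(1-\theta)}$ by a standard Young-type argument, we recover \eqref{ns4.1} with the prefactor $Ce^{C\sqrt{\Lambda}}$ and the cutoff $\kappa$ inherited from the localization of the Carleman weights in $s$.
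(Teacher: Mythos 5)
Your proposal is correct and follows essentially the same route as the paper: augment via $\cosh(s\sqrt{\lambda_j})$ to the elliptic Stokes system with a Ventcel boundary condition, prove a Lebeau--Robbiano-type interpolation inequality by combining local Carleman estimates (interior, Dirichlet on $\Gamma_0$, Ventcel on $\Gamma_1$), handle the pressure obstruction with the dedicated a priori estimate of \cref{Sec_pressure}, and conclude via orthonormality of $([u^{(j)},\eta^{(j)}])_j$ in $\mathcal{H}$. The only minor imprecisions are that the paper's final interpolation inequality (\cref{globalinterpolation}) carries no pressure in the background norm (thanks to Poincar\'e--Wirtinger reducing the pressure observation to $\|\Delta_z U\|_{L^2(\mathcal O_0)}$), and the last algebraic absorption is a straightforward division/exponent manipulation rather than a Young inequality.
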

In order to prove \cref{SpectralInequality}, we define for $s\in (0,S_0)$ and $x\in \Omega$,
\begin{equation}\label{DefofUandP}
U(s,x):=\sum_{\lambda_{j}\leq \Lambda} a_{j}\cosh(\sqrt \lambda_{j}s)u^{(j)}(x),\quad 
P(s,x):= \sum_{\lambda_{j}\leq \Lambda} a_{j}\cosh(\sqrt \lambda_{j}s) p^{(j)}(x)+c_P(s)
\end{equation}
and the domains
\begin{equation}\label{defZJ}
Z:=(0,S_{0})\times \Omega=(0,S_{0})\times \mathcal{I}\times (0,1), \quad
J_i:=(0,S_{0})\times \Gamma_i=(0,S_{0})\times \mathcal{I}\times \{i\}
\quad
(i=0,1).
\end{equation}
From \eqref{ns4.0-bis}, we deduce that 
\begin{equation}\label{ns2.0.1}
\left\{\begin{array}{l}
-\partial_{s}^2 U_1 - (\partial_{x_1}^2+\partial_{x_2}^2) U_1 + \partial_{x_1} P = 0 \quad \text{in} \ Z,\\
-\partial_{s}^2 U_2 - (\partial_{x_1}^2+\partial_{x_2}^2) U_2 + \partial_{x_2} P = 0 \quad \text{in} \ Z,\\
\partial_{x_1} U_1+\partial_{x_2} U_2=0\quad \text{in} \ Z,\\
U_1=0 \quad \text{on} \ J_{0}\cup J_{1}\\
U_2=0 \quad \text{on} \ J_{0},\\
-\partial_s^2 U_2 -\partial_{x_1}^2 U_2 = P -m_{\mathcal I}(P)\quad \text{on} \ J_{1}.
\end{array}
\right.
\end{equation}
In the above system, we write
$$
m_{\mathcal I }(P) := \frac{1}{2\pi} \int_0^{2\pi} P(x_1,1) \ dx_1
$$
and by using this notation in the last equation of \eqref{ns2.0.1}, we can replace the pressure that should satisfies a relation of the form
\eqref{meanpressure} by the pressure $P$ defined up to a function $c_P$ of $s$. 
In that way, we can, in what follows, impose another condition on $P$ (typically that its mean on an open set is zero).

To show \cref{SpectralInequality}, we first truncate $U$ and $P$ in a neighborhood of $\{s=s_{0}\}$, with 
$$
s_{0}:=\frac{S_{0}}{2}.
$$ 
We thus consider $\chi\in C^{\infty}_{0}((0,S_{0}))$, satisfying $0\leq \chi\leq 1$ and 
\begin{equation}\label{defchi}
\chi(s)=\begin{cases} 1 & \text{if } |s-s_{0}| \leq S_{0}/8, \\  0 & \text{if } |s-s_{0}| \geq S_{0}/6. 
\end{cases}
\end{equation}
We work with the following localized solutions 
\begin{equation}\label{defup}
u(s,x_1,x_2):=\chi(s) U(s,x_1,x_2), \quad p(s,x_1,x_2):=\chi(s) P(s,x_1,x_2)
\end{equation}
that satisfy 
\begin{equation}\label{ns2.1.0}
\left\{\begin{array}{l}
-\partial_{s}^2 u_1 - (\partial_{x_1}^2+\partial_{x_2}^2) u_1 + \partial_{x_1} p = f_{1} \quad \text{in} \ Z\\
-\partial_{s}^2 u_2 - (\partial_{x_1}^2+\partial_{x_2}^2) u_2 +  \partial_{x_2} p = f_{2} \quad \text{in} \ Z\\
\partial_{x_1} u_1+\partial_{x_2} u_2= 0\quad \text{in} \ Z\\
u_1=0 \quad \text{on} \ J_{0}\cup J_{1}\\
u_2=0 \quad \text{on} \ J_{0}\,\\
-\partial_s^2 u_2 -\partial_{x_1}^2 u_2 = f_{3}+ p-m_{\mathcal I}(p) \quad \text{on} \ J_{1},
\end{array}
\right.
\end{equation}
where 
\begin{equation}\label{deff123}
f_{1}:=-\chi'' U_1-2\chi' \partial_s U_1,\quad
f_{2}:=-\chi'' U_2-2\chi' \partial_s U_2,\quad
\quad
f_{3}:=-\chi'' (U_2)_{|_{J_{1}}}-2\chi'  (\partial_s U_2)_{|_{J_{1}}}.
\end{equation}
We also have that
\begin{equation}\label{ns2.1.1}
u=0 \quad \text{and} \quad p=0 \quad \text{if} \ s\notin \left[\frac{1}{3}S_0,\frac{2}{3}S_0\right].
\end{equation}
As usual, we can use the three first equations to obtain the following equation for the pressure:
\begin{equation}\label{ns2.2}
-\Delta p=- (\partial_{x_1}^2+\partial_{x_2}^{2}) p=\partial_{x_1} f_{1}+\partial_{x_2} f_{2}=0.
\end{equation}

\section{A global observability estimate on the pressure}\label{Sec_pressure}
In this section, we prove a global estimate on the pressure. We first introduce our weight and the corresponding conjugated operators.
We then state our main result, that is \cref{Estimateonpressureterms}. Then we show a first estimate on the pressure involving high frequency pressure terms at the boundary. Such terms are then estimated by showing some a priori estimates and this allows us to prove \cref{Estimateonpressureterms}.

\subsection{Choice of the weight and conjugated operators}\label{choice}
Let us consider a nonempty open set $\omega_0$ such that $\overline{\omega_0}\subset \omega$.
Let
$$
\lambda>0, \tau>0.
$$
Then we consider $\widetilde \psi\in C^{\infty}(\overline{\Omega};\mathbb{R}^+)$, such that 
\begin{equation}\label{weight2}
\widetilde{\psi}(x_1,x_2)=1-x_2 \text{ in a neighborhood of } \{x_2=1\},  \text{ and } \widetilde{\psi}(x_1,x_2)=x_2 \text{ in a neighborhood of } \{x_2=0\}
\end{equation}
and such that all its critical points belong to $\omega_0$:
\begin{equation} \label{weight41}
\nabla \widetilde{\psi} (x) =0 \implies x\in\omega_0.
\end{equation}
We set
\begin{equation}\label{weight1}
\varphi(s,x):=e^{\lambda \widetilde \psi(x)-(s-s_{0})^2}, \quad 
\varphi_{0}(s) :=e^{-(s-s_{0})^2}.
\end{equation}
Note that with our above choices, 
$$
\varphi_0(s)= \varphi(s,\cdot,0)=\varphi(s,\cdot,1)=\min_{x\in \overline{\Omega}} \varphi(s,x).
$$
We recall that we define $(u,p)$ from $(U,P)$ by \eqref{defup} (truncation in $s$)
and that the source $f_i$ are defined by \eqref{deff123}. We then define
\begin{equation}\label{weight3}
v:=e^{\tau \varphi} u, \quad q:=e^{\tau \varphi} p, \quad g_{i}:=e^{\tau \varphi} f_{i} \quad (i\in\{1,\dots,3\}).
\end{equation}
In order to take into account the dependence in $s$, we write
$$
z=(s,x)=(s,x_1,x_2)\in Z, \quad 
\nabla_{z} = \begin{bmatrix} \partial_{s}\\ \partial_{x_1}\\ \partial_{x_2}\end{bmatrix}, \quad
\Delta_{z}=\partial_{s}^2+\partial_{x_1}^2+\partial_{x_2}^2,
$$
and their tangential counterparts
$$
\nabla_{s,x_1} = \begin{bmatrix} \partial_{s}\\ \partial_{x_1}\end{bmatrix}, \quad
\Delta_{s,x_1}=\partial_{s}^2+\partial_{x_1}^2.
$$ 
We keep our previous notation 
$$
\nabla = \begin{bmatrix}  \partial_{x_1}\\ \partial_{x_2}\end{bmatrix}, \quad 
\Delta=\partial_{x_1}^2+\partial_{x_2}^2.
$$

The equations satisfied by $v$ and $q$ can be written with the introduction of the following conjugated operators:
\begin{align}
Q_{\varphi} & 
:= -e^{\tau\varphi}\Delta_{z}e^{-\tau\varphi} 
= -\Delta_{z} +2\tau\nabla_{z}\varphi\cdot\nabla_{z}-\tau^{2}|\nabla_{z}\varphi|^{2} +\tau(\Delta_{z} \varphi ),
\\
D_{\varphi} & 
:= - e^{\tau\varphi}\Delta e^{-\tau\varphi} 
= -\Delta+2\tau\left(\nabla \varphi \right)\cdot \nabla - \tau^{2}|\nabla\varphi|^{2} +\tau (\Delta\varphi ),
\label{defDphi}
\\
S_{\varphi} &
:= -e^{\tau\varphi_{0}} \Delta_{s,x_1}e^{-\tau\varphi_{0}} 
= -\Delta_{s,x_1}+2\tau(\partial_{s}\varphi_{0})\partial_{s}-\tau^{2}(\partial_{s}\varphi_{0})^{2}+\tau(\partial_{s}^{2}\varphi_{0}).
\end{align}
Then we deduce from \eqref{ns2.1.0} and \eqref{ns2.2} the following conjugated system:
\begin{equation}\label{ns2.1conj}
\left\{\begin{array}{l}
Q_{\varphi } v_1 + e^{\tau\varphi}\partial_{x_1}p = g_{1} \quad \text{in} \ Z,\\
Q_{\varphi} v_2 +  e^{\tau\varphi}\partial_{x_2}p = g_{2} \quad \text{in} \ Z,\\
D_{\varphi} q =0 \quad \text{in} \ Z,\\
v_1=0 \quad \text{on} \ J_{0}\cup J_{1},\\
v_2=0 \quad \text{on} \ J_{0},\\
S_{\varphi} v_2 = g_{3}+ q-m_{\mathcal I}(q) \quad \text{on} \ J_{1}.
\end{array}
\right.
\end{equation}
We define $h_i$ by
\begin{equation}\label{weight5}
h_{i}:=e^{\tau \varphi_{0}} f_{i} \quad (i=1,2,3).
\end{equation}
The main result of this section is the following result.
\begin{Theorem}\label{Estimateonpressureterms}
There exist $\lambda_0=\lambda_0(\widetilde \psi,S_0)>0$ and $\tau_0=\tau_0(\widetilde \psi,S_0)>0$ such
that for any $\lambda \geq \lambda_0$ and $\tau\geq \tau_0$, there exists $C=C(\lambda, \widetilde \psi,S_0)>0$
such that for any $\Lambda>0$ and for any $(a_{j})_{j}\in \mathbb C^{\mathbb N}$, the function $q$ defined by 
\eqref{DefofUandP}, \eqref{defup} and \eqref{weight3} satisfies 
\begin{multline}\label{estimatepressure0000}
\tau^{3}\| q \|_{L^{2}(Z)}^{2} + \tau \| \nabla  q \|^{2}_{L^{2}(Z)} 
+ \tau^{3} \left| q-m_{\mathcal I}(q)\right|_{L^{2}(J_{1})}^{2}
+ \tau \left| \nabla q \right|_{L^{2}((0,S_0)\times \partial \Omega)}^{2} 
\\
\leq C\left( \tau^{3} \| q \|^{2}_{L^{2}((0,S_0)\times \omega_0)} 
+ \tau \| \nabla q \|^{2}_{L^{2}((0,S_0)\times \omega_0)}   
+\tau\left(\left\| \partial_{x_1} h_1\right\|_{L^2(Z)}^2
+\left\| \partial_{x_1} h_2\right\|_{L^2(Z)}^2
+\left| \partial_{x_1} h_3\right|_{L^2(J_1)}^2\right)\right).
\end{multline}
\end{Theorem}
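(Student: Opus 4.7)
The strategy is to exploit the harmonicity $-\Delta p=0$ of the pressure (see \eqref{ns2.2}) by applying an elliptic Carleman estimate to the conjugated pressure $q$, treating $s$ as a parameter and then integrating in $s$, before absorbing the resulting boundary contributions on $J_0$ and $J_1$ via the coupling encoded in \eqref{ns2.1conj}. Since by \eqref{weight41} every critical point of $\widetilde\psi$ lies in $\omega_0$, a classical pointwise Carleman estimate for $-\Delta$ in $(x_1,x_2)$, combined with $D_\varphi q=0$ and integrated in $s\in(0,S_0)$, should yield
\begin{equation*}
\tau^3\|q\|_{L^2(Z)}^2+\tau\|\nabla q\|_{L^2(Z)}^2+\tau\|\nabla q\|_{L^2((0,S_0)\times\partial\Omega)}^2 \lesssim \tau^3\|q\|_{L^2((0,S_0)\times\omega_0)}^2+\tau\|\nabla q\|_{L^2((0,S_0)\times\omega_0)}^2+\mathcal{B}_0+\mathcal{B}_1,
\end{equation*}
where $\mathcal{B}_i$ collects the remaining boundary contributions on $J_i$ of size $\tau^3|q|^2$. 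The favorable term $\tau|\nabla q|^2$ on $\partial\Omega$ already sits on the left because $\partial_\nu\varphi<0$ on both $J_0$ and $J_1$ by \eqref{weight2}.

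On the lower boundary $J_0$, the no-slip condition $v=0$ kills every tangential derivative of $v$, so the first two equations of \eqref{ns2.1conj} degenerate into pointwise identities expressing $\partial_{x_1}q|_{J_0}$ and $\partial_{x_2}q|_{J_0}$ through the traces $g_1|_{J_0}$, $g_2|_{J_0}$ and the second normal derivatives of $v_1,v_2$. Combined with an a priori estimate on $v$ (obtained by testing the momentum equations against suitable multipliers and absorbing the pressure via divergence-freeness), this lets $\mathcal{B}_0$ be absorbed into the left-hand side, up to source terms bounded by $\|\partial_{x_1}h_1\|_{L^2(Z)}^2+\|\partial_{x_1}h_2\|_{L^2(Z)}^2$.

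The principal obstacle is the upper boundary $J_1$, where the Ventcel-type condition $S_\varphi v_2=g_3+q-m_{\mathcal{I}}(q)$ couples $q|_{J_1}$ to the tangential wave operator on $v_2$ rather than prescribing $q$ directly, and the nonlocal projection $m_{\mathcal{I}}(q)$, reflecting the constraint \eqref{meanpressure}, must be tracked carefully. My plan is threefold: (i) derive a priori energy estimates on $v$ by tangentially differentiating \eqref{ns2.1.0} in $x_1$, since $\partial_{x_1}$ is tangent to both $J_0$ and $J_1$ and commutes with the Dirichlet and Ventcel boundary conditions, naturally producing the right-hand side terms $\|\partial_{x_1}h_1\|^2$, $\|\partial_{x_1}h_2\|^2$, $|\partial_{x_1}h_3|^2$ appearing in \eqref{estimatepressure0000}; (ii) feed these into the Ventcel identity to estimate $|q-m_{\mathcal{I}}(q)|_{L^2(J_1)}$ with weight $\tau^3$; (iii) invoke the Ventcel-type interpolation estimate of the appendix, itself based on the Carleman estimate of \cite{Buffe17}, to close $\mathcal{B}_1$ and recover the tangential second-order boundary traces hidden in $S_\varphi v_2$.

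Combining the interior Carleman estimate with the $J_0$ and $J_1$ boundary estimates, and taking $\lambda$ and then $\tau$ large enough to absorb lower-order terms, delivers \eqref{estimatepressure0000}. The hardest step is (ii): one must carefully balance the $\tau$-powers between the interior Carleman estimate, the a priori bound on $v$, and the Ventcel identity, while simultaneously handling the nonlocal mean-value projection $m_{\mathcal{I}}(q)$ that is forced by transferring the incompressibility constraint to the boundary.
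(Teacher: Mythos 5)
The central technical device in the paper's proof is a \emph{tangential frequency decomposition} of the boundary traces of $q$ on $\partial\Omega$, and this idea is absent from your proposal. When you run a Carleman estimate on $D_\varphi q=0$ with the weight $\varphi=e^{\lambda\widetilde\psi-(s-s_0)^2}$, the boundary contribution is not uniformly of the good sign. The paper computes it explicitly (proof of \cref{Carlemanonpressureprop}) and finds it is bounded below by
\begin{equation*}
\frac12 \tau \lambda \varphi_0 \int_{\partial\Omega}  \left(\partial_{x_2}\mathcal Q\right)^2  d\Gamma
	+2\pi \tau \lambda \varphi_0  \sum_{k\in \mathbb{Z}} \left(\frac34 \tau^{2}\lambda^2 \varphi_0^2 - k^2\right)\left|a_k(\mathcal{Q})\right|^2,
\end{equation*}
which is positive only for the Fourier modes $k$ with $k^2\lesssim\tau^2$. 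This forces the split into $\mathrm{LF}_\tau$ and $\mathrm{HF}_\tau$: the low-frequency boundary pressure traces are controlled by the Carleman estimate itself, while the high-frequency part becomes an unfavorable term that must be absorbed by another mechanism. Your proposal says "the favorable term $\tau|\nabla q|^2$ already sits on the left because $\partial_\nu\varphi<0$", which is true for the normal component $\partial_{x_2}q$ but not for the tangential component $\partial_{x_1}q$; without the frequency split, that tangential term sits on the wrong side and nothing in your three-step plan for $J_1$ returns it.

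The second thing the paper does, and your proposal does not, is estimate the high-frequency boundary pressure traces without any further Carleman machinery. \cref{PropositionHF} works mode-by-mode on the conjugated system \eqref{ns3.1conj}: for $k\in\mathrm{HF}_\tau$ the large zeroth-order term $k^2$ dominates $\tau^2(\varphi_0')^2$ so that plain energy estimates (multiplying by $w^k$, then by $-\partial_s^2 w^k$) are coercive; this gives $H^2$-type control of the velocity mode $w^k$, and then the Ne\v{c}as inequality bounds $\pi^k$ in $L^2$, hence its trace. Finally the gain in $k$ converts into the $\partial_{x_1}h_i$ right-hand side you correctly anticipated. Your instinct that tangential differentiation in $x_1$ is the right way to produce the $\partial_{x_1}h_i$ source terms was on the money — Fourier decomposition in $x_1$ is exactly that — but you did not identify the coercivity-in-high-frequency mechanism nor the Ne\v{c}as step for the pressure trace.

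Your step (iii), invoking the Ventcel Carleman estimate of the appendix to "close $\mathcal B_1$", is misplaced. That interpolation estimate produces bounds of the multiplicative form $A^{1-\mu}B^\mu$ and is used in the paper solely for the \emph{velocity} component $U_2$ in \cref{Sec_spectral}; nowhere does the proof of \cref{Estimateonpressureterms} touch it. Trying to splice a geometric-mean interpolation bound into the additive accounting of a Carleman inequality would not work as stated: once the $\tau$-powers are fixed, there is no room to raise sub-expressions to a power $\mu<1$ and still absorb into the left-hand side. So the gap is real: you are missing both the frequency dichotomy that makes the boundary sign work and the elementary energy/Ne\v{c}as argument that controls the high-frequency traces.
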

We prove this theorem in the remainder of this section.

\subsection{A first estimate on the pressure}
In order to prove \cref{Estimateonpressureterms}, we exploit that $q$ satisfies the third equation of \eqref{ns2.1conj},
where $D_{\varphi}$ is defined by \eqref{defDphi}.
Since we do not have any boundary condition, we need to split the boundary value of $q$ into high and low frequencies. More precisely, for $\mathcal{Q}\in H^2(\Omega)$,
we introduce the Fourier coefficients of the trace of $\mathcal{Q}:$
\begin{equation}\label{ak}
a_k(\mathcal{Q}):=
\begin{bmatrix}
\frac{1}{2\pi} \int_0^{2\pi} \mathcal{Q}(x_1,0) e^{-i k x_1} \ dx_1
\\
\frac{1}{2\pi} \int_0^{2\pi} \mathcal{Q}(x_1,1) e^{-i k x_1} \ dx_1 
\end{bmatrix} \quad (k\in \mathbb{Z}).
\end{equation}
We then define the sets of low tangential frequencies and high tangential frequencies:
\[
\mathrm{LF}_{\tau}:=\{ k\in\mathbb Z,\; k^{2}\leq \frac{\tau^{2}}{2}\inf |\partial_{x_2}\varphi |^{2 } \},
\quad 
\mathrm{HF}_{\tau}:=\{ k\in\mathbb Z,\; k^{2}> \frac{\tau^{2}}{2}\inf |\partial_{x_2}\varphi |^{2 } \}.
\]
In the above definition, the infimum of $\partial_{x_2}\varphi$ is taken for $x\in \partial \Omega$ and $s\in [0,S_0]$.

Due to \eqref{weight2} and  \eqref{weight1}, we have
$$
\inf |\partial_{x_2}\varphi |=\lambda e^{-s_0^2}.
$$
\begin{Proposition}\label{Carlemanonpressureprop}
There exist $\lambda_0=\lambda_0(\widetilde \psi,S_0)>0$ and $\tau_0=\tau_0(\widetilde \psi,S_0)>0$ such
that for any $\lambda \geq \lambda_0$ and $\tau\geq \tau_0$, there exists $C=C(\lambda, \widetilde \psi,S_0)>0$
such that for any $s\in [0,S_0],$ and any $\mathcal Q\in H^{2}(\Omega)$,
\begin{multline}\label{estimatepressure00}
\tau^{3}\| \mathcal Q \|_{L^{2}(\Omega)}^{2} + \tau \| \nabla \mathcal Q \|^{2}_{L^{2}(\Omega)} 
+ \tau | \partial_{x_2}\mathcal Q|_{L^{2}(\partial \Omega)}^{2} 
+\sum_{k\in \mathrm{LF}_{\tau}}\tau ( \tau^{2}+k^2) |a_k(\mathcal Q) |^{2}\\
\leq C\left( \|D_{\varphi}\mathcal Q \|_{L^{2}(\Omega)}^{2}+\tau^{3} \| \mathcal Q \|^{2}_{L^{2}(\omega_0)} 
+ \tau \| \nabla \mathcal Q \|^{2}_{L^{2}(\omega_0)}   
+\sum_{k\in \mathrm{HF}_{\tau}}\tau ( \tau^{2}+k^2) |a_k(\mathcal Q) |^{2}\right).
\end{multline}
\end{Proposition}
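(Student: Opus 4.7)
My approach combines a Carleman estimate for $D_\varphi$ obtained without imposing any boundary condition on $\mathcal Q$ with a tangential Fourier decomposition of the trace on $\partial\Omega$.

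\textbf{Step 1 (Carleman estimate with free boundary).} I would split $D_\varphi = P_+ + P_-$ into its formally symmetric and antisymmetric components in $L^2(\Omega)$, and expand
\begin{equation*}
\|D_\varphi \mathcal Q\|^2_{L^2(\Omega)} = \|P_+\mathcal Q\|^2_{L^2(\Omega)} + \|P_-\mathcal Q\|^2_{L^2(\Omega)} + 2\bigl(P_+\mathcal Q, P_-\mathcal Q\bigr)_{L^2(\Omega)}.
\end{equation*}
The cross term, once unfolded via Rellich-type integrations by parts involving the Hessian of $\varphi$, produces a bulk quadratic form which is coercive for $\lambda\geq \lambda_0$ large enough, thanks to Hörmander's pseudoconvexity guaranteed by our assumption \eqref{weight41} that every critical point of $\widetilde \psi$ lies in the observation set $\omega_0$. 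Since $\mathcal Q$ is not assumed to satisfy any boundary condition, the same integrations by parts also generate boundary contributions on $\partial \Omega$. With our choice of weight, $-\partial_n\varphi = \lambda\varphi > 0$ on both $\Gamma_0$ and $\Gamma_1$, so that the normal-derivative contribution $\tau|\partial_{x_2}\mathcal Q|^2_{L^2(\partial\Omega)}$ and the top-power Dirichlet-trace contribution $\tau^3|\mathcal Q|^2_{L^2(\partial\Omega)}$ carry the right sign and can be moved to the left-hand side, while only the tangential-derivative contribution $\tau|\partial_{x_1}\mathcal Q|^2_{L^2(\partial\Omega)}$ must remain on the right. At the end of the step I should obtain, for $\tau\geq \tau_0$ and $\lambda\geq \lambda_0$,
\begin{multline*}
\tau^3\|\mathcal Q\|^2_{L^2(\Omega)} + \tau\|\nabla\mathcal Q\|^2_{L^2(\Omega)} + \tau|\partial_{x_2}\mathcal Q|^2_{L^2(\partial\Omega)} + \tau^3 |\mathcal Q|^2_{L^2(\partial\Omega)} \\
\leq C\Bigl(\|D_\varphi \mathcal Q\|^2_{L^2(\Omega)} + \tau^3\|\mathcal Q\|^2_{L^2(\omega_0)} + \tau\|\nabla\mathcal Q\|^2_{L^2(\omega_0)} + \tau|\partial_{x_1}\mathcal Q|^2_{L^2(\partial\Omega)}\Bigr),
\end{multline*}
with the $\lambda$-weights tracked precisely enough to be compatible with the absorption of Step 2.

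\textbf{Step 2 (Fourier splitting of the boundary trace).} Thanks to the $x_1$-periodicity, I expand the traces of $\mathcal Q$ on $\Gamma_0$ and $\Gamma_1$ in Fourier series (with coefficients $a_k(\mathcal Q)$) and rewrite
\begin{equation*}
\tau^3 |\mathcal Q|^2_{L^2(\partial\Omega)} \simeq \sum_{k\in\mathbb Z}\tau^3 |a_k(\mathcal Q)|^2, \qquad \tau|\partial_{x_1}\mathcal Q|^2_{L^2(\partial\Omega)} \simeq \sum_{k\in\mathbb Z} \tau k^2 |a_k(\mathcal Q)|^2.
\end{equation*}
Splitting $\sum_{k\in\mathbb Z} = \sum_{\mathrm{LF}_\tau} + \sum_{\mathrm{HF}_\tau}$, the very definition of $\mathrm{LF}_\tau$ ensures that, for $k\in\mathrm{LF}_\tau$, one has $\tau k^2 \leq \tfrac12 \tau^3\inf|\partial_{x_2}\varphi|^2$. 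Hence the low-frequency part of the right-hand boundary term coming from Step 1 is absorbed (for $\tau_0$ large, $\lambda_0$ suitably chosen) by the Dirichlet trace term sitting on the left-hand side. On the low-frequency range one furthermore has $\tau(\tau^2+k^2)|a_k|^2 \lesssim \tau^3|a_k|^2$, so the target quantity $\sum_{\mathrm{LF}_\tau}\tau(\tau^2+k^2)|a_k|^2$ can be added on the LHS without loss. On the high-frequency range, the reverse bound $\tau^3\leq C\tau k^2$ gives $\tau(\tau^2+k^2)|a_k|^2 \lesssim \tau k^2|a_k|^2$, which is exactly what remains on the right. Combining these bookkeeping inequalities produces \eqref{estimatepressure00}.

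\textbf{Main obstacle.} The delicate part is Step 1: without any boundary condition on $\mathcal Q$, every Rellich-type integration by parts produces boundary contributions that must be computed and organized carefully so that, after exploiting $-\partial_n\varphi>0$, only the good-sign terms $\tau^3|\mathcal Q|^2_{L^2(\partial\Omega)}$ and $\tau|\partial_{x_2}\mathcal Q|^2_{L^2(\partial\Omega)}$ survive on the left, while the single term $\tau|\partial_{x_1}\mathcal Q|^2_{L^2(\partial\Omega)}$ appears on the right with an explicit $\lambda$-weight compatible with the threshold $\tfrac{\tau^2}{2}\inf|\partial_{x_2}\varphi|^2$ separating $\mathrm{LF}_\tau$ from $\mathrm{HF}_\tau$. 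Equivalently, $\lambda_0$ must be chosen large enough both to guarantee the Hörmander bulk coercivity (that is independent of the frequency split) and to make the quantitative absorption in Step 2 consistent.
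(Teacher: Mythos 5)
Your proposal follows essentially the same route as the paper's proof: a symmetric/antisymmetric splitting of $D_\varphi$, Rellich integrations by parts on the cross term to get bulk coercivity under \eqref{weight41}, use of $-\partial_n\varphi>0$ to retain the good-sign boundary terms $\tau|\partial_{x_2}\mathcal Q|^2_{L^2(\partial\Omega)}$ and $\tau^3|\mathcal Q|^2_{L^2(\partial\Omega)}$ while the tangential term $-\tau|\partial_{x_1}\mathcal Q|^2_{L^2(\partial\Omega)}$ has the wrong sign, and then a tangential Fourier decomposition so that the low-frequency part of the tangential term is absorbed by the $\tau^3$ Dirichlet trace and the high-frequency part passes to the right-hand side. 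The paper organizes the last step slightly differently (it first groups the boundary terms as $\sum_k \tau\lambda\varphi_0\bigl(\tfrac34\tau^2\lambda^2\varphi_0^2-k^2\bigr)|a_k(\mathcal Q)|^2$ and then splits by the sign of the bracket), but that is the same bookkeeping you describe.
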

\begin{proof}
We can decompose the operator $D_{\varphi}$ (see \eqref{defDphi}) as follows
$D_{\varphi} = \mathcal S+\mathcal A+ \mathcal R$, where
$$
\mathcal S = -\Delta -\tau^{2}|\nabla\varphi|^{2},
\quad
\mathcal A = 2\tau\nabla\varphi\cdot\nabla +2\tau (\Delta \varphi ),
\quad
\mathcal R  =-\tau (\Delta \varphi ).
$$
Then, after some standard computation, we can obtain that
\begin{multline}\label{Lu23:07}
\int_\Omega (\mathcal S \mathcal Q) (\mathcal A \mathcal Q) dx 
=  
\tau \int_{\Omega}\left( 2\nabla^{2}\varphi(\nabla \mathcal Q,\nabla\mathcal Q) + \Delta\varphi |\nabla\mathcal Q|^{2}\right)  dx
+\tau^{3}\int_{\Omega}\left(  2\nabla^{2}\varphi( \nabla\varphi,\nabla\varphi)  - |\nabla\varphi|^{2} \Delta\varphi\right)|\mathcal Q|^{2}  dx \\
	-\tau\int_{\Omega}(\Delta^{2}\varphi) |\mathcal Q|^{2} dx
+\mathcal B,
\end{multline}
where 
$$
\nabla^2 \varphi=\left(\frac{\partial^2 \varphi}{\partial x_i\partial x_j}\right)_{i,j}
$$
and where
$\mathcal B$ corresponds to the boundary terms:
\begin{multline*}
\mathcal B=-2\tau \int_{\partial\Omega} \partial_{n}\mathcal Q (\nabla\varphi\cdot\nabla \mathcal Q) d\Gamma
	+\tau\int_{\partial\Omega} (\partial_{n}\varphi) | \nabla \mathcal Q|^{2}  d\Gamma
	-2\tau \int_{\partial\Omega} (\Delta\varphi)(\partial_{n}\mathcal Q)\mathcal Q d\Gamma 
\\ 
+\tau\int_{\partial\Omega} (\partial_{n}\Delta\varphi) \mathcal Q^{2}d\Gamma
-\tau^{3}\int_{\partial\Omega} |\nabla\varphi|^{2}(\partial_{n}\varphi) \mathcal Q^{2} d\Gamma.
\end{multline*}
Using \eqref{weight2}, we have that $\partial_{n}\varphi<0$ and we can simplify the above quantity:
\begin{multline*}
\mathcal B=\tau \int_{\partial\Omega} \left|\partial_{x_2}\varphi\right| \left(\partial_{x_2}\mathcal Q\right)^2  d\Gamma
	-\tau \int_{\partial\Omega} \left|\partial_{x_2}\varphi\right| \left(\partial_{x_1}\mathcal Q\right)^2  d\Gamma
	-2\tau \int_{\partial\Omega} (\partial_{x_2}^2\varphi)(n\cdot e_2)(\partial_{x_2}\mathcal Q)\mathcal Q d\Gamma 
\\ 
+\tau\int_{\partial\Omega} (\partial_{x_2}^3\varphi) (n\cdot e_2) \mathcal Q^{2}d\Gamma
+\tau^{3}\int_{\partial\Omega} \left|\partial_{x_2}\varphi\right|^3 \mathcal Q^{2} d\Gamma.
\end{multline*}
Combining the above relation with \eqref{weight1}, there exists $\tau_1=\tau_1(S_0)>0$ such that for any $\tau\geq \tau_1$, we have
\begin{multline}\label{Lu23:06}
\mathcal B\geq \frac12 \tau \lambda \varphi_0 \int_{\partial\Omega}  \left(\partial_{x_2}\mathcal Q\right)^2  d\Gamma
	-\tau \lambda \varphi_0  \int_{\partial\Omega} \left(\partial_{x_1}\mathcal Q\right)^2  d\Gamma
	+\frac34 \tau^{3}\lambda^3 \varphi_0^3 \int_{\partial\Omega}  \mathcal Q^{2} d\Gamma
	\\
	\geq \frac12 \tau \lambda \varphi_0 \int_{\partial\Omega}  \left(\partial_{x_2}\mathcal Q\right)^2  d\Gamma
	+2\pi \tau \lambda \varphi_0  \sum_{k\in \mathbb{Z}} \left(\frac34 \tau^{2}\lambda^2 \varphi_0^2 - k^2\right)|a_k(\mathcal{Q})|^2.
\end{multline}
Using \eqref{weight1} and \eqref{weight41}, there exist $C_1=C_1(\widetilde \psi)$, $C_2=C_2(\widetilde \psi)$, $\tau_2=\tau_2(S_0,\widetilde \psi)$ and 
$\lambda_1=\lambda_1(\widetilde \psi)$ such that for $\lambda \geq \lambda_1$
and $\tau\geq \tau_2$, 
\begin{multline}\label{21:40}
\tau \int_{\Omega}\left( 2\nabla^{2}\varphi(\nabla \mathcal Q,\nabla\mathcal Q) + \Delta\varphi |\nabla\mathcal Q|^{2}\right)  dx
+\tau^{3}\int_{\Omega}\left(  2\nabla^{2}\varphi( \nabla\varphi,\nabla\varphi)  - |\nabla\varphi|^{2} \Delta\varphi\right)|\mathcal Q|^{2}  dx
	-\tau\int_{\Omega}(\Delta^{2}\varphi) |\mathcal Q|^{2} dx
\\
\geq 
C_1\int_{\Omega} \left(\tau \lambda^2 \varphi \left| \nabla\mathcal Q \right|^2+\tau^3 \lambda^4 \varphi^3 \left| \mathcal Q \right|^2 \right) dx
-C_2\int_{\omega_0} \left(\tau \lambda^2 \varphi \left| \nabla\mathcal Q \right|^2+\tau^3 \lambda^4 \varphi^3 \left| \mathcal Q \right|^2 \right) dx.
\end{multline}

Finally, combining \eqref{Lu23:07}, \eqref{Lu23:06} and \eqref{21:40}, we deduce the existence of $\lambda_0>0$ and $\tau_0>0$ such
that for any $\lambda \geq \lambda_0$ and $\tau\geq \tau_0$, there exist $C_3=C_3(\lambda, \widetilde \psi,S_0)>0$
and $C_4=C_4(\lambda, \widetilde \psi,S_0)>0$ such that
\begin{multline*}
\|D_{\varphi}\mathcal Q \|_{L^{2}(\Omega)}^{2}  \geq  \frac 12 \| (\mathcal S +\mathcal A)\mathcal Q \|^{2}_{L^{2}(\Omega)} - \|\mathcal R \mathcal Q\|^{2}_{L^{2}(\Omega)} \geq \Re  \left( \mathcal S\mathcal Q,\mathcal A \mathcal Q \right)_{L^{2}(\Omega)}- \|\mathcal R \mathcal Q\|^{2}_{L^{2}(0,1)}\\ 
 \geq
C_3\left( \int_{\Omega} \left(\tau |\nabla \mathcal Q|^{2}+ \tau^{3}|\mathcal Q|^{2}\right) \ dx
+\tau \int_{\partial\Omega}  \left(\partial_{x_2}\mathcal Q\right)^2  d\Gamma
+ \tau \sum_{k\in \mathrm{LF}_{\tau}} (\tau^2+k^2) |a_k(\mathcal{Q})|^2\right)
\\
-C_4\left( \int_{\omega_0} \left(\tau |\nabla \mathcal Q|^{2}+ \tau^{3}|\mathcal Q|^{2}\right) \ dx
+\tau \sum_{k\in \mathrm{HF}_{\tau}}  k^2 |a_k(\mathcal{Q})|^2\right).
\end{multline*}
\end{proof}

Applying the above result to $\mathcal Q=q$ solution of \eqref{ns2.1conj} and integrating into $(0,S_0)$, 
we deduce that
\begin{multline}\label{estimatepressure0}
\tau^{3}\| q \|_{L^{2}(Z)}^{2} + \tau \| \nabla  q \|^{2}_{L^{2}(Z)} 
+ \tau | \partial_{x_2} q|_{L^{2}((0,S_0)\times \partial \Omega)}^{2} 
+\sum_{k\in \mathrm{LF}_{\tau}}\tau ( \tau^{2}+k^2) |a_k(q) |_{L^2(0,S_0)}^{2}\\
\leq C\left( \tau^{3} \| q \|^{2}_{L^{2}((0,S_0)\times \omega_0)} 
+ \tau \| \nabla q \|^{2}_{L^{2}((0,S_0)\times \omega_0)}   
+\sum_{k\in \mathrm{HF}_{\tau}}\tau ( \tau^{2}+k^2) |a_k(q) |_{L^2(0,S_0)}^{2}\right).
\end{multline}
We recall that $q(s,\cdot)\equiv 0$ if $|s-s_{0}| \geq S_{0}/6$ due to the support of $\chi$ (see \eqref{defchi}).
 Next, we will estimate the high tangential frequencies of the pressure.

\subsection{Estimates in the high frequency regime} 
We define
$$
Y:=(0,S_{0})\times (0,1), \quad 
I_{i}:=(0,S_{0})\times\{i\}, \ i=0,1.
$$
We recall that $(u,p)$ is defined by \eqref{defup} and $(f_1,f_2,f_3)$ is defined by \eqref{deff123}.
We define $(u^k,p^k,f_1^k,f_2^k,f_3^k)$ the Fourier coefficients of $(u,p,f_1,f_2,f_3)$ in the $x_1$ direction. For instance
$$
u^k(s,x_2):=\frac{1}{2\pi} \int_0^{2\pi} u(s,x_1,x_2) e^{-i k x_1} \ dx_1 \quad ((s,x_2)\in Y).
$$
Finally, with $\tau>0$ and $\varphi_0$ defined by \eqref{weight1}, we set
\begin{equation}\label{weight4}
w^k=e^{\tau \varphi_{0}} u^k, \quad \pi^k=e^{\tau \varphi_{0}} p^k, \quad h_{i}^k=e^{\tau \varphi_{0}} f_{i}^k \quad (i=1,2,3).
\end{equation}
Note that $h_i^k$ are the Fourier coefficients of the functions $h_i$ defined by \eqref{weight5}.
Since  $\varphi_{0}$ only depends on $s$, and using \eqref{weight1}, \eqref{weight3}, we have
\begin{equation}\label{ns6.6}
a_k(q)=
\begin{bmatrix}
(\pi^k)_{|I_0}
\\
(\pi^k)_{|I_1}
\end{bmatrix}.
\end{equation}
Let us define the following conjugated operators:
\begin{align*}
Q_{k,\varphi_{0}} & := e^{\tau\varphi_{0}}(-\partial_{s}^{2}-\partial_{x_2}^{2}+k^{2})e^{-\tau\varphi_{0}} 
= -\partial_{s}^{2}-\partial_{x_2}^{2}+k^{2} +2\tau \varphi_{0}'\partial_{s}
	-\tau^{2}(\varphi_{0}')^{2} +\tau \varphi_{0}'',
	\\
S_{k,\varphi_{0}} &:= e^{\tau\varphi_{0}} ( -\partial_{s}^{2}+k^{2})e^{-\tau\varphi_{0}}
= -\partial_{s}^{2}+k^{2} +2\tau \varphi_{0}'\partial_{s}
	-\tau^{2}(\varphi_{0}')^{2} +\tau \varphi_{0}''.
\end{align*}
Then, for $k\in \mathbb{Z}$, \eqref{ns2.1.0} transforms into
\begin{equation}\label{ns3.1conj}
\left\{\begin{array}{l}
Q_{k,\varphi_{0} } w_1^k + ik \pi^k = h_{1}^{k} \quad \text{in} \ Y,\\
Q_{k,\varphi_{0}} w_2^k +  \partial_{x_2} \pi^k = h_{2}^{k} \quad \text{in} \ Y,\\
\div_{k} w^{k}= 0 \quad \text{in} \ Y,\\
w_1^k=0 \quad \text{on} \ I_{0}\cup I_{1},\\
w_2^k=0 \quad \text{on} \ I_{0},\\
S_{k,\varphi} w_2^k = h_{3}^{k}+ \pi^k \quad \text{on} \ I_{1},
\end{array}
\right.
\end{equation}
where 
$$
\div_{k} \begin{bmatrix}
f_1\\ f_2
\end{bmatrix}
=ik f_1+\partial_{x_2} f_2.
$$
The relation \eqref{ns2.1.1} yields
\begin{equation}\label{ns2.1.1k}
w^k=0 \quad \text{and} \quad \pi^k=0 \quad \text{if} \ s\notin \left[\frac{1}{3}S_0,\frac{2}{3}S_0\right].
\end{equation}

\begin{Proposition}\label{PropositionHF}
If the solution of \eqref{ns3.1conj} satisfies \eqref{ns2.1.1k}, then there 
exist $\lambda_3=\lambda_3(S_0)>0$ and $C(S_0)>0$ such that for any $\lambda\geq \lambda_3$ and $k\in \mathrm{HF}_{\tau}$,
$$
 \| \pi^{k} \|_{L^{2}(I_0\cup I_1)} \leq \frac{C}{(k^2+\tau^2)^{1/4}} \left(
\left\|h_1^{k}\right\|_{L^2(Y)}
+\left\|h_2^{k}\right\|_{L^2(Y)}
+\left|h_3^{k}\right|_{L^2(I_1)}\right).
$$
\end{Proposition}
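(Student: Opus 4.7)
The plan is to combine a weighted energy identity for \eqref{ns3.1conj} with the two trace relations available for the pressure on $I_0$ and $I_1$. Throughout, the high-frequency hypothesis $k^2>\tfrac{\tau^2}{2}\inf|\partial_{x_2}\varphi|^2$ lets me treat $k^2+\tau^2$ as a single large parameter and absorb the zero-order weighted perturbations produced by conjugation.

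First, I would test the first two equations of \eqref{ns3.1conj} against $\overline{w_1^k}$ and $\overline{w_2^k}$ respectively and add the identities after integration over $Y$. Using \eqref{ns2.1.1k} to kill the $s$-boundary contributions, together with $w_1^k|_{I_0\cup I_1}=0$, $w_2^k|_{I_0}=0$ and the divergence constraint $ikw_1^k+\partial_{x_2}w_2^k=0$, the interior pressure term disappears and only the single boundary integral $\int_{I_1}\pi^k\,\overline{w_2^k}\,ds$ survives. Replacing $\pi^k$ there by $S_{k,\varphi_0}w_2^k-h_3^k$ and integrating by parts once in $s$ turns this term into the coercive quadratic form $\int_{I_1}(|\partial_s w_2^k|^2+k^2|w_2^k|^2)\,ds$ up to $\tau$-cross-terms that are absorbable by Cauchy--Schwarz for large $\tau$. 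The outcome is
\begin{multline*}
\|\partial_s w^k\|_{L^2(Y)}^2 + \|\partial_{x_2}w^k\|_{L^2(Y)}^2 + (k^2+\tau^2)\|w^k\|_{L^2(Y)}^2 + |\partial_s w_2^k|_{L^2(I_1)}^2 + (k^2+\tau^2)|w_2^k|_{L^2(I_1)}^2 \\
\lesssim \|h_1^k\|_{L^2(Y)}^2 + \|h_2^k\|_{L^2(Y)}^2 + |h_3^k|_{L^2(I_1)}^2.
\end{multline*}

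I would then recover the pressure traces one side at a time. On $I_1$, the Ventcel relation $\pi^k=S_{k,\varphi_0}w_2^k-h_3^k$ gives the bound directly; the sharp $(k^2+\tau^2)^{-1/4}$ factor arises by interpolating the $L^2(I_1)$ and $H^1(I_1)$ bounds on $w_2^k$ contained in the above estimate. On $I_0$, I use that $w_1^k$ and all its $s$-derivatives vanish on $I_0$, so the first momentum equation restricted there collapses to $ik\pi^k|_{I_0}=h_1^k|_{I_0}+\partial_{x_2}^2 w_1^k|_{I_0}$. The factor $1/|k|\lesssim(k^2+\tau^2)^{-1/2}$ is immediate, and to control the normal-derivative trace $\partial_{x_2}^2 w_1^k|_{I_0}$ I would exploit the fact that $\pi^k$ solves the ODE $(-\partial_{x_2}^2+k^2)\pi^k=0$ in $x_2$ (coming from \eqref{ns2.2} and the $x_2$-independence of $\varphi_0$), which provides an explicit Dirichlet-to-Neumann representation of the trace in terms of interior $L^2$ quantities already controlled.

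The step I expect to be most delicate is this trace estimate on $I_0$: unlike the Ventcel side, no boundary-valued identity for $\pi^k$ is directly available there, and the only a priori handle requires a second-order normal-derivative trace of $w_1^k$. Leveraging the harmonicity of $\pi^k$ in $(x_1,x_2)$ (the aforementioned ODE) together with the energy estimate is the key mechanism by which the announced $(k^2+\tau^2)^{-1/4}$ gain propagates to the $I_0$ trace, and summing the two contributions then yields the proposition.
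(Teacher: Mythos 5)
Your plan does correctly identify the first step of the paper's proof (the basic energy identity obtained by pairing the momentum equations and the Ventcel equation with $\overline{w^k}$, killing the pressure by the divergence constraint and the $s$-compact support \eqref{ns2.1.1k}, and absorbing the conjugation zero-order terms in the high-frequency regime), but the estimate you write down is a factor of $(k^{2}+\tau^{2})$ weaker than the paper's \eqref{ns5.1}: after absorbing the $\tau^{2}(\varphi_0')^{2}$ terms, a sharp Cauchy--Schwarz on the sources gives an extra $(\tau^{2}+k^{2})^{-1}$ on the right, so one really obtains $(\tau^{2}+k^{2})\|\partial_{s}w^{k}\|^{2}+(\tau^{2}+k^{2})\|\partial_{x_2}w^{k}\|^{2}+(\tau^{2}+k^{2})^{2}\|w^{k}\|^{2}+\cdots\lesssim \sum\|h_i^{k}\|^{2}$. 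This extra factor is not cosmetic: it is exactly what is later needed so that $k\|w^{k}\|$ and $\|\partial_{x_2}w^{k}\|$ are $O(M/k)$ rather than $O(M)$.

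The genuine gap, though, is that your proposal never establishes the interior pressure bound $k\|\pi^{k}\|_{L^{2}(Y)}\lesssim M$, which is the real content of the proposition. Your ODE/Dirichlet-to-Neumann observation is sound as a trace step: since $(-\partial_{x_2}^{2}+k^{2})\pi^{k}=0$, one indeed has $\|\pi^{k}\|_{L^{2}(I_{0}\cup I_{1})}^{2}\lesssim |k|\,\|\pi^{k}\|_{L^{2}(Y)}^{2}$ (and this can even replace the paper's trace inequality plus the separate bound on $\partial_{x_2}\pi^{k}$). But this only \emph{reduces} the boundary estimate to an interior one; it does not \emph{produce} it. The energy identity gives no direct control on $\pi^{k}$, precisely because the divergence constraint removes the pressure from the quadratic form. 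The paper fills this gap with a chain of a priori estimates: peel off the conjugation remainders (\eqref{ns5.2}), obtain the second-order $s$-estimate by testing against $-\partial_{s}^{2}w^{k}$ (\eqref{ns5.3}), rewrite the system as \eqref{ns3.3conj} with right-hand sides $\widetilde H_{i}^{k}$ controlled by $M$ (\eqref{ns6.0}), and then invoke the Ne\v{c}as inequality on $\nabla(e^{ikx_{1}}\pi^{k})$ to get $k\|\pi^{k}\|_{L^{2}(Y)}\lesssim M$ (\eqref{ns6.4}). None of these ingredients appears in your plan.

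The two per-side arguments you propose as substitutes also do not deliver the claimed gain. On $I_{1}$, the Ventcel relation $\pi^{k}=S_{k,\varphi_{0}}w_{2}^{k}-h_{3}^{k}$ has leading terms $-\partial_{s}^{2}w_{2}^{k}$ and $k^{2}w_{2}^{k}$; even with the sharper \eqref{ns5.1} and the second-order estimate \eqref{ns5.3}, both terms are only $O(M)$ in $L^{2}(I_{1})$, and no interpolation between the $L^{2}(I_{1})$ and $H^{1}(I_{1})$ bounds on $w_{2}^{k}$ can lower a second-order derivative of $w_{2}^{k}$ to the order $(k^{2}+\tau^{2})^{-1/4}M$. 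On $I_{0}$, the identity $ik\,\pi^{k}|_{I_{0}}=h_{1}^{k}|_{I_{0}}+\partial_{x_2}^{2}w_{1}^{k}|_{I_{0}}$ is circular as an estimating tool: $h_{1}^{k}$ is an $L^{2}(Y)$ datum with no a priori controlled trace, and $\partial_{x_2}^{2}w_{1}^{k}|_{I_{0}}$ is a normal trace two orders beyond what the energy identity controls; invoking the pressure ODE does not bound it, because the ODE concerns $\pi^{k}$, not $w_{1}^{k}$. In short, before the final trace step you need the interior pressure estimate, and the route to it (Ne\v{c}as after cleaning up the right-hand sides) is the missing idea.
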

\begin{proof}
We multiply the first line \eqref{ns3.1conj} by $w_1^{k}$, the second line \eqref{ns3.1conj} by $w_2^{k}$, and the last line \eqref{ns3.1conj} by $w_2^{k}$.
Integrating by parts and summing up yield
\begin{multline}\label{ns5.0}
\int_{Y} |\partial_{s}w^{k}|^{2} \ dy
+ \int_{Y} |\partial_{x_2}w^{k} |^{2} \ dy
+k^{2}\int_{Y}|w^{k}|^{2} \ dy 
- \tau^{2}\int_{Y}(\varphi_{0}')^{2} |w^{k}|^{2} \ dy 
\\
+  \int_{I_{1}} |\partial_{s}w_{2}^{k}|^{2} \ ds
+  k^{2}\int_{I_{1}} |w_{2}^{k}|^{2} \ ds
- \tau^{2}\int_{I_{1}} (\varphi_{0}')^{2} |w_{2}^{k}|^{2} \ ds
\\
= \Re \int_{Y} h_1^{k}\overline{w_1^{k}} \ dy
+\Re \int_{Y} h_2^{k}\overline{w_2^{k}} \ dy
+\Re \int_{I_{1}} h_{3}^{k} \overline{w^{k}_{2}} \ ds.
\end{multline}
Now, since $k\in \mathrm{HF}_{\tau}$, we have
$$
k^{2}> \frac{\tau^{2}}{2}\inf |\partial_{x_2}\varphi |^{2 } =\frac{\tau^{2}}{2} \lambda^2 e^{-2 s_0^2}.
$$
On the other hand, 
$$
\sup_{[0,S_0]} |\varphi_0'| \leq S_0.
$$
From the two previous relations, we deduce the existence of $\lambda_3=\lambda_3(S_0)>0$ such that for $\lambda\geq \lambda_3$
and for $k\in \mathrm{HF}_{\tau}$
\[
k^{2}-\tau^{2}|\varphi_{0}' |^{2} \geq \frac{1}{2}(\tau^{2}+k^{2} ).
\]
Combining the above relation and \eqref{ns5.0} yields
\begin{multline}\label{ns5.1}
(\tau^{2}+k^{2} )\left\|\begin{bmatrix}
\partial_{s}w^{k}, \partial_{x_2}w^{k}
\end{bmatrix}\right\|_{L^2(Y)}^{2}
+(\tau^{2}+k^{2} )^2 \left\|w^{k}\right\|_{L^2(Y)}^{2}
+(\tau^{2}+k^{2} )\left|\partial_{s}w_{2}^{k}\right|_{L^2(I_1)}^{2}
+(\tau^{2}+k^{2} )^2 \left|w_2^{k}\right|_{L^2(I_1)}^{2}
\\
\lesssim
\left\|h_1^{k}\right\|_{L^2(Y)}^{2}
+\left\|h_2^{k}\right\|_{L^2(Y)}^{2}
+\left|h_3^{k}\right|_{L^2(I_1)}^{2}.
\end{multline}

Now, we write \eqref{ns3.1conj} under the form
\begin{equation}\label{ns3.1conj2}
\left\{\begin{array}{l}
\left(-\partial_{s}^{2}-\partial_{x_2}^{2}+k^{2}\right) w_1^k + ik \pi^k = H_{1}^{k} \quad \text{in} \ Y,\\
\left(-\partial_{s}^{2}-\partial_{x_2}^{2}+k^{2}\right) w_2^k +  \partial_{x_2} \pi^k = H_{2}^{k} \quad \text{in} \ Y,\\
\div_{k} w^{k}= 0 \quad \text{in} \ Y,\\
w_1^k=0 \quad \text{on} \ I_{0}\cup I_{1},\\
w_2^k=0 \quad \text{on} \ I_{0},\\
\left( -\partial_{s}^{2}+k^{2}\right) w_2^k = H_{3}^{k}+ \pi^k \quad \text{on} \ I_{1},
\end{array}
\right.
\end{equation}
where
\begin{align*}
H_1^k& := -2\tau \varphi_{0}'\partial_{s}w_1^k +\tau^{2}(\varphi_{0}')^{2}w_1^k  -\tau \varphi_{0}''w_1^k+h_{1}^{k},
	\\
H_2^k& := -2\tau \varphi_{0}'\partial_{s}w_2^k +\tau^{2}(\varphi_{0}')^{2}w_2^k  -\tau \varphi_{0}''w_2^k+h_{2}^{k},
	\\
H_3^k&:= -2\tau \varphi_{0}'\partial_{s}w_2^k +\tau^{2}(\varphi_{0}')^{2}w_2^k  -\tau \varphi_{0}''w_2^k++h_{3}^{k}.
\end{align*}
From \eqref{ns5.1}, we deduce
\begin{equation}\label{ns5.2}
\left\|H_1^{k}\right\|_{L^2(Y)}
+\left\|H_2^{k}\right\|_{L^2(Y)}
+\left|H_3^{k}\right|_{L^2(I_1)}
\lesssim
\left\|h_1^{k}\right\|_{L^2(Y)}
+\left\|h_2^{k}\right\|_{L^2(Y)}
+\left|h_3^{k}\right|_{L^2(I_1)}.
\end{equation}
We multiply the first line \eqref{ns3.1conj2} by $-\partial_s^2 w_1^{k}$, the second line \eqref{ns3.1conj2} by $-\partial_s^2 w_2^{k}$, and the last line \eqref{ns3.1conj2} by $-\partial_s^2 w_2^{k}$.
Integrating by parts, summing up and using \eqref{ns5.2} yield
\begin{multline}\label{ns5.3}
\left\|\partial_{s}^2 w^{k}\right\|_{L^2(Y)}^{2}
+\left\|\partial_s\partial_{x_2} w^{k}\right\|_{L^2(Y)}^{2}
+k^{2} \left\| \partial_sw^{k}\right\|_{L^2(Y)}^{2}
+\left|\partial_{s}^2 w_{2}^{k}\right|_{L^2(I_1)}^{2}
+k^{2} \left| \partial_{s}w_2^{k}\right|_{L^2(I_1)}^{2}
\\
\lesssim
\left\|h_1^{k}\right\|_{L^2(Y)}^{2}
+\left\|h_2^{k}\right\|_{L^2(Y)}^{2}
+\left|h_3^{k}\right|_{L^2(I_1)}^{2}.
\end{multline}
Next, we write \eqref{ns3.1conj2} under the form
\begin{equation}\label{ns3.3conj}
\left\{\begin{array}{l}
\left(-\partial_{x_2}^{2}+k^{2}\right) w_1^k + ik \pi^k = \widetilde H_{1}^{k} \quad \text{in} \ Y,\\
\left(-\partial_{x_2}^{2}+k^{2}\right) w_2^k +  \partial_{x_2} \pi^k = \widetilde H_{2}^{k} \quad \text{in} \ Y,\\
\div_{k} w^{k}= 0 \quad \text{in} \ Y,\\
w_1^k=0 \quad \text{on} \ I_{0}\cup I_{1},\\
w_2^k=0 \quad \text{on} \ I_{0},\\
k^{2} w_2^k = \widetilde H_{3}^{k}+ \pi^k \quad \text{on} \ I_{1},
\end{array}
\right.
\end{equation}
where
$$
\widetilde H_1^k:= H_1^k+\partial_{s}^{2}w_1^k, \quad
\widetilde H_2^k:= H_2^k+\partial_{s}^{2}w_2^k, \quad
\widetilde H_3^k:= H_3^k+\partial_{s}^{2}w_1^k.
$$
From \eqref{ns5.2} and \eqref{ns5.3}, we deduce
\begin{equation}\label{ns6.0}
\left\|\widetilde H_1^{k}\right\|_{L^2(Y)}
+\left\|\widetilde H_2^{k}\right\|_{L^2(Y)}
+\left|\widetilde H_3^{k}\right|_{L^2(I_1)}
\lesssim
\left\|h_1^{k}\right\|_{L^2(Y)}
+\left\|h_2^{k}\right\|_{L^2(Y)}
+\left|h_3^{k}\right|_{L^2(I_1)}.
\end{equation}

The first two lines of \eqref{ns3.3conj} can be written as
\begin{equation}\label{ns6.1}
\nabla \left(e^{ikx_1} \pi^k\right)= e^{ikx_1} \begin{bmatrix}
\widetilde H_{1}^{k}\\\widetilde H_{2}^{k}
\end{bmatrix}+\Delta \left(e^{ikx_1} w^k\right)
\end{equation}
and thus
\begin{equation}\label{ns6.2}
\left\| \nabla \left(e^{ikx_1} \pi^k\right) \right\|_{H^{-1}(\Omega)}\leq 
	\left\| e^{ikx_1} \begin{bmatrix}\widetilde H_{1}^{k}\\\widetilde H_{2}^{k}\end{bmatrix} 
	\right\|_{H^{-1}(\Omega)}
	+
	\left\| \nabla \left(e^{ikx_1} w^k\right) \right\|_{L^{2}(\Omega)} 
\end{equation}
and using that $k\neq 0$, we deduce from the above estimate that
\begin{equation}\label{ns6.3}
\left\|  e^{ikx_1} \pi^k\right\|_{H^{-1}(\Omega)}\leq 
	\left\| e^{ikx_1} \begin{bmatrix}\widetilde H_{1}^{k}\\\widetilde H_{2}^{k}\end{bmatrix} 
	\right\|_{H^{-1}(\Omega)}
	+
	\left\| \nabla \left(e^{ikx_1} w^k\right) \right\|_{L^{2}(\Omega)}.
\end{equation}
Combining \eqref{ns6.2} and \eqref{ns6.3} with the Ne\v{c}as inequality (see, for instance, \cite[p.231, Theorem IV.1.1]{BoyerFabrie}), 
we deduce that
\begin{equation}
k\left\|  e^{ikx_1} \pi^k\right\|_{L^2(\Omega)}\lesssim
	\left\|\widetilde H_1^{k}\right\|_{L^2(0,1)}
	+\left\|\widetilde H_2^{k}\right\|_{L^2(0,1)}
	+k\left\| \nabla \left(e^{ikx_1} w^k\right) \right\|_{L^{2}(\Omega)}
\end{equation}
and thus, with \eqref{ns6.0} and \eqref{ns5.1}, 
\begin{equation}\label{ns6.4}
k\left\|  \pi^k\right\|_{L^2(Y)}\lesssim
\left\|h_1^{k}\right\|_{L^2(Y)}
+\left\|h_2^{k}\right\|_{L^2(Y)}
+\left|h_3^{k}\right|_{L^2(I_1)}.
\end{equation}

On the other hand, differentiating the divergence equation of system \eqref{ns3.3conj} with respect to $x_2$ and using \eqref{ns5.1},  \eqref{ns6.0}  yield
\[
\|\partial_{x_2}^{2} w^{k}_{2} \|_{L^{2}(Y)}  + \|\partial_{x_2} \pi^{k} \|_{L^{2}(Y)} \lesssim
\left\|h_1^{k}\right\|_{L^2(Y)}
+\left\|h_2^{k}\right\|_{L^2(Y)}
+\left|h_3^{k}\right|_{L^2(I_1)}.
\]
Then, combining the above relation with \eqref{ns6.4} and with a trace inequality, we deduce 
$$
 \| \pi^{k} \|_{L^{2}(I_0\cup I_1)} \lesssim\frac{1}{k^{1/2}} \left(
\left\|h_1^{k}\right\|_{L^2(Y)}
+\left\|h_2^{k}\right\|_{L^2(Y)}
+\left|h_3^{k}\right|_{L^2(I_1)}\right).
$$
Using that $k\in \mathrm{HF}_{\tau}$, we deduce the result.
\end{proof}
\subsection{Proof of \cref{Estimateonpressureterms}}
From \cref{PropositionHF} and from \eqref{ns6.6}, for $k\in \mathrm{HF}_{\tau}$
$$
( \tau^{2}+k^2) \left| a^{k}(q) \right|_{L^{2}(0,S_0)}^2 \lesssim
k^2 \left(
\left\|h_1^{k}\right\|_{L^2(Y)}^2
+\left\|h_2^{k}\right\|_{L^2(Y)}^2
+\left|h_3^{k}\right|_{L^2(I_1)}^2\right)
$$
and thus, with the Parceval formula, 
$$
\sum_{k\in \mathrm{HF}_{\tau}} ( \tau^{2}+k^2) |a_k(q) |_{L^2(0,S_0)}^{2}
\lesssim 
\left\| \partial_{x_1} h_1\right\|_{L^2(Z)}^2
+\left\| \partial_{x_1} h_2\right\|_{L^2(Z)}^2
+\left| \partial_{x_1} h_3\right|_{L^2(J_1)}^2,
$$
where we have used \eqref{weight4} and \eqref{weight5}.

Combining this estimate with \eqref{estimatepressure0} we finally obtain the sought result.
\begin{flushright}
$\qed$
\end{flushright}

\section{Proof of the spectral inequality}\label{Sec_spectral}
The proof of the spectral inequality, that is \eqref{ns4.1} is based on 
interpolation estimates. More precisely, it will be a consequence of \cref{globalinterpolation} stated below.
In order to show such a result, we first recall some interpolation inequalities available in the literature and then we combine them with the global pressure estimates, that is \cref{Estimateonpressureterms} to show \cref{globalinterpolation}. The last part of this section is devoted to the proof of the spectral inequality from the interpolation inequality.

First, we need the following notation for this section:
\begin{equation}\label{calO}
\mathcal O_0:=\left(s_0- \frac{S_{0}}{6},s_0+ \frac{S_{0}}{6}\right) \times\omega_0,
\quad
\mathcal O:=\left(s_0- \frac{S_{0}}{5},s_0+ \frac{S_{0}}{5}\right) \times\omega,
\end{equation}
\begin{equation}\label{Ztilde}
\widetilde{Z}:=\left(s_0- \frac{S_{0}}{10},s_0+ \frac{S_{0}}{10}\right) \times\Omega,
\quad
\widetilde{J}_1:=\left(s_0- \frac{S_{0}}{10},s_0+ \frac{S_{0}}{10}\right) \times \Gamma_1,
\end{equation}
\begin{equation}\label{Zhat}
\widehat{Z}:=\left(s_0- \frac{S_{0}}{9},s_0+ \frac{S_{0}}{9}\right) \times\Omega,
\quad
\widehat{J}_1:=\left(s_0- \frac{S_{0}}{9},s_0+ \frac{S_{0}}{9}\right) \times \Gamma_1.
\end{equation}
Note that $\widetilde{Z} \subset \widehat{Z}$ and $\widetilde{J}_1 \subset \widehat{J}_1$.

\subsection{Estimates on the velocity}
The two components of the velocity satisfy different boundary conditions (see \eqref{ns2.0.1}).
We start by an estimate on the first component $U_{1}$ that satisfies homogeneous Dirichlet boundary conditions.
For the proof of this result, we refer to relation $(1)$ in Section 3 of \cite{LebeauRobbiano}.
\begin{Theorem}\label{Carlemansurv1}
There exist $C_{1}>0$ and $\mu_{1}\in(0,1)$ such that 
for all $w\in H^{2}(Z)$ such that $w_{|_{J_{0}\cup J_{1}}}=0$,
\begin{equation}\label{Interpolationsurv1}
\left\| w \right\|_{H^{1}\left(\widetilde Z \right)} 
	\leq C_{1} \left\| w \right\|_{H^{1}\left(\widehat Z\right)}^{1-\mu_{1}} 
		\left( \left\| \Delta_{z} w \right\|_{L^{2}\left(\widehat Z\right)} + \left\| w \right\|_{L^{2}\left(\mathcal O_0\right)} 
		\right)^{\mu_{1}}.
\end{equation}
\end{Theorem}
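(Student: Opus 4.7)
The plan is to apply the classical Lebeau--Robbiano interpolation argument, deducing this estimate from a local Carleman inequality for the Laplacian $\Delta_z$ on $\widehat Z$. The Dirichlet condition $w|_{J_0 \cup J_1}=0$ plays the role of eliminating boundary contributions that cannot otherwise be controlled. The key preliminary step is the construction of a Carleman weight $\varphi = e^{\lambda \psi}$ on $\widehat Z$ where $\psi \in C^\infty(\overline{\widehat Z};\mathbb R_+)$ has no critical points outside $\mathcal O_0$ (a standard Morse-type construction in the product geometry of $\widehat Z$), and whose sub-level sets nest so that for some intermediate set $\widetilde Z \subset \widetilde Z' \subset \widehat Z$ one has
$$\beta := \inf_{\widetilde Z} \psi \;>\; \alpha := \sup_{\widehat Z \setminus \widetilde Z'} \psi, \qquad \Phi_{\max} := \sup_{\widehat Z} \psi.$$
Near $J_0 \cup J_1$, the weight should be chosen as in Section 4.1 (behaving like $x_2$ near $\Gamma_0$ and like $1-x_2$ near $\Gamma_1$), so that $\partial_n \varphi \le 0$ on these faces.

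For such a weight, a standard H\"ormander sub-ellipticity computation yields, for $\lambda$ and $\tau$ large enough and for every $v \in H^2(\widehat Z)$ with $v = 0$ on $J_0 \cup J_1$ and compactly supported in $s$ inside $\widehat Z$, the Carleman inequality
$$\tau^3 \int_{\widehat Z} e^{2\tau\varphi}|v|^2\,dz + \tau \int_{\widehat Z} e^{2\tau\varphi}|\nabla_z v|^2\,dz \;\lesssim\; \int_{\widehat Z} e^{2\tau\varphi}|\Delta_z v|^2\,dz + \tau^3 \int_{\mathcal O_0} e^{2\tau\varphi}|v|^2\,dz.$$
Boundary contributions on $J_0 \cup J_1$ that survive the Dirichlet trace reduce to $-\int_{J_0 \cup J_1} \partial_n\varphi \,(\partial_n v)^2\,d\Gamma$, which has the favourable sign and can be dropped. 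I would then apply this to $v = \chi w$, with $\chi \in C^\infty_c(\widehat Z;[0,1])$ equal to $1$ on $\widetilde Z$ and supported in $\widetilde Z'$. Writing $\Delta_z(\chi w) = \chi\,\Delta_z w + [\Delta_z,\chi]w$ with $[\Delta_z,\chi]w$ supported in $\widehat Z \setminus \widetilde Z$, and using $\varphi \ge \beta$ on $\widetilde Z$ together with $\varphi \le \alpha$ on $\mathrm{supp}(\nabla\chi)$, one obtains
$$e^{2\tau\beta}\|w\|_{H^1(\widetilde Z)}^2 \;\lesssim\; e^{2\tau\Phi_{\max}}\bigl(\|\Delta_z w\|_{L^2(\widehat Z)}^2 + \|w\|_{L^2(\mathcal O_0)}^2\bigr) + e^{2\tau\alpha}\|w\|_{H^1(\widehat Z)}^2.$$

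Finally, setting $A := \|\Delta_z w\|_{L^2(\widehat Z)} + \|w\|_{L^2(\mathcal O_0)}$ and $B := \|w\|_{H^1(\widehat Z)}$ and optimizing in $\tau \ge \tau_0$ by balancing the two terms on the right (the case $B \le A$ being trivial), a standard computation yields
$$\|w\|_{H^1(\widetilde Z)} \;\le\; C\, B^{1-\mu_1} A^{\mu_1}, \qquad \mu_1 = \frac{\beta - \alpha}{\Phi_{\max} - \alpha} \in (0,1),$$
which is the desired inequality. The principal technical obstacle is the design of the weight: one must simultaneously avoid critical points outside $\mathcal O_0$, arrange strict separation of sub-level sets in the slab geometry (in both $s$ and $x$ directions), and preserve $\partial_n \varphi \le 0$ on $J_0 \cup J_1$. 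This is precisely what the construction of $\widetilde \psi$ and the Gaussian-in-$s$ factor in Section 4.1 are tailored to achieve, and the same construction can be reused here.
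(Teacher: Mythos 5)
The paper does not carry out this argument itself; it is invoked from Section 3 of \cite{LebeauRobbiano}, where the interpolation inequality is obtained by patching \emph{local} Carleman estimates (spherical weights centered at points just outside $\overline Z$ for the boundary estimates) followed by a propagation argument, exactly as the paper does in Appendix A for the Ventcel case. Your proposal takes a single-global-weight route instead. That strategy is not unreasonable, but the weight you specify cannot exist, so as written the proof has a genuine gap.

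You require $\psi\in C^\infty(\overline{\widehat Z};\mathbb R_+)$ that vanishes on $J_0\cup J_1$ (behaving like $x_2$ near $\Gamma_0$ and $1-x_2$ near $\Gamma_1$, so that $\partial_n\varphi\le 0$ there), and simultaneously the separation $\beta=\inf_{\widetilde Z}\psi>\alpha=\sup_{\widehat Z\setminus\widetilde Z'}\psi$. These are incompatible: $\widetilde Z$, $\widetilde Z'$, $\widehat Z$ are all cylinders $I\times\Omega$ over nested $s$-intervals, so both $\widetilde Z$ and $\widehat Z\setminus\widetilde Z'$ reach $J_0\cup J_1$, where $\psi=0$; hence $\beta=0$ while $\alpha>0$, and $\beta>\alpha$ fails. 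Reusing the Section 4.1 weight does not repair this: there $\varphi=e^{\lambda\widetilde\psi(x)-(s-s_0)^2}$, whose $s$-decay is $O(1)$ while the $x$-contribution is $O(\lambda)$, so once $\lambda$ is taken large, as the Carleman sub-ellipticity requires, the separation in $s$ collapses. (Note also that in Section 4.1 that weight is used slice-by-slice in $s$, i.e.\ it drives a two-dimensional Carleman estimate in $x$ for each fixed $s$, not a three-dimensional one in $(s,x)$; it was not built for the present purpose.) To salvage a global weight one would need the $s$-decay to scale with $\lambda$, for instance $\varphi=e^{\lambda(\widetilde\psi(x)+\rho(s))}$ with $\rho$ a bump at $s_0$ whose drop across $\widehat Z\setminus\widetilde Z$ exceeds $\max\widetilde\psi$; then $\psi=\widetilde\psi+\rho$ still has $\partial_n\varphi\le 0$ and its critical set inside $\mathcal O_0$, and the sub-level-set separation holds. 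Alternatively, and in line with the source the paper cites, carry out the local patching argument.
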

Note that Theorem \ref{Carlemansurv1} is stated with an $H^{1}$ observation in \cite{LebeauRobbiano}, but we can transform it into an $L^{2}$ observation as in \eqref{Interpolationsurv1} by using a cut-off function and integrations by parts.

For the estimate of $U_2$, we note that it satisfies a Ventcel boundary condition on $J_{1}$ and the Dirichlet boundary condition on $J_{0}$. 
Hence, we use the following result, which is basically a consequence of a Carleman estimate obtained in \cite{Buffe17}. However for the sake of completeness, we 
prove the next result in \cref{Carlemansurv2proof}.
\begin{Theorem}\label{Carlemansurv2}
There exist $C_{2}>0$ and $\mu_{2}\in(0,1)$ such that 
\begin{multline}\label{Interpolationsurv2}
\left\| w \right\|_{H^{1}\left(\widetilde Z\right)}+\left| w \right|_{H^{1}\left(\widetilde J_1\right)}
 \leq C_{2} \left( \left\| w \right\|_{H^{1}\left(\widehat Z\right)}+\left| w \right|_{H^{1}\left(\widehat J_1\right)}
		 \right)^{1-\mu_{2}} 
\\
\times \left( \left\| \Delta_{z} w \right\|_{L^{2}\left(\widehat Z\right)} 
 	+ \left| \left(\partial_{x_{2}}w\right)_{|_{J_{1}}}-\Delta_{x_1,s} w_{|_{J_{1}}} \right|_{L^{2}(\widehat J_{1})}+ \left\| w \right\|_{L^{2}(\mathcal O_0)} 
\right)^{\mu_{2}}.
\end{multline}
for all $w\in H^{2}(Z)$ such that $w_{|_{J_{0}}}=0$ and $w_{|_{J_{1}}}\in H^{2}(J_{1}).$
\end{Theorem}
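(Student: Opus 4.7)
The plan is to derive \eqref{Interpolationsurv2} from the Carleman estimate for $-\Delta_z$ with Ventcel boundary conditions established in \cite{Buffe17}, via the classical Lebeau--Robbiano optimization argument.

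First I would construct a weight $\varphi \in C^\infty(\overline{\widehat Z})$ satisfying (i) all critical points of $\varphi$ lie inside the observation set $\mathcal O_0$, (ii) $\varphi$ is H\"ormander pseudoconvex for $-\Delta_z$ throughout $\widehat Z$, and (iii) $\varphi$ fulfills the additional boundary pseudoconvexity condition on $\widehat J_1$ required by the Ventcel Carleman estimate in \cite{Buffe17}. A weight of the form $\varphi(s,x) = e^{\lambda \widetilde\psi(x) - (s-s_0)^2}$ analogous to the one used in Section \ref{choice}, with $\widetilde\psi$ chosen so that its critical points lie in $\omega_0$ and with the correct sign of $\partial_{x_2}\widetilde\psi$ at $\Gamma_1$, should satisfy these conditions for $\lambda$ large enough.

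Next I would introduce a cutoff $\chi \in C_c^\infty(\widehat Z)$ with $\chi\equiv 1$ on a neighborhood of $\overline{\widetilde Z}$, together with three levels $0<\alpha_0<\alpha_1<\alpha_2$ chosen so that $\varphi \geq \alpha_2$ on $\widetilde Z\cup \widetilde J_1$ and $\varphi \leq \alpha_0$ on $\supp \nabla\chi \setminus \mathcal O_0$; such levels exist because $|\nabla_z \varphi|$ is bounded away from zero outside $\mathcal O_0$. Applying the Carleman estimate of \cite{Buffe17} to $\chi w$ and expanding $\Delta_z(\chi w) = \chi \Delta_z w + [\Delta_z, \chi] w$, together with its analogue on $\widehat J_1$ for the Ventcel operator, the commutator contributions are supported on $\supp \nabla\chi$, while the exponential weight on the left-hand side is at least $e^{\tau \alpha_2}$ on $\widetilde Z \cup \widetilde J_1$. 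After standard manipulations (including a Caccioppoli-type trick that tests the bulk equation against $\zeta^2 w$ for a cutoff $\zeta$ supported in $\mathcal O_0$, in order to replace the $H^1(\mathcal O_0)$ observation naturally produced by the Carleman estimate with the $L^2(\mathcal O_0)$ one appearing in the statement), one arrives at an inequality of the shape
\begin{equation*}
\|w\|_{H^1(\widetilde Z)} + |w|_{H^1(\widetilde J_1)} \leq C e^{-\tau(\alpha_2-\alpha_0)} A + C e^{\tau(M-\alpha_2)} B, \qquad \tau \geq \tau_0,
\end{equation*}
with $M = \sup_{\widehat Z} \varphi$, $A = \|w\|_{H^1(\widehat Z)} + |w|_{H^1(\widehat J_1)}$, and $B = \|\Delta_z w\|_{L^2(\widehat Z)} + |(\partial_{x_2}w)_{|J_1} - \Delta_{x_1,s}w_{|J_1}|_{L^2(\widehat J_1)} + \|w\|_{L^2(\mathcal O_0)}$.

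Balancing the two right-hand terms by the optimal choice of $\tau$ (legal when $A/B$ exceeds an explicit constant; the opposite case is trivial) yields \eqref{Interpolationsurv2} with $\mu_2 = (\alpha_2 - \alpha_0)/(M - \alpha_0) \in (0,1)$. The main technical hurdle is clause (iii) in the construction of $\varphi$: one has to verify that the exponential-of-Morse ansatz simultaneously meets H\"ormander's condition in the bulk and the coupled normal--tangential pseudoconvexity at the Ventcel boundary demanded in \cite{Buffe17}; once this is secured, the remainder is standard Lebeau--Robbiano bookkeeping together with a routine Caccioppoli absorption for the observation term.
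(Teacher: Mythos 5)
Your plan of applying the Ventcel Carleman estimate with a \emph{global} Fursikov--Imanuvilov-type weight $\varphi=e^{\lambda\widetilde\psi(x)-(s-s_0)^2}$ (critical points in $\mathcal O_0$), a single cutoff $\chi$, and then optimising over $\tau$, is a genuinely different route from the paper and, as written, has a gap. The Carleman estimate of \cite{Buffe17} quoted in \cref{Carlemanestimate} is a \emph{local} inequality: it holds for $w\in C^\infty_0(V)$ with $V$ a small half-neighbourhood of a boundary point $z^0\in\widetilde J_1$, and its hypotheses \eqref{ellipticitehormanderintro}--\eqref{weightfunctionpropertiesintro} include $\nabla_z\varphi\neq 0$ on $\overline V$ together with the coupled tangential/normal control near $J_1$. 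Your weight vanishes somewhere in $\mathcal O_0\subset\widehat Z$, so the cited estimate simply does not apply on all of $\widehat Z$. Turning the local Ventcel Carleman estimate into a global one that tolerates critical points would be a non-trivial new step — you would have to redo the commutator and boundary computations globally under the Ventcel boundary condition, re-verify \eqref{weightfunctionpropertiesintro} uniformly along $J_1\cap\overline{\widehat Z}$, and absorb the bad region around the critical set into the observation as in \cref{Carlemanonpressureprop}, but now with the boundary coupling that \cref{Carlemanonpressureprop} does not have. "Standard Lebeau--Robbiano bookkeeping" is not enough here.

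The paper avoids this entirely: it keeps the Carleman estimate local, with a weight $\varphi=e^{-\lambda\psi}$, $\psi(z)=|z-z^*|_\lambda$, centred at a point $z^*$ below the boundary, which has \emph{no} critical point on the half-ball $V$. This yields \cref{boundarylemma}, a \emph{local} interpolation estimate whose observation term is $\|v\|_{H^1(\widehat Z_\beta)}$ — an interior slab at finite distance from $J_1$, not $\mathcal O_0$. One then covers $\widetilde J_1$ by finitely many such neighbourhoods by compactness, and propagates the $\widehat Z_\beta$ observation to $\mathcal O_0$ by chaining standard interior (and Dirichlet-boundary) Lebeau--Robbiano interpolation estimates, as in \cite{LebeauRobbiano}. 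Your Caccioppoli step to downgrade the $H^1(\mathcal O_0)$ observation to $L^2(\mathcal O_0)$ is a separate, correct, but subsidiary point; the missing ingredient is the local-to-global propagation, not the norm of the observation. Either prove a bona fide global Ventcel Carleman estimate, or switch to the local-then-patch strategy of the paper.
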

Note that both \cref{Carlemansurv1} and \cref{Carlemansurv2} hold for $\mu_3\in (0,1)$ such that
$\mu_3\leq \mu_1$ and $\mu_3\leq \mu_2$ (with a modification of the constants $C_1$ and $C_2$).
Thus, with $\mu_{3}=\min(\mu_{1},\mu_{2})$ and an adequate constant $C_3$, we can apply 
\cref{Carlemansurv1} with $w=U_{1}$ and \cref{Carlemansurv2} with $w=U_{2}$, where $U$ satisfies \eqref{ns2.0.1} and we deduce 
\begin{multline}\label{estimatevelocity}
\left\| U \right\|_{H^{1}\left(\widetilde Z\right)}+\left| U_2 \right|_{H^{1}\left(\widetilde J_1\right)}
 \leq C_{3} \left( \left\| U \right\|_{H^{1}\left(\widehat Z\right)}+\left| U_2 \right|_{H^{1}\left(\widehat J_1\right)}
		 \right)^{1-\mu_{3}} 
\\
\times \left( \left\| \nabla P \right\|_{L^{2}\left(\widehat Z\right)} 
 	+ \left| P_{|J_{1}}-m_{\mathcal I}(P)  \right|_{L^{2}(\widehat J_{1})}+ \left\| U \right\|_{L^{2}(\mathcal O_0)} 
\right)^{\mu_{3}}.
\end{multline}
We have used here the fact that on $J_1$,
$$
\partial_{x_2} U_2 = - \partial_{x_1} U_1=0.
$$
We are going now to combine  the above estimate with the estimates of the pressure terms obtained in \cref{Sec_pressure}.

\subsection{Patching the estimates together}\label{Patchingestimatesubsection}
Combining the previous estimates, we can now prove the following result.
\begin{Theorem}\label{globalinterpolation}
There exist $C>0$ and $\mu\in (0,1)$ such that for any $\Lambda>0$ and for any $(a_{j})_{j}\in \mathbb C^{\mathbb N}$, the function $U$ 
defined by \eqref{DefofUandP} satisfies 
$$
\left\| U \right\|_{H^{1}\left(\widetilde Z\right)}+\left| U_2 \right|_{H^{1}\left(\widetilde J_1\right)}
 \leq C \left( \left\| U \right\|_{H^{2}\left(Z\right)}+\left| U_2 \right|_{H^{2}\left(J_1\right)}\right)^{1-\mu}
\left\| U \right\|_{H^{2}(\mathcal O_0)}^\mu.
$$
\end{Theorem}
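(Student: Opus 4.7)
The plan is to combine the velocity interpolation inequality \eqref{estimatevelocity} with the global pressure estimate of \cref{Estimateonpressureterms}. The estimate \eqref{estimatevelocity} already reduces the question to bounding the pressure quantities $\|\nabla P\|_{L^{2}(\widehat Z)} + |P_{|J_{1}} - m_{\mathcal I}(P)|_{L^{2}(\widehat J_{1})}$ together with $\|U\|_{L^{2}(\mathcal O_{0})}$ by the data. I will use \cref{Estimateonpressureterms} to express these pressure terms as a combination of an observation of $U$ on $\mathcal O_{0}$ and a residual involving $U$ on all of $Z$, and then optimize the large parameter $\tau$ so as to balance the two contributions.

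The first substep is to control the right-hand side of \eqref{estimatepressure0000}. For the observation term $\tau^{3}\|q\|^{2}_{L^{2}((0,S_{0})\times \omega_{0})} + \tau\|\nabla q\|^{2}_{L^{2}((0,S_{0})\times \omega_{0})}$, I fix the freedom in the additive function $c_{P}(s)$ of \eqref{DefofUandP} by choosing it so that $\int_{\omega_{0}} P(s,\cdot)\,dx = 0$ for a.e.\ $s$; a slice-wise Poincar\'e inequality on $\omega_{0}$ then yields $\|P\|_{L^{2}((0,S_{0})\times \omega_{0})} \lesssim \|\nabla P\|_{L^{2}((0,S_{0})\times \omega_{0})}$. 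The momentum equations in \eqref{ns2.0.1} give the identity $\nabla P = \Delta_{z} U$, hence $\|\nabla P\|_{L^{2}} \lesssim \|U\|_{H^{2}}$. Since $q$ vanishes outside $\mathrm{supp}\,\chi$, the two observation terms are bounded by a constant times $\tau^{3} e^{2\tau M_{0}}\|U\|^{2}_{H^{2}(\mathcal O_{0})}$, with $M_{0}$ an upper bound for $\varphi$ on $\mathcal O_{0}$. For the source terms $\tau(\|\partial_{x_{1}}h_{i}\|^{2}_{L^{2}(Z)} + |\partial_{x_{1}}h_{3}|^{2}_{L^{2}(J_{1})})$, I exploit that $\chi',\chi''$ are supported in $\{S_{0}/8 \leq |s-s_{0}| \leq S_{0}/6\}$, on which $\varphi_{0} \leq \alpha_{2} := e^{-(S_{0}/8)^{2}}$, obtaining the bound $\tau e^{2\tau\alpha_{2}}(\|U\|^{2}_{H^{2}(Z)} + |U_{2}|^{2}_{H^{2}(J_{1})})$.

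The second substep is to strip the weight on the left of \eqref{estimatepressure0000} over $\widehat Z$ (on which $\chi \equiv 1$, so $p = P$) and $\widehat J_{1}$. Since $\varphi \geq \beta_{1} := e^{-(S_{0}/9)^{2}}$ on $\widehat Z$, the identity $\nabla q = e^{\tau\varphi}(\nabla P + \tau P \nabla \varphi)$ gives $e^{2\tau\beta_{1}}\|\nabla P\|^{2}_{L^{2}(\widehat Z)} \lesssim \|\nabla q\|^{2}_{L^{2}(\widehat Z)} + \tau^{2}\|q\|^{2}_{L^{2}(\widehat Z)}$, and an analogous argument handles the boundary term on $\widehat J_{1}$. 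Combining with the previous bounds I obtain, for $A := M_{0} - \beta_{1} > 0$ and $B := \beta_{1} - \alpha_{2} > 0$,
\[
\|\nabla P\|^{2}_{L^{2}(\widehat Z)} + |P - m_{\mathcal I}(P)|^{2}_{L^{2}(\widehat J_{1})} \lesssim \tau^{3} e^{2\tau A}\|U\|^{2}_{H^{2}(\mathcal O_{0})} + e^{-2\tau B}\bigl(\|U\|^{2}_{H^{2}(Z)} + |U_{2}|^{2}_{H^{2}(J_{1})}\bigr).
\]
The crucial strict sign $B > 0$ comes from the geometric separation $S_{0}/9 < S_{0}/8$ between $\widehat Z$ and $\mathrm{supp}\,\chi'$.

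It remains to insert this estimate into \eqref{estimatevelocity}. Writing $L = \|U\|_{H^{1}(\widetilde Z)} + |U_{2}|_{H^{1}(\widetilde J_{1})}$, $X = \|U\|_{H^{2}(\mathcal O_{0})}$ and $Y = \|U\|_{H^{2}(Z)} + |U_{2}|_{H^{2}(J_{1})}$, I obtain $L \lesssim Y^{1-\mu_{3}}\bigl(\tau^{3/2} e^{\tau A} X + e^{-\tau B} Y\bigr)^{\mu_{3}}$ for every $\tau \geq \tau_{0}$. Balancing the two terms in the parenthesis via $\tau = (A+B)^{-1}\ln(Y/X)$ and absorbing the polynomial factor $\tau^{3\mu_{3}/2}$ through the elementary bound $(\ln t)^{p} \leq C_{\varepsilon}\, t^{\varepsilon}$ for $t > 1$ yields the claim with any $\mu \in (0, \mu_{3} B / (A + B))$. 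The main obstacle is precisely this careful bookkeeping of the competing exponentials in $\tau$: it only works because of the separation between the sets where the Carleman weight is close to its maximum (on $\widehat Z$) and where the cutoff-induced source terms live (on $\mathrm{supp}\,\chi' \cup \mathrm{supp}\,\chi''$).
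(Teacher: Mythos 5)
Your proposal is correct and follows essentially the same route as the paper: fix the constant $c_P$ so that $P$ has mean zero on $\omega_0$, use Poincar\'e--Wirtinger and $\nabla P=\Delta_z U$ to convert the pressure observation into $\|U\|_{H^{2}(\mathcal O_0)}$, strip the weight in \eqref{estimatepressure0000} by comparing $\inf_{\widehat Z}\varphi_0$ with $\sup_{\operatorname{supp}\chi'}\varphi_0$, insert into \eqref{estimatevelocity}, and optimize in $\tau$. The only cosmetic difference is that the paper first applies a Young inequality to \eqref{estimatevelocity} before substituting the pressure bound, whereas you keep the H\"older form and balance inside the bracket; also your bound $\sup_{\operatorname{supp}\chi'}\varphi_0=e^{-(S_0/8)^2}=e^{-S_0^2/64}$ is the correct one (the paper's $e^{-S_0^2/36}$ is a slip, but harmless since $1/81<1/64$ still gives the needed strict gap).
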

%
%
\begin{proof}
We start with the estimate \eqref{estimatepressure0000}, where we recall that
$q$ is given by \eqref{defup}, \eqref{weight3} and $h_i$, $i=1,2,3$ by \eqref{weight5}:
\begin{multline}\label{est4.3.1}
\tau^{3}\| e^{\tau\varphi}\chi P \|_{L^{2}(Z)}^{2}  + \tau \| e^{\tau\varphi}\chi\nabla P \|^{2}_{L^{2}(Z)} + \tau^{3} | e^{\tau\varphi_{0}}\chi (P-m_{\mathcal I}(P))|_{L^{2}(J_{1})}^{2} + \tau |e^{\tau\varphi_{0}} \chi\partial_{x_1}P|_{L^{2}(J_{1})}^{2} 
+ \tau |e^{\tau\varphi_{0}}\chi \partial_{x_2}P|_{L^{2}(J_{1})}^{2} 
\\
\leq C\Big( \tau^{3} \| e^{\tau\varphi} P \|^{2}_{L^{2}(\mathcal O_0)} 
+\tau \| e^{\tau\varphi}  \nabla P \|^{2}_{L^{2}(\mathcal O_0)}  
+\tau\left\| e^{\tau\varphi_{0}} \partial_{x_1} f_1\right\|_{L^2(Z)}^2
\\
+\tau\left\| e^{\tau\varphi_{0}}\partial_{x_1} f_2\right\|_{L^2(Z)}^2
+\tau\left| e^{\tau\varphi_{0}} \partial_{x_1} f_3\right|_{L^2(J_1)}^2\Big).
\end{multline}
Note that $f_1,f_2$ (respectively $f_3$) are supported in 
$$
Z_{8}:=\left( \supp \chi' \right)\times \mathcal{I} \times (0,1) \quad (\text{respectively in} \ J_8:=\left( \supp \chi' \right)\times \mathcal{I} \times \{1\}),
$$
and since $\supp \chi'\subset \left[s_0-S_0/6,s_0+S_0/6\right]$
\begin{equation}\label{supofweight}
\sup_{J_{8}}\varphi_{0} =\sup_{Z_{8}}\varphi_{0} \leq e^{-\frac{S_{0}^{2}}{36}}.
\end{equation}
Hence, from \eqref{deff123}
\begin{align}\label{est4.3.2}
\left\| e^{\tau\varphi_{0}} \partial_{x_1} f_1\right\|_{L^2(Z)}
+\left\| e^{\tau\varphi_{0}}\partial_{x_1} f_2\right\|_{L^2(Z)}
+\left| e^{\tau\varphi_{0}} \partial_{x_1} f_3\right|_{L^2(J_1)}
\leq C e^{\tau e^{-\frac{S_{0}^{2}}{36}}} \left(\| U \|_{H^{2}(Z)} + | U_2 |_{H^{2}(J_1)}\right).
\end{align}
In $\widehat Z$ (respectively in $\widehat J_{1}$), $\chi(s)\equiv1$, and 
\begin{equation}\label{infofweight}
\inf_{\widehat Z} \varphi = \inf_{\widehat Z} \varphi_{0}= \inf_{\widehat J_{1}} \varphi_{0} = e^{- \frac{S_{0}^{2}}{81}}.
\end{equation}
Combining \eqref{est4.3.1}, \eqref{est4.3.2} and \eqref{infofweight}, there exist $\tau_4,c_{1},c_{2}>0$ such that for all $\tau\geq \tau_{4}$, we have
\begin{equation}\label{est4.3.4}
 \| \nabla P \|^{2}_{L^{2}(\widehat Z)} +  |  (P-m_{\mathcal I}(P))|_{L^{2}(\widehat J_{1})}^{2} 
 \leq e^{c_{1}\tau }\left(\left\|  P \right\|^{2}_{L^{2}(\mathcal O_0)} +\left\|  \nabla P \right\|^{2}_{L^{2}(\mathcal O_0)} \right)
+e^{-c_{2}\tau} \left(\| U \|_{H^{2}(Z)} + | U_2 |_{H^{2}(J_1)}\right).
\end{equation}
On the other hand, we deduce from \eqref{estimatevelocity} and a Young inequality that
\begin{multline*}
\left\| U \right\|_{H^{1}\left(\widetilde Z\right)}+\left| U_2 \right|_{H^{1}\left(\widetilde J_1\right)}
 \leq C_{4}e^{-\frac{\mu_{3}c_{2}}{2(1-\mu_{3})}\tau} \left( \left\| U \right\|_{H^{1}\left(\widehat Z\right)}+\left| U_2 \right|_{H^{1}\left(\widehat J_1\right)}
		 \right)
\\
+e^{\frac{c_2}{2} \tau}\left( \left\| \nabla P \right\|_{L^{2}\left(\widehat Z\right)} 
 	+ \left| P_{|J_{1}}-m_{\mathcal I}(P)  \right|_{L^{2}(\widehat J_{1})}+ \left\| U \right\|_{L^{2}(\mathcal O_0)} 
\right)
\end{multline*}
and combining this relation with \eqref{est4.3.4}, we deduce the existence of $c_{3},c_{4}>0$ such that for all $\tau\geq \tau_{4}$
$$
\left\| U \right\|_{H^{1}\left(\widetilde Z\right)}+\left| U_2 \right|_{H^{1}\left(\widetilde J_1\right)}
 \lesssim e^{-c_3 \tau} \left( \left\| U \right\|_{H^{2}\left(Z\right)}+\left| U_2 \right|_{H^{2}\left(J_1\right)}\right)
\\
+e^{c_4 \tau}\left( \left\|  P \right\|^{2}_{L^{2}(\mathcal O_0)} +\left\|  \nabla P \right\|^{2}_{L^{2}(\mathcal O_0)} + \left\| U \right\|_{L^{2}(\mathcal O_0)} \right)
$$
Now, we can use $c_P$ in \eqref{DefofUandP} so that for all $s\in [0,S_0]$, 
$$
\int_{\omega_0} P(s,x) \ dx=0
$$
and using the Poincar\'e-Wirtinger inequality, we deduce that 
$$
 \left\|  P \right\|^{2}_{L^{2}(\mathcal O_0)}\lesssim  \left\| \nabla P \right\|^{2}_{L^{2}(\mathcal O_0)} \lesssim  \left\| \Delta_z U \right\|_{L^{2}(\mathcal O_0)}.
$$
We deduce that for some constants $c_{5},c_{6}>0$, for all $\tau\geq \tau_{4}$,
$$
\left\| U \right\|_{H^{1}\left(\widetilde Z\right)}+\left| U_2 \right|_{H^{1}\left(\widetilde J_1\right)}
 \lesssim e^{-c_5 \tau} \left( \left\| U \right\|_{H^{2}\left(Z\right)}+\left| U_2 \right|_{H^{2}\left(J_1\right)}\right)
\\
+e^{c_6 \tau}\left\| U \right\|_{H^{2}(\mathcal O_0)}.
$$
Optimizing this inequality with respect to $\tau\geq \tau_{4}$ (see, for instance, \cite[Lemma 8.4]{Buffe17}) 
allows us to conclude the proof of \cref{globalinterpolation}.
\end{proof}

\subsection{From the interpolation inequality to the spectral inequality}
Using \cref{globalinterpolation}, we are now in a position to prove \cref{SpectralInequality}. This inequality combined with \cref{T2} yields the main result of the article (\cref{Tmain}).
\begin{proof}[Proof of \cref{SpectralInequality}]
From \eqref{ns7.1} and \eqref{fs3.4}, we deduce that
$$
\left\| U \right\|_{H^{1}\left(\widetilde Z\right)}^2+\left| U_2 \right|_{H^{1}\left(\widetilde J_1\right)}^2
\geq 
\left\| U \right\|_{L^{2}\left(\widetilde Z\right)}^2+\left| U_2 \right|_{L^{2}\left(\widetilde J_1\right)}^2
\geq 
\int_{s_0- \frac{S_{0}}{10}}^{s_0+ \frac{S_{0}}{10}} \sum_{\lambda_{j}\leq \Lambda} |a_{j}|^2 \cosh(\sqrt \lambda_{j}s)^2 \ ds
\gtrsim 
 \sum_{\lambda_{j}\leq \Lambda} |a_{j}|^2
$$
and
$$
\left\| U \right\|_{H^{2}\left(Z\right)}^2+\left| U_2 \right|_{H^{2}\left(J_1\right)}^2
\lesssim
e^{C\sqrt{\Lambda}} \sum_{\lambda_{j}\leq \Lambda} |a_{j}|^2
$$
Combining \cref{globalinterpolation} with the previous relations, we deduce that
$$
\sum_{\lambda_{j}\leq \Lambda} |a_{j}|^2 \lesssim e^{C\sqrt{\Lambda}} \left\| U \right\|_{H^{2}(\mathcal O_0)}^2.
$$
Using a cut-off function and integrations by parts, we deduce 
$$
\sum_{\lambda_{j}\leq \Lambda} |a_{j}|^2 \lesssim e^{C\sqrt{\Lambda}} \left\| U \right\|_{L^{2}(\mathcal O)}^2.
$$
and thus \cref{SpectralInequality}.
\end{proof}

\appendix

\section{Proof of \cref{Carlemansurv2}}\label{Carlemansurv2proof}
\subsection{A Carleman estimate}
The proof of \cref{Carlemansurv2} is mainly based on a Carleman estimate obtained in \cite{Buffe17} that we recall here. 
We recall that $Z, J_1, J_0$ are defined by \eqref{defZJ} whereas 
$\widetilde{Z}$ and $\widetilde{J}_1$ are defined by \eqref{Ztilde}. 
In what follows, we consider $z^0\in \widetilde{J}_1$, an open neighborhood $V$ of $z^0$ in $\overline{Z}$ and
a weight function $\varphi\in C^{\infty}(\overline{V})$. For any $\sigma\in \mathbb{R}$, we define
\[
p_{\varphi,\sigma}(z,\xi,\tau) = |\xi|^{2} -\tau^{2}|\nabla_{z}\varphi (z)|^{2}-\sigma^{2}+2i\tau \xi\cdot\nabla_{z}\varphi(z), 
\quad (z\in V,\  \xi\in \mathbb{R}^3,\  \tau \in \mathbb{R}).
\]
It is the principal symbol of the conjugated operator associated with $-\Delta_z-\sigma^2$, that is, of the operator 
$$
P_{\varphi,\sigma}  = -e^{\tau\varphi}\left(\Delta_{z}+\sigma^2\right)e^{-\tau\varphi} 
= -\Delta_{z} +2\tau\nabla_{z}\varphi\cdot\nabla_{z}-\tau^{2}|\nabla_{z}\varphi|^{2} +\tau(\Delta_{z} \varphi )-\sigma^2.
$$
We assume the following hypotheses on $\varphi$: sub-ellipticity on $\overline{V}$, that is the existence of 
$\tau_0>0$ such that for any $z\in V$,  $\xi\in \mathbb{R}^3$, $|\sigma|\geq 1$ and $\tau\geq \tau_0|\sigma|$, 
\begin{equation}\label{ellipticitehormanderintro}
p_{\varphi,\sigma}(z,\xi,\tau)=0 \implies \frac{1}{2i}\left\{\overline{p_{\varphi,\sigma}},p_{\varphi,\sigma}\right\}(z,\xi,\tau)>0.
\end{equation}
and the two following conditions to handle Ventcel boundary conditions (see \cite[conditions (23) and (24)]{Buffe17}):
\begin{equation}\label{weightfunctionpropertiesintro}
\nabla_z\varphi\neq 0\quad\text{in} \ \overline{V},\quad\text{and}\quad 
\sup_{\overline V\cap\widetilde J_{1}} \left|\nabla_{s,x_{1}}\varphi \right| \leq \nu_{0} \inf_{\overline V} \left|\partial_{x_2} \varphi\right|,
\end{equation}
for $\nu_{0}>0$ small enough. We recall that the Poisson bracket is defined by
$$
\left\{p^{(1)},p^{(2)}\right\}=\sum_{j=1}^3 \frac{p^{(1)}}{\partial \xi_j} \frac{p^{(2)}}{\partial z_j}-\frac{p^{(2)}}{\partial \xi_j} \frac{p^{(1)}}{\partial z_j}
$$
where we set here $z_1=s$, $z_2=x_1$ and $z_3=x_2$ to simplify.

Then we have the following result proved in \cite{Buffe17}:
\begin{Theorem}\label{Carlemanestimate}
Assume $z^0\in \widetilde{J}_1$ and $V$ is an open neighborhood of $z^0$ in $\overline{Z}$. Assume also that
$\varphi\in C^{\infty}(\overline{V})$ satisfies the conditions \eqref{ellipticitehormanderintro}) and \eqref{weightfunctionpropertiesintro} for $\nu_0$ small enough.
Then, there exist $\tau_0>0$ and $C>0$ such that
for all $|\sigma|\geq1$, for all $\tau\geq\tau_{0}|\sigma|$ and for all $w\in C^{\infty}_{0}(V)$,
\begin{multline*}
\tau^3 \left\| e^{\tau\varphi} w \right\|_{L^2(V)}^{2}
+\tau \left\| e^{\tau\varphi}\nabla_z w \right\|_{L^{2}(V)}^{2}
+\tau^{3} \left| e^{\tau\varphi} w_{|J_{1}} \right|_{L^{2}\left(V\cap\widetilde J_{1}\right)}^{2}
+\tau \left| e^{\tau\varphi} \nabla_{s,x_{1}} w_{| J_{1}} \right|_{L^{2}\left(V\cap \widetilde J_{1}\right)}^{2}
\\
+\tau \left| e^{\tau\varphi}\partial_{x_{2}} w_{|J_{1}} \right|^2_{L^2\left(V\cap\widetilde J_{1}\right)}
\leq 
C\left(\left\| e^{\tau\varphi}(-\Delta_{z}-\sigma^{2}) w \right\|^{2}_{L^{2}(V)}
+\tau \left| e^{\tau\varphi} (\partial_{x_{2}}w_{|J_{1}}- \Delta_{s,x_1} w_{|J_{1}} \right|_{L^{2}\left(V\cap\widetilde J_{1}\right)}^{2}\right).
\end{multline*}
 \end{Theorem}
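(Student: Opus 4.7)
The strategy is to adapt Hörmander's approach to elliptic Carleman estimates, with boundary machinery specifically tailored to Ventcel conditions. I would first set $v := e^{\tau\varphi}w$ and work with the conjugated operator $P_{\varphi,\sigma}v = e^{\tau\varphi}(-\Delta_z - \sigma^2)w$, decomposing it into its formally self-adjoint and anti-self-adjoint parts
$$
P_+ := -\Delta_z - \tau^2|\nabla_z\varphi|^2 - \sigma^2, \qquad P_- := 2\tau\,\nabla_z\varphi\cdot\nabla_z + \tau(\Delta_z\varphi),
$$
so that $\|P_{\varphi,\sigma}v\|_{L^2(V)}^2 \geq 2\Re (P_+ v, P_- v)_{L^2(V)}$. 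Expanding the cross term by integration by parts in $z=(s,x_1,x_2)$ produces an interior quadratic form with Hessian-of-$\varphi$ coefficients, plus boundary contributions concentrated on $V\cap J_1$ (since $w$ has compact support in $V$ and $V$ can be chosen not to touch $J_0$).

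For the interior part, the sub-ellipticity hypothesis \eqref{ellipticitehormanderintro} is, via a sharp Gårding argument on the principal symbol $p_{\varphi,\sigma}$, precisely what delivers a positive quadratic form controlling $\tau^3\|v\|_{L^2(V)}^2 + \tau\|\nabla_z v\|_{L^2(V)}^2$ once $|\sigma|\geq 1$ and $\tau\geq\tau_0|\sigma|$ (this coupling between $\tau$ and $\sigma$ is essential to absorb the $\sigma^2$ term into the weight budget). This part is essentially the standard large-parameter elliptic Carleman estimate, carefully tracking the dependence on $\sigma$.

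The genuinely new ingredient is the treatment of the boundary contributions on $V\cap\widetilde{J}_1$. Rather than imposing a boundary condition on $v_{|J_1}$, I would use the Ventcel relation to regard $(\partial_{x_2}w - \Delta_{s,x_1}w)_{|J_1}$ as a given source and then run a second, tangential Carleman computation on the trace $v_{|J_1}$, namely apply the same multiplier procedure to the $(s,x_1)$-conjugated operator $e^{\tau\varphi}(-\Delta_{s,x_1})e^{-\tau\varphi}$ acting on $V\cap\widetilde{J}_1$. This yields the boundary left-hand terms $\tau^3|v_{|J_1}|^2 + \tau|\nabla_{s,x_1}v_{|J_1}|^2 + \tau|\partial_{x_2}v_{|J_1}|^2$, at the price of cross-terms coupling $\partial_{x_2}v$ with $\nabla_{s,x_1}v$ at the boundary and of a $|e^{\tau\varphi}(\partial_{x_2}w - \Delta_{s,x_1}w)_{|J_1}|^2$ term on the right-hand side.

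The main obstacle is the positivity of the combined boundary quadratic form, and this is precisely where condition \eqref{weightfunctionpropertiesintro} enters. In a standard Dirichlet or Neumann Carleman estimate, the boundary form is rendered coercive by a sign condition on $\partial_n\varphi$ alone; here the tangential operator on $J_1$ has the same order as the tangential trace of the bulk operator, so the two multiplier computations interact and produce off-diagonal terms. The smallness $|\nabla_{s,x_1}\varphi| \leq \nu_0 |\partial_{x_2}\varphi|$ on $\widetilde{J}_1$ makes the boundary form diagonally dominant: I would take $\partial_{x_2}v_{|J_1}$ as the principal degree of freedom, split the cross-terms by Young's inequality with a coefficient of order $\nu_0$, and fix $\nu_0$ small enough so that the tangential blocks are absorbed. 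Combining the resulting positive boundary form with the interior Gårding estimate and finally undoing the conjugation (replacing $v$ by $e^{\tau\varphi}w$) yields the stated inequality of \cref{Carlemanestimate}.
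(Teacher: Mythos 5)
The paper does not prove \cref{Carlemanestimate} at all: the statement is imported verbatim from \cite{Buffe17}, and what the paper actually proves in its appendix is only that the specific weight $\varphi=e^{-\lambda\psi}$ it then uses satisfies the hypotheses \eqref{ellipticitehormanderintro} and \eqref{weightfunctionpropertiesintro}. So your proposal, which tries to reprove the Carleman estimate itself, is attempting something the paper deliberately avoids.

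Taken on its own merits, your sketch is pointed in the right direction --- conjugation, the $P_+/P_-$ decomposition, bulk positivity from sub-ellipticity via a G\aa rding-type argument with $\tau\ge\tau_0|\sigma|$, and a second tangential computation driven by the Ventcel relation --- and you correctly identify that \eqref{weightfunctionpropertiesintro} is what must render the boundary contributions coercive. But there is a genuine gap in the boundary step. After the interior integrations by parts the boundary form on $V\cap\widetilde J_1$ is not a pointwise-definite quadratic form in $\partial_{x_2}v_{|J_1}$ and $\nabla_{s,x_1}v_{|J_1}$: it involves a second-order tangential operator $\Delta_{s,x_1}$ coupled to the first-order normal trace, and the ``Young inequality with a coefficient of order $\nu_0$'' argument you sketch does not apply at that level because the cross terms are tangential (pseudo)differential operators, not scalars. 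The proof in \cite{Buffe17} handles this by a semiclassical microlocal analysis at the boundary: the tangential cotangent space over $J_1$ is split into regions (elliptic/hyperbolic/glancing with respect to the boundary-reduced conjugated symbol), a pseudodifferential factorization of the boundary problem is constructed in each region, and only then does the hypothesis $|\nabla_{s,x_1}\varphi|\le\nu_0|\partial_{x_2}\varphi|$ furnish the needed sign. This phase-space decomposition is the heart of the argument and is absent from your sketch, so as written it does not amount to a proof; the appropriate move here, as in the paper, is to quote \cite{Buffe17} for the Carleman inequality and only verify the hypotheses on $\varphi$.
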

 First to precise the above statement, by $w\in C^{\infty}_{0}(V)$ we mean that $w$ is the restriction of a $C^\infty$ function with compact support in $V_0$ where $V_0$ is an open set of $\mathbb{R}\times \mathcal{I}\times \mathbb{R}$ such that $V_0\cap \overline{Z}=V$. Second, we use the above result in the case 
$\sigma=1$, so that by taking $\tau_0$ large enough, we obtain that
for all $\tau\geq\tau_{0}$ and for all $w\in C^{\infty}_{0}(V)$,
\begin{multline}\label{Carleman}
\tau^3 \left\| e^{\tau\varphi} w \right\|_{L^2(V)}^{2}
+\tau \left\| e^{\tau\varphi}\nabla_z w \right\|_{L^{2}(V)}^{2}
+\tau^{3} \left| e^{\tau\varphi} w_{|J_{1}} \right|_{L^{2}\left(V\cap\widetilde J_{1}\right)}^{2}
+\tau \left| e^{\tau\varphi} \nabla_{s,x_{1}} w_{| J_{1}} \right|_{L^{2}\left(V\cap \widetilde J_{1}\right)}^{2}
\\
+\tau \left| e^{\tau\varphi}\partial_{x_{2}} w_{|J_{1}} \right|^2_{L^2\left(V\cap\widetilde J_{1}\right)}
\leq 
C\left(\left\| e^{\tau\varphi}\Delta_{z} w \right\|^{2}_{L^{2}(V)}
+\tau \left| e^{\tau\varphi} \left(\partial_{x_{2}}w_{|J_{1}}- \Delta_{s,x_1} w_{|J_{1}} \right) \right|_{L^{2}\left(V\cap\widetilde J_{1}\right)}^{2}\right).
\end{multline}

\subsection{Interpolation estimates for the Ventcel boundary condition}
Using the Carleman inequality of the previous section, one can deduce, in a classical way, an interpolation inequality. 
First let us define the weight function that we are going to use.

We consider the following norms on $\mathbb{R}\times \mathcal{I}\times \mathbb{R}$:
$$
\left| \begin{pmatrix}
s, x_1, x_2
\end{pmatrix}\right|_{\lambda}:=
\left( \frac{s^2}{\lambda^2} + \frac{x_1^2}{\lambda^2} +x_2^2\right)^{1/2}.
$$
We consider $z^0=(s^*,x_1^*,1)\in \widetilde{J}_1$, 
$$
V:=\left(s^*-\delta,s^*+\delta\right)\times\left(x_1^*-\delta,x_1^*+\delta\right)\times \left(1-\delta,1\right],
$$
with $\delta\in (0,1)$ small enough such that 
$$
\left(s^*-\delta,s^*+\delta\right)\subset \left(s_0- \frac{S_{0}}{9},s_0+ \frac{S_{0}}{9}\right)
$$
(see \eqref{Zhat}). We also define $z^*=(s^*,x_1^*,0)$.
Then we define
$$
\psi(z):=\left| z-z^*\right|_{\lambda}, \quad \varphi = e^{-\lambda \psi}.
$$
\begin{Lemma}
There exists $\lambda_0>0$ such that for any $\lambda\geq \lambda_0$,
the weight function $\varphi$ satisfies \eqref{ellipticitehormanderintro} and \eqref{weightfunctionpropertiesintro} on $V$ 
for some $\tau_{0} = \tau_0(\lambda)$.
\end{Lemma}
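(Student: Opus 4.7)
The plan is to verify the two conditions on $\varphi = e^{-\lambda \psi}$ directly, following the classical Hörmander--Lebeau--Robbiano exponential weight construction adapted to the anisotropic norm $|\cdot|_\lambda$. First, since $z^* = (s^*, x_1^*, 0) \notin \overline V$ (the $x_2$-coordinates are disjoint: $x_2 \in (1-\delta, 1]$ on $V$), we have $\psi(z) \geq x_2 \geq 1-\delta > 0$ on $\overline V$, so $\psi$ is smooth there. An explicit computation gives
$$
\nabla_z \psi = \Bigl(\tfrac{s-s^*}{\lambda^2\psi},\; \tfrac{x_1-x_1^*}{\lambda^2\psi},\; \tfrac{x_2}{\psi}\Bigr),
$$
and since $\sup_{\overline V}\psi \leq \sqrt{1+2\delta^2/\lambda^2}$, the normal component $|\partial_{x_2}\psi| = x_2/\psi \geq (1-\delta)/\sqrt{1+2\delta^2/\lambda^2}$ is bounded below uniformly in $\lambda \geq \lambda_0$, ensuring $\nabla_z \varphi = -\lambda \varphi \nabla_z \psi \neq 0$ on $\overline V$.

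For the Ventcel anisotropy condition \eqref{weightfunctionpropertiesintro}, the construction of $|\cdot|_\lambda$ forces $|\nabla_{s,x_1}\psi| \leq \sqrt{2}\delta/(\lambda^2 \psi)$, while $\sup_{\overline V} \psi - \inf_{\overline V \cap \widetilde J_1}\psi = \sqrt{1+2\delta^2/\lambda^2} - 1 = O(\delta^2/\lambda^2)$. Writing $\nabla\varphi = -\lambda\varphi\nabla\psi$ and tracking the exponential $\exp\bigl(\lambda(\sup_{\overline V}\psi - \inf_{\overline V\cap \widetilde J_1}\psi)\bigr)$ that controls $\sup\varphi_{|J_1}/\inf\varphi_{|V}$, I would obtain
$$
\frac{\sup_{\overline V \cap \widetilde J_1}\,|\nabla_{s,x_1}\varphi|}{\inf_{\overline V}\,|\partial_{x_2}\varphi|} \lesssim \frac{\delta}{\lambda^2}\,\exp\bigl(\lambda(\sqrt{1+2\delta^2/\lambda^2}-1)\bigr) = O\!\left(\tfrac{\delta}{\lambda^2}\right),
$$
which is smaller than any prescribed $\nu_0$ by choosing $\lambda_0$ large enough.

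The sub-ellipticity condition \eqref{ellipticitehormanderintro} is the heart of the argument. A direct calculation gives
$$
\tfrac{1}{2i}\{\overline{p_{\varphi,\sigma}},\,p_{\varphi,\sigma}\} = 4\tau\,\nabla_z^2\varphi(\xi,\xi) + 4\tau^3\,\nabla_z^2\varphi(\nabla_z\varphi,\nabla_z\varphi).
$$
Substituting $\nabla_z^2\varphi = \lambda^2\varphi\,\nabla\psi(\nabla\psi)^T - \lambda\varphi\,\nabla^2\psi$ and using the constraint $\xi\cdot\nabla\psi = 0$ coming from $p_{\varphi,\sigma}=0$, this bracket reduces to
$$
-4\tau\lambda\varphi\,\nabla^2\psi(\xi,\xi) \;+\; 4\tau^3\lambda^3\varphi^3\bigl(\lambda|\nabla\psi|^4 - \nabla^2\psi(\nabla\psi,\nabla\psi)\bigr).
$$
The dominant positive term $4\tau^3\lambda^4\varphi^3|\nabla\psi|^4$ is bounded below by $c_1\tau^3\lambda^4\varphi^3$; it absorbs $4\tau^3\lambda^3\varphi^3\nabla^2\psi(\nabla\psi,\nabla\psi) = O(\tau^3\lambda^3\varphi^3)$ once $\lambda$ is larger than a threshold depending only on $\|\nabla^2\psi\|_\infty$ and $\inf|\nabla\psi|$. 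To handle the $\xi$-term, I would use $|\xi|^2 = \tau^2\lambda^2\varphi^2|\nabla\psi|^2 + \sigma^2 \leq \tau^2(\lambda^2\varphi^2|\nabla\psi|^2 + \tau_0^{-2})$ from $|\sigma|\leq \tau/\tau_0$, giving $|4\tau\lambda\varphi\,\nabla^2\psi(\xi,\xi)| = O(\tau^3\lambda^3\varphi^3) + O(\tau^3\lambda\varphi/\tau_0^2)$; the first part is absorbed as before, and the second requires $\tau_0\,\lambda\varphi|\nabla\psi|\gtrsim 1$, which fixes $\tau_0 = \tau_0(\lambda)$.

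The main obstacle is precisely this coupling: because $\varphi = e^{-\lambda\psi}$ is exponentially small on $\overline V$, the required $\tau_0(\lambda)$ must grow at least like $e^{\lambda(1-\delta)}/\lambda$. Since the statement allows $\tau_0$ to depend on $\lambda$, this is admissible, but the bookkeeping must carefully separate the ``$\lambda$ large first'' step (which secures the positive sign of the Hörmander bracket and handles all $\nabla^2\psi$-contributions) from the ``$\tau_0$ large next'' step (which tames the $\sigma^2$ contribution in $|\xi|^2$).
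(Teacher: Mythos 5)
Your proposal follows the paper's proof essentially step by step: the anisotropy condition \eqref{weightfunctionpropertiesintro} is checked by the same explicit sup/inf bounds on $\psi$ and $\varphi$, and the sub-ellipticity \eqref{ellipticitehormanderintro} is obtained from the identity $\tfrac{1}{2i}\{\overline{p_{\varphi,\sigma}},p_{\varphi,\sigma}\} = 4\tau\nabla_z^2\varphi(\xi,\xi)+4\tau^3\nabla_z^2\varphi(\nabla_z\varphi,\nabla_z\varphi)$, the substitution $\varphi=e^{-\lambda\psi}$, the uniform-in-$\lambda$ bounds on $|\nabla_z\psi|$ and $|\nabla_z^2\psi|$, and the two-step absorption (``$\lambda$ large'' for the $\nabla_z^2\psi$ terms, then ``$\tau_0$ large'' for the $\sigma^2$ term). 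The only minor deviation is that you exploit the imaginary part of $p_{\varphi,\sigma}=0$ (giving $\xi\cdot\nabla_z\psi=0$) to kill the $\lambda^2\varphi(\nabla_z\psi\cdot\xi)^2$ contribution, whereas the paper simply discards it as nonnegative; both choices are valid and yield the same conclusion.
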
 
\begin{proof}
We assume that $\lambda\geq 1$ in all what follows.
First, since $\nabla_z \psi\neq 0$ in $\overline V$, we deduce the first point of \eqref{weightfunctionpropertiesintro}.
For the second point of \eqref{weightfunctionpropertiesintro}, we first notice that
\begin{equation}\label{23:37}
\inf_{\overline V\cap\widetilde J_{1}} \psi=1, \quad 
\sup_{\overline V} \psi=\left(\frac{2 \delta^2}{\lambda^2} +1\right)^{1/2} \leq 2.
\end{equation}
We thus deduce
$$
\sup_{\overline V\cap\widetilde J_{1}} \left|\nabla_{s,x_{1}}\varphi \right|
\leq
\sup_{\overline V\cap\widetilde J_{1}} \varphi =e^{-\lambda}
\quad
\text{and}
\quad 
\inf_{\overline V} \left|\partial_{x_2} \varphi\right|\geq \lambda \frac{1-\delta}{2} e^{-\left(2 \delta^2+\lambda ^2\right)^{1/2}}.
$$
Consequently there exists $\lambda_1$ such that the second point of \eqref{weightfunctionpropertiesintro} holds for $\lambda \geq \lambda_1$.

For \eqref{ellipticitehormanderintro}, we compute the Poisson bracket:
\begin{align*}
\frac{1}{8\tau i}\{\overline{p_{\varphi,\sigma}},p_{\varphi,\sigma}\}
& 
=\tau^{2} \left(\nabla_z^2 \varphi\right)\nabla_{z}\varphi \cdot \nabla_{z}\varphi+\left(\nabla_z^2 \varphi\right)\xi \cdot \xi
\\
& = \tau^{2} \varphi^{3}\left(  \lambda^{4} \left| \nabla_{z}\psi \right|^{4}-\lambda^{3} \left(\nabla_z^2 \psi\right) (\nabla_{z}\psi)\cdot (\nabla_{z}\psi) \right)
+\varphi\left( \lambda^{2}(\nabla_{z}\psi\cdot\xi)^{2}-\lambda \left(\nabla_z^2 \psi\right)\xi \cdot\xi\right)\\
& \geq 
\tau^{2} \lambda^{4}\varphi^{3}\left|\nabla_{z}\psi \right|^{4}
	-\tau^{2} \lambda^{3} \varphi^{3} \left|\nabla_z^2 \psi\right| \left| \nabla_{z}\psi \right|^{2} 
	- \lambda \varphi \left|\nabla_z^2 \psi\right| |\xi|^{2}.
\end{align*}
Now, if $p_{\varphi,\sigma}(z,\xi,\tau)=0$, then $|\xi|^{2} =\tau^{2}\lambda^2 \varphi^2 |\nabla_{z}\psi (z)|^{2}+\sigma^{2}$ so that
$$
\frac{1}{8\tau i}\{\overline{p_{\varphi,\sigma}},p_{\varphi,\sigma}\}
\geq 
\tau^{2} \lambda^{4}\varphi^{3}\left|\nabla_{z}\psi \right|^{4}
	-2\tau^{2} \lambda^{3} \varphi^{3} \left|\nabla_z^2 \psi\right| \left| \nabla_{z}\psi \right|^{2} 
	- \lambda \varphi \left|\nabla_z^2 \psi\right| \sigma^{2}.
$$
From \eqref{23:37}, there exist positive constants independent of $\lambda$ such that
$$
C_1\leq \left|\nabla_{z}\psi \right| \leq C_2, \quad \left|\nabla_z^2 \psi\right|\leq C_3.
$$
In particular there exist $C>0$ and $\lambda_0\geq \lambda_1$, such that for $\lambda\geq \lambda_0$,
$$
\frac{1}{8\tau i}\{\overline{p_{\varphi,\sigma}},p_{\varphi,\sigma}\}
\geq 
C \tau^{2} \lambda^{4}\varphi^{3}- \lambda \varphi \left|\nabla_z^2 \psi\right| \sigma^{2}
$$
and there exists $\tau_0=\tau_0(\lambda)$ such that for $\tau\geq \tau_0 |\sigma|$, 
$$
\frac{1}{8\tau i}\{\overline{p_{\varphi,\sigma}},p_{\varphi,\sigma}\}\geq 
\frac{C}{2} \tau^{2} \lambda^{4}\varphi^{3}>0.
$$
\end{proof}
From now on, the value of $\lambda$ shall be kept fixed.
We define, for $\beta>0$,
\[
\widehat Z_{\beta} := \{z\in \widehat Z \ ; \ \dist(z,\widehat J_{1})>\beta\}.
\]

\begin{Lemma}\label{boundarylemma}
Assume $z^0\in \widetilde{J}_1$. There exist an open neighborhood $\widetilde{V}$ of $z^0$ in $\overline{Z}$, $\mu,\beta \in (0,1)$ and $C>0$ such that
for any $v\in H^{2}(Z)$ with $v_{|_{J_{1}}}\in H^{2}(J_{1})$,
\begin{multline}
 \left\| v \right\|_{H^1(\widetilde{V})}
+\left|  v_{|J_{1}} \right|_{H^{1}\left(\widetilde V\cap\widetilde J_{1}\right)}
\leq C
\left( \left\| v \right\|_{H^{1}\left(\widehat Z\right)}
+\left| v_{|J_{1}}\right|_{H^{1}\left(\widehat J_{1}\right)}
\right)^{1-\mu}
\\
\times
\left(
\left\| \Delta_{z} v \right\|_{L^{2}(\widehat{Z})}
+\left| \partial_{x_{2}}v_{|J_{1}}- \Delta_{s,x_1} v_{|J_{1}} \right|_{L^{2}\left(\widehat J_{1}\right)}
+ \left\| v \right\|_{H^{1}\left(\widehat{Z}_\beta\right)}
\right)^{\mu}.
\end{multline}
\end{Lemma}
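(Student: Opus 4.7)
The plan is to follow the classical Lebeau-Robbiano interpolation strategy: apply the Carleman estimate \eqref{Carleman} to $w := \chi v$ for a suitable smooth cutoff $\chi$ localized near $z^0$, and then optimize the resulting two-term inequality over the Carleman parameter $\tau$.

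The first step is to pick three constants $1 < c_2 < c_2' < c_1$, close enough to $1$ that the sublevel set $\{\psi \leq c_1\}$ of the weight function $\psi(z) = |z-z^*|_\lambda$ is compactly contained in $V$, relative to the topology of $\overline Z$ (introducing, if necessary, an auxiliary cutoff in $x_2$ in order to keep this support away from the interior portion of $\partial V$). Then $\widetilde V := \{z \in V : \psi(z) < c_2\}$ is a neighborhood of $z^0$ in $\overline Z$, since $\psi(z^0) = 1 < c_2$. I take $\chi \in C^\infty(\overline Z)$ with $\chi \equiv 1$ on $\{\psi \leq c_2'\}$ and $\chi \equiv 0$ outside $\{\psi \leq c_1\}$, so that $w = \chi v$ is admissible for \eqref{Carleman} and $\supp \nabla \chi \subset \{c_2' \leq \psi \leq c_1\}$.

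Applying \eqref{Carleman} to $w = \chi v$, the product rule
\[
-\Delta_{z}(\chi v) = -\chi\,\Delta_{z} v - 2\nabla_{z}\chi \cdot \nabla_{z} v - v\,\Delta_{z}\chi,
\]
and the analogous expansion for the Ventcel operator on $J_1$, produce the desired source contributions together with commutator terms supported on $\supp \nabla \chi$. Restricting the LHS of \eqref{Carleman} to $\widetilde V$ (where $\chi \equiv 1$) bounds it from below by $e^{2\tau \varphi_2}$, with $\varphi_2 := e^{-\lambda c_2}$, times the quantity
\[
X^2 := \|v\|_{H^1(\widetilde V)}^2 + |v_{|J_{1}}|_{H^1(\widetilde V \cap \widetilde J_1)}^2
\]
to be estimated. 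On the RHS, the source contributions are controlled above by $e^{2\tau \varphi_{\max}}$, with $\varphi_{\max} := \sup_V \varphi$, times $A_0^2 := \|\Delta_z v\|_{L^2(\widehat Z)}^2 + |\partial_{x_2} v_{|J_{1}} - \Delta_{s,x_1} v_{|J_{1}}|_{L^2(\widehat J_1)}^2$, while the commutator contributions are controlled above by $e^{2\tau \varphi_1}$, with $\varphi_1 := e^{-\lambda c_2'} < \varphi_2$, times $B_0^2 := \|v\|_{H^1(\widehat Z)}^2 + |v_{|J_{1}}|_{H^1(\widehat J_1)}^2$. Dividing through by $e^{2\tau \varphi_2}$ yields the two-term inequality
\[
X^2 \leq C\, e^{C_1 \tau}\, A_0^2 + C\, e^{-c_1 \tau}\, B_0^2,
\]
valid for all $\tau \geq \tau_0$, with strictly positive constants $c_1, C_1$.

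Finally, an optimization over $\tau \geq \tau_0$ along the lines of \cite[Lemma 8.4]{Buffe17} produces $X \leq C\, A_0^\mu\, B_0^{1-\mu}$ for some $\mu \in (0,1)$; the conclusion of \cref{boundarylemma} then follows at once from the trivial bound $A_0 \leq A_0 + \|v\|_{H^1(\widehat Z_\beta)}$. The main technical obstacle in carrying out this plan will lie in the commutator terms on $J_1$ produced by the product rule for the Ventcel operator $\partial_{x_2} - \Delta_{s,x_1}$: these involve tangential derivatives of $v$ on the boundary, and absorbing them inside the Carleman framework crucially relies on the presence of the boundary term $\tau |e^{\tau \varphi}(\partial_{x_2} w_{|J_{1}} - \Delta_{s,x_1} w_{|J_{1}})|_{L^2(V \cap \widetilde J_1)}^2$ on the RHS of \eqref{Carleman}, together with the Lopatinski-Shapiro-type weight condition \eqref{weightfunctionpropertiesintro} underlying that estimate.
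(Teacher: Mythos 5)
Your plan is the right one in spirit --- localize, apply \eqref{Carleman}, split into ``large weight'' and ``small weight'' terms, then optimize --- and this is indeed what the paper does. But there is a genuine gap in the bookkeeping of where the commutator terms land, and the gap is precisely why the term $\|v\|_{H^{1}(\widehat Z_\beta)}$ in the statement appears inside the $\mu$-power factor and is not a trivial add-on.

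The sublevel set $\{\psi\leq c_{1}\}$ with $c_{1}>1$ contains the entire segment $\{(s^{*},x_{1}^{*},x_{2}): 0\leq x_{2}\leq 1\}$ --- recall $z^{*}$ has $x_{2}=0$, so $\psi$ decreases as you move straight down from $z^{0}$. Hence this set is never compactly contained in $V$, and the ``auxiliary cutoff in $x_{2}$'' you mention in passing is mandatory. Call its transition layer $\{1-r_{1}\leq x_{2}\leq 1-r_{1}/2\}$. On that layer $\psi$ is \emph{close to} $1-r_{1}<1$, whereas on $\widetilde V$ (a small neighborhood of $z^{0}$) one has $\psi$ close to $1$. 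Since $\varphi=e^{-\lambda\psi}$ is \emph{decreasing} in $\psi$, the weight on the $x_{2}$-transition layer is \emph{strictly larger} than on $\widetilde V$, not smaller. So after dividing by $e^{2\tau\varphi_{2}}$ with $\varphi_{2}:=\inf_{\widetilde V}\varphi$, the commutator coming from the $x_{2}$-cutoff produces a factor $e^{(C_{3}-\varphi_{2})\tau}$ with $C_{3}-\varphi_{2}>0$. It cannot be absorbed into the $e^{-c\tau}B_{0}^{2}$ group and is not contained in your $A_{0}^{2}$. Consequently the two-term inequality you state, $X^{2}\leq Ce^{C_{1}\tau}A_{0}^{2}+Ce^{-c_{1}\tau}B_{0}^{2}$, is simply not what the localization gives; the correct inequality has a third term $Ce^{C_{1}\tau}\|v\|_{H^{1}(\text{layer near }x_{2}=1-r_{1})}^{2}$, and it is this term that is responsible for $\|v\|_{H^{1}(\widehat Z_{\beta})}$ appearing alongside $A_{0}$ in the interpolation inequality. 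The phrase ``follows at once from the trivial bound $A_{0}\leq A_{0}+\|v\|_{H^{1}(\widehat Z_{\beta})}$'' is therefore a red herring: you cannot establish $X\leq CA_{0}^{\mu}B_{0}^{1-\mu}$ without it in the first place, because that would amount to an unconditional quantitative unique continuation from $\widehat J_{1}$ that your cutoff argument does not produce.

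What the paper does differently is precisely to use two explicit cutoffs $\chi_{0}$ (in $x_{2}$) and $\chi_{1}$ (in $|z-z^{*}|_{\lambda}$) and to track three weight levels $C_{1}<C_{2}<C_{3}$ (with $C_{1}=e^{-\lambda r_{2}}$ from the $\chi_{1}$-commutator, $C_{2}=\inf_{\widetilde V}\varphi$, $C_{3}=e^{-\lambda(1-r_{1})}$ from the $\chi_{0}$-commutator and the bulk source). Then the $\chi_{0}$-commutator, carrying the weight $C_{3}>C_{2}$, is grouped with the source $A_{0}$, while the $\chi_{1}$-commutator, carrying weight $C_{1}<C_{2}$, is grouped with $B_{0}$. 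Fix this grouping in your argument and the optimization over $\tau$ becomes correct. The rest of your outline --- admissibility of $w=\chi v$, expansion of the Ventcel operator by the product rule, reliance on the boundary observability term in \eqref{Carleman} and the condition \eqref{weightfunctionpropertiesintro}, density to pass from smooth $v$ to $H^{2}$ --- matches the paper's proof.
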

\begin{proof}
Standard computation shows the existence of $r_3>1$ such that
$$
\left| z-z^*\right|_{\lambda}=r_3 \quad z=(s,x_1,1)\in J_1 \implies (s,x_1)\in 
	\left(s^*-\frac{\delta}{2}, s^*+\frac{\delta}{2}\right)\times \left(x_1^*-\frac{\delta}{2}, x_1^*+\frac{\delta}{2}\right).
$$
We consider $r_2\in (1,r_3)$. We can also check the existence of $r_1\in (0,\delta)$ such that
$$
\{z=(s,x_1,x_2)\in Z \ ; \ 1-r_{1}\leq x_{2}\leq 1, \quad \left| z-z^*\right|_{\lambda} \leq r_3\}\subset V.
$$
We consider two cut-off functions $\chi_{0},\chi_{1}\in C^{\infty}(\mathbb{R}\times \mathcal{I}\times \mathbb{R})$
such that
\begin{equation*}
\chi_{0}(z)=
\left\{\begin{array}{lll}
	 0&\text{if}&0<x_{2}<1-r_{1}\\
	 1&\text{if}&1-r_{1}/2<x_{2}<1,
\end{array}\right.
\quad
\chi_{1}(z)=
\left\{\begin{array}{lll}
	 0&\text{if}& \left| z-z^*\right|_{\lambda}>r_3\\
	 1&\text{if}& \left| z-z^*\right|_{\lambda}\leq r_2.
\end{array}\right.
\end{equation*}

Let us consider $v\in C^{\infty}(Z)$ and let us apply the Carleman estimate \eqref{Carleman}
to $w=\chi_{0}\chi_{1}v\in C^{\infty}_0(V)$.
In the right-hand side of this estimate, we have
$$
\Delta_{z} \left(\chi_{0}\chi_{1}v\right)
=\left(\chi_{0}\chi_{1}\right) \Delta_{z} v + 2\nabla_z \left(\chi_{0}\chi_{1}\right) \cdot \nabla_z v+ v \Delta_z \left(\chi_{0}\chi_{1}\right) 
$$
Note that in $\supp \chi_{0}\chi_{1}$, $\varphi \leq C_3:=e^{-\lambda (1-r_1)}$. The two last terms in the above relation are included in
$$
V\cap \left(\supp \nabla \chi_0\right)\subset V\cap \left\{x_2\in[1-r_1,1-r_1/2]\right\}
$$
and on this set, $\varphi\leq C_3$ or in 
$$
V\cap \left(\supp \nabla \chi_1\right)\subset V\cap \left\{r_2\leq \left| z-z^*\right|_{\lambda}\leq r_3\right\}
$$
and on this set, $\varphi\leq C_1:=e^{-\lambda r_2}$.
Therefore, 
\begin{equation}\label{ns9.0}
\left\| e^{\tau\varphi}\Delta_{z} w \right\|_{L^{2}(V)}
\lesssim 
e^{C_3 \tau} \left\| \Delta_{z} v \right\|_{L^{2}(V)}
+e^{C_3 \tau} \left\| v \right\|_{H^{1}\left(V\cap \left\{x_2\in[1-r_1,1-r_1/2]\right\}\right)}
+e^{C_1 \tau} \left\| v \right\|_{H^{1}\left(V\right)}
\end{equation}
and similarly,
\begin{equation}\label{ns9.1}
\left| e^{\tau\varphi} \left(\partial_{x_{2}}w_{|J_{1}}- \Delta_{s,x_1} w_{|J_{1}}\right) \right|_{L^{2}\left(V\cap\widetilde J_{1}\right)}
\lesssim 
e^{C_3 \tau} 
\left| \partial_{x_{2}}v_{|J_{1}}- \Delta_{s,x_1} v_{|J_{1}} \right|_{L^{2}\left(V\cap\widetilde J_{1}\right)}
+e^{C_1 \tau} 
\left| v_{|J_{1}}\right|_{H^{1}\left(V\cap\widetilde J_{1}\right)}.
\end{equation}
There exists $r_4>0$ such that 
$$
\left\{z\in \mathbb{R}\times \mathcal{I}\times \mathbb{R} \ ; \ \left|z-z^0\right|\leq r_4 \right\}
\subset 
\left\{
z=(s,x_1,x_2)\in \mathbb{R}\times \mathcal{I}\times \mathbb{R} \ ; \ 1-\frac{r_{1}}{2} < x_{2}, \quad \left| z-z^*\right|_{\lambda} < r_2
\right\}.
$$
Then on the set
$$
\widetilde{V}:=\left\{z\in Z \ ; \ \left|z-z^0\right|<r_4\right\} \subset V
$$
we have $\chi_0\chi_1=1$ and $\varphi\geq C_2:=e^{-\lambda \sup_{\widetilde{V}} \psi}$, with $C_2\in (C_1,C_3)$.
Combining this with \eqref{Carleman}, \eqref{ns9.0} and \eqref{ns9.1}, we deduce
that for all $\tau\geq \tau_0$,
\begin{multline}
 \left\| v \right\|_{H^1(\widetilde{V})}
+\left|  v_{|J_{1}} \right|_{H^{1}\left(\widetilde V\cap\widetilde J_{1}\right)}
\\
\lesssim
e^{\left(C_3-C_2\right) \tau} 
\left(
\left\| \Delta_{z} v \right\|_{L^{2}(V)}
+\left| \partial_{x_{2}}v_{|J_{1}}- \Delta_{s,x_1} v_{|J_{1}} \right|_{L^{2}\left(V\cap\widetilde J_{1}\right)}
+ \left\| v \right\|_{H^{1}\left(V\cap \left\{x_2\in[1-r_1,1-r_1/2]\right\}\right)}
\right)
\\
+e^{-\left(C_2-C_1\right) \tau}\left( \left\| v \right\|_{H^{1}\left(V\right)}
+\left| v_{|J_{1}}\right|_{H^{1}\left(V\cap\widetilde J_{1}\right)}
\right).
\end{multline}
Optimizing this inequality with respect to $\tau$ yields the interpolation inequality for $v$ smooth. A density argument permits to conclude the proof of 
\cref{boundarylemma}.
\end{proof}

\begin{proof}[Proof of Theorem \ref{Carlemansurv2}]
By a compactness argument, one can deduce from Lemma \ref{boundarylemma} an interpolation result on a neighborhood of $\widetilde J_1$. Then we combine this
with classical interpolation estimates (see \cite{LebeauRobbiano})
 in the interior and at the boundary $J_{0}$ where Dirichlet boundary condition hold to conclude. A similar proof is done in \cite{Buffe17} (see Lemma 8.3).
\end{proof}.


\bibliographystyle{plain}
\bibliography{references}

\begin{thebibliography}{10}

\bibitem{wave}
Mehdi Badra and Tak{\'e}o Takahashi.
\newblock {Analyticity of the semigroup associated with a Stokes-wave
  interaction system and application to the system of interaction between a
  viscous incompressible fluid and an elastic structure}.
\newblock Submitted, https://hal.archives-ouvertes.fr/hal-03323092.

\bibitem{MR3261920}
Mehdi Badra and Tak\'{e}o Takahashi.
\newblock Feedback stabilization of a fluid-rigid body interaction system.
\newblock {\em Adv. Differential Equations}, 19(11-12):1137--1184, 2014.

\bibitem{MR3181675}
Mehdi Badra and Tak\'{e}o Takahashi.
\newblock Feedback stabilization of a simplified 1d fluid-particle system.
\newblock {\em Ann. Inst. H. Poincar\'{e} Anal. Non Lin\'{e}aire},
  31(2):369--389, 2014.

\bibitem{MR3619065}
Mehdi Badra and Tak\'{e}o Takahashi.
\newblock Feedback boundary stabilization of 2{D} fluid-structure interaction
  systems.
\newblock {\em Discrete Contin. Dyn. Syst.}, 37(5):2315--2373, 2017.

\bibitem{plat}
Mehdi Badra and Tak\'{e}o Takahashi.
\newblock Gevrey regularity for a system coupling the {N}avier-{S}tokes system
  with a beam equation.
\newblock {\em SIAM J. Math. Anal.}, 51(6):4776--4814, 2019.

\bibitem{nonplat}
Mehdi Badra and Takéo Takahashi.
\newblock Gevrey regularity for a system coupling the navier-stokes system with
  a beam: the non-flat case.
\newblock {\em Funkcialaj Ekvacioj}, to appear.
\newblock https://hal.archives-ouvertes.fr/hal-02303258/.

\bibitem{Bei2004a}
Hugo Beir{\~a}o~da Veiga.
\newblock On the existence of strong solutions to a coupled fluid-structure
  evolution problem.
\newblock {\em J. Math. Fluid Mech.}, 6(1):21--52, 2004.

\bibitem{MR3085093}
Muriel Boulakia and Sergio Guerrero.
\newblock Local null controllability of a fluid-solid interaction problem in
  dimension 3.
\newblock {\em J. Eur. Math. Soc. (JEMS)}, 15(3):825--856, 2013.

\bibitem{MR2375750}
Muriel Boulakia and Axel Osses.
\newblock Local null controllability of a two-dimensional fluid-structure
  interaction problem.
\newblock {\em ESAIM Control Optim. Calc. Var.}, 14(1):1--42, 2008.

\bibitem{BoyerFabrie}
Franck Boyer and Pierre Fabrie.
\newblock {\em Mathematical tools for the study of the incompressible
  {N}avier-{S}tokes equations and related models}, volume 183 of {\em Applied
  Mathematical Sciences}.
\newblock Springer, New York, 2013.

\bibitem{Buffe17}
R\'{e}mi Buffe.
\newblock Stabilization of the wave equation with {V}entcel boundary condition.
\newblock {\em J. Math. Pures Appl. (9)}, 108(2):207--259, 2017.

\bibitem{BuffeGagnon}
R{\'e}mi Buffe and Ludovick Gagnon.
\newblock {Spectral inequality for an Oseen operator in a two dimensional
  channel}.
\newblock working paper or preprint, April 2021.

\bibitem{ChaDesEst2005a}
Antonin Chambolle, Beno{\^{\i}}t Desjardins, Maria~J. Esteban, and C{\'e}line
  Grandmont.
\newblock Existence of weak solutions for the unsteady interaction of a viscous
  fluid with an elastic plate.
\newblock {\em J. Math. Fluid Mech.}, 7(3):368--404, 2005.

\bibitem{ChavesLebeau}
Felipe~W. Chaves-Silva and Gilles Lebeau.
\newblock Spectral inequality and optimal cost of controllability for the
  {S}tokes system.
\newblock {\em ESAIM Control Optim. Calc. Var.}, 22(4):1137--1162, 2016.

\bibitem{CheTri1989a}
Shu~Ping Chen and Roberto Triggiani.
\newblock Proof of extensions of two conjectures on structural damping for
  elastic systems.
\newblock {\em Pacific J. Math.}, 136(1):15--55, 1989.

\bibitem{MR3365831}
Nicolae C\^{\i}ndea, Sorin Micu, Ionel Roven\c{t}a, and Marius Tucsnak.
\newblock Particle supported control of a fluid-particle system.
\newblock {\em J. Math. Pures Appl. (9)}, 104(2):311--353, 2015.

\bibitem{djebour:hal-02454567}
Imene~A Djebour.
\newblock {Local null controllability of a fluid-rigid body interaction problem
  with Navier slip boundary conditions}.
\newblock {\em {ESAIM: Control, Optimisation and Calculus of Variations}},
  27(76):46, 2021.

\bibitem{MR2139944}
Anna Doubova and Enrique Fern\'{a}ndez-Cara.
\newblock Some control results for simplified one-dimensional models of
  fluid-solid interaction.
\newblock {\em Math. Models Methods Appl. Sci.}, 15(5):783--824, 2005.

\bibitem{fernandez2004local}
Enrique Fern\'{a}ndez-Cara, Sergio Guerrero, Oleg~Yu Imanuvilov, and
  Jean-Pierre Puel.
\newblock Local exact controllability of the {N}avier-{S}tokes system.
\newblock {\em J. Math. Pures Appl. (9)}, 83(12):1501--1542, 2004.

\bibitem{Gra2008a}
C{\'e}line Grandmont.
\newblock Existence of weak solutions for the unsteady interaction of a viscous
  fluid with an elastic plate.
\newblock {\em SIAM J. Math. Anal.}, 40(2):716--737, 2008.

\bibitem{MR3466847}
C\'eline Grandmont and Matthieu Hillairet.
\newblock Existence of global strong solutions to a beam-fluid interaction
  system.
\newblock {\em Arch. Ration. Mech. Anal.}, 220(3):1283--1333, 2016.

\bibitem{grandmont:hal-01567661}
C\'{e}line Grandmont, Matthieu Hillairet, and Julien Lequeurre.
\newblock Existence of local strong solutions to fluid-beam and fluid-rod
  interaction systems.
\newblock {\em Ann. Inst. H. Poincar\'{e} Anal. Non Lin\'{e}aire},
  36(4):1105--1149, 2019.

\bibitem{MR2317341}
Oleg Imanuvilov and Tak\'{e}o Takahashi.
\newblock Exact controllability of a fluid-rigid body system.
\newblock {\em J. Math. Pures Appl. (9)}, 87(4):408--437, 2007.

\bibitem{imanuvilov1998}
Oleg~Yu Imanuvilov.
\newblock On exact controllability for the {N}avier-{S}tokes equations.
\newblock {\em ESAIM Control Optim. Calc. Var.}, 3:97--131, 1998.

\bibitem{JerisonLebeau}
David Jerison and Gilles Lebeau.
\newblock Nodal sets of sums of eigenfunctions.
\newblock In {\em Harmonic analysis and partial differential equations
  ({C}hicago, {IL}, 1996)}, Chicago Lectures in Math., pages 223--239. Univ.
  Chicago Press, Chicago, IL, 1999.

\bibitem{Leautaud}
Matthieu L\'{e}autaud.
\newblock Spectral inequalities for non-selfadjoint elliptic operators and
  application to the null-controllability of parabolic systems.
\newblock {\em J. Funct. Anal.}, 258(8):2739--2778, 2010.

\bibitem{LebeauRobbiano}
Gilles Lebeau and Luc Robbiano.
\newblock Contr\^{o}le exact de l'\'{e}quation de la chaleur.
\newblock {\em Comm. Partial Differential Equations}, 20(1-2):335--356, 1995.

\bibitem{Leng14}
Daniel Lengeler.
\newblock Weak solutions for an incompressible, generalized {N}ewtonian fluid
  interacting with a linearly elastic {K}oiter type shell.
\newblock {\em SIAM J. Math. Anal.}, 46(4):2614--2649, 2014.

\bibitem{LR14}
Daniel Lengeler and Michael R{\r u}\v{z}i\v{c}ka.
\newblock Weak solutions for an incompressible {N}ewtonian fluid interacting
  with a {K}oiter type shell.
\newblock {\em Arch. Ration. Mech. Anal.}, 211(1):205--255, 2014.

\bibitem{Leq2011a}
Julien Lequeurre.
\newblock Existence of strong solutions to a fluid-structure system.
\newblock {\em SIAM J. Math. Anal.}, 43(1):389--410, 2011.

\bibitem{MR3023058}
Yuning Liu, Tak\'{e}o Takahashi, and Marius Tucsnak.
\newblock Single input controllability of a simplified fluid-structure
  interaction model.
\newblock {\em ESAIM Control Optim. Calc. Var.}, 19(1):20--42, 2013.

\bibitem{MR4253566}
Debayan Maity, Arnab Roy, and Tak\'{e}o Takahashi.
\newblock Existence of strong solutions for a system of interaction between a
  compressible viscous fluid and a wave equation.
\newblock {\em Nonlinearity}, 34(4):2659--2687, 2021.

\bibitem{Debayan}
Debayan Maity and Tak{\'e}o Takahashi.
\newblock {Lp theory for the interaction between the incompressible
  Navier-Stokes system and a damped plate}.
\newblock Submitted, https://hal.archives-ouvertes.fr/hal-02294097v2.

\bibitem{MR4183914}
Debayan Maity and Tak\'{e}o Takahashi.
\newblock Existence and uniqueness of strong solutions for the system of
  interaction between a compressible {N}avier-{S}tokes-{F}ourier fluid and a
  damped plate equation.
\newblock {\em Nonlinear Anal. Real World Appl.}, 59:Paper No. 103267, 34,
  2021.

\bibitem{Miller}
Luc Miller.
\newblock A direct {L}ebeau-{R}obbiano strategy for the observability of
  heat-like semigroups.
\newblock {\em Discrete Contin. Dyn. Syst. Ser. B}, 14(4):1465--1485, 2010.

\bibitem{MR4222171}
Sourav Mitra.
\newblock Observability and unique continuation of the adjoint of a linearized
  simplified compressible fluid-structure model in a 2{D} channel.
\newblock {\em ESAIM Control Optim. Calc. Var.}, 27(suppl.):Paper No. S18, 51,
  2021.

\bibitem{MuhaCanic}
Boris Muha and Suncica Cani\'{c}.
\newblock Existence of a weak solution to a nonlinear fluid-structure
  interaction problem modeling the flow of an incompressible, viscous fluid in
  a cylinder with deformable walls.
\newblock {\em Arch. Ration. Mech. Anal.}, 207(3):919--968, 2013.

\bibitem{MC-arma}
Boris Muha and Suncica Cani\'{c}.
\newblock Existence of a weak solution to a nonlinear fluid-structure
  interaction problem modeling the flow of an incompressible, viscous fluid in
  a cylinder with deformable walls.
\newblock {\em Arch. Ration. Mech. Anal.}, 207(3):919--968, 2013.

\bibitem{Boris-C-13}
Boris Muha and Sun\v{c}ica \v{C}ani\'{c}.
\newblock A nonlinear, 3{D} fluid-structure interaction problem driven by the
  time-dependent dynamic pressure data: a constructive existence proof.
\newblock {\em Commun. Inf. Syst.}, 13(3):357--397, 2013.

\bibitem{Boris-C-15}
Boris Muha and Sun\v{c}ica \v{C}ani\'{c}.
\newblock Fluid-structure interaction between an incompressible, viscous 3{D}
  fluid and an elastic shell with nonlinear {K}oiter membrane energy.
\newblock {\em Interfaces Free Bound.}, 17(4):465--495, 2015.

\bibitem{QuaTuvVen2000a}
Alfio Quarteroni, Massimiliano Tuveri, and Alessandro Veneziani.
\newblock Computational vascular fluid dynamics: problems, models and methods.
\newblock {\em Computing and Visualization in Science}, 2(4):163--197, 2000.

\bibitem{MR4101928}
Mythily Ramaswamy, Arnab Roy, and Tak\'{e}o Takahashi.
\newblock Remark on the global null controllability for a viscous
  {B}urgers-particle system with particle supported control.
\newblock {\em Appl. Math. Lett.}, 107:106483, 7, 2020.

\bibitem{RaymondSICON2010}
Jean-Pierre Raymond.
\newblock Feedback stabilization of a fluid-structure model.
\newblock {\em SIAM J. Control Optim.}, 48(8):5398--5443, 2010.

\bibitem{MR2745779}
Jean-Pierre Raymond.
\newblock Feedback stabilization of a fluid-structure model.
\newblock {\em SIAM J. Control Optim.}, 48(8):5398--5443, 2010.

\bibitem{MR4043319}
Arnab Roy and Tak\'{e}o Takahashi.
\newblock Local null controllability of a rigid body moving into a {B}oussinesq
  flow.
\newblock {\em Math. Control Relat. Fields}, 9(4):793--836, 2019.

\bibitem{MR4238201}
Arnab Roy and Tak\'{e}o Takahashi.
\newblock Stabilization of a rigid body moving in a compressible viscous fluid.
\newblock {\em J. Evol. Equ.}, 21(1):167--200, 2021.

\bibitem{Russell}
David~L. Russell.
\newblock A unified boundary controllability theory for hyperbolic and
  parabolic partial differential equations.
\newblock {\em Studies in Appl. Math.}, 52:189--211, 1973.

\bibitem{MR3397328}
Tak\'{e}o Takahashi, Marius Tucsnak, and George Weiss.
\newblock Stabilization of a fluid-rigid body system.
\newblock {\em J. Differential Equations}, 259(11):6459--6493, 2015.

\bibitem{MR4042350}
Sr{\dj}an Trifunovi\'{c} and Ya-Guang Wang.
\newblock Existence of a weak solution to the fluid-structure interaction
  problem in 3{D}.
\newblock {\em J. Differential Equations}, 268(4):1495--1531, 2020.

\bibitem{MR4192391}
Sr{\dj}an Trifunovi\'{c} and Yaguang Wang.
\newblock Weak solution to the incompressible viscous fluid and a thermoelastic
  plate interaction problem in 3{D}.
\newblock {\em Acta Math. Sci. Ser. B (Engl. Ed.)}, 41(1):19--38, 2021.

\bibitem{TucsnakWeiss}
Marius Tucsnak and George Weiss.
\newblock {\em Observation and control for operator semigroups}.
\newblock Birkh\"{a}user Advanced Texts: Basler Lehrb\"{u}cher. [Birkh\"{a}user
  Advanced Texts: Basel Textbooks]. Birkh\"{a}user Verlag, Basel, 2009.

\bibitem{CanicMuhaBukac}
Sun\v{c}ica \v{C}ani\'{c}, Boris Muha, and Martina Buka\v{c}.
\newblock Fluid-structure interaction in hemodynamics: modeling, analysis, and
  numerical simulation.
\newblock In {\em Fluid-structure interaction and biomedical applications},
  Adv. Math. Fluid Mech., pages 79--195. Birkh\"{a}user/Springer, Basel, 2014.

\end{thebibliography}

\end{document}